\newcommand{\R}{\mathbb R}
\newcommand{\N}{\mathbb N}
\newtheorem{theorem}{Theorem}[section]
\newtheorem{proposition}{Proposition}[section]
\newtheorem{definition}{Definition}[section]
\newtheorem{remark}{Remark}[section]
\newtheorem{lemma}{Lemma}[section]
\newcommand{\orho}{\overline{\rho}}
\newcommand{\ds}{\displaystyle}
\renewcommand{\epsilon}{\varepsilon}
\newcommand{\eps}{\epsilon}
\renewcommand{\phi}{\varphi}
\newenvironment{proofof}[1]{\smallskip\noindent\emph{Proof of #1.}%
\hspace{1pt}}{\hspace{-5pt}{\nobreak\quad\nobreak\hfill\nobreak%
$\square$\vspace{8pt}\par}\smallskip\goodbreak}
\newcommand{\D}[1]{{\rm (D${}_{#1}$)}}
\newlength{\captionwidth}
\long\def\@makecaption#1#2{%
   \vskip 10\p@
   \setbox\@tempboxa\hbox{#1: #2}%
   \ifdim \wd\@tempboxa > \captionwidth 
       \hbox to\hsize{\hfil
       \parbox[t]{\captionwidth}{
       \small#1: \small#2\par}
       \hfil}
     \else
       \hbox to\hsize{\hfil\box\@tempboxa\hfil}%
   \fi}
\begin{document}

\title{Semi-wavefront solutions in models of collective movements\\ with density-dependent diffusivity}

\author{Andrea Corli\\
\small\textit{Department of Mathematics and Computer Science, University of Ferrara}\\
\small\textit{I-44121 Italy, e-mail: andrea.corli@unife.it} \\
\and Luisa Malaguti\\
\small\textit{Department of Sciences and Methods for Engineering, University of Modena and Reggio Emilia}\\
\small\textit{I-42122 Italy, e-mail: luisa.malaguti@unimore.it}}

\maketitle
\begin{abstract} This paper deals with a nonhomogeneous scalar parabolic equation with possibly degenerate diffusion term; the process has only one stationary state. The equation can be interpreted as modeling collective movements (crowd dynamics, for instance). We first prove the existence of semi-wavefront solutions for every wave speed; their properties are investigated. Then, a family of travelling wave solutions is constructed by a suitable combination of the previous semi-wavefront solutions. Proofs exploit comparison-type techniques and are carried out in the case of one spatial variable; the extension to the general case is straightforward.

\vspace{1cm}
\noindent \textbf{AMS Subject Classification:} 35K65; 35C07, 35K55, 35K57

\smallskip
\noindent
\textbf{Keywords:} Degenerate parabolic equations, semi-wavefront solutions, collective movements, crowd dynamics.
\end{abstract}

\section{Introduction}\label{s:I}
This paper deals with the scalar parabolic equation
\begin{equation}\label{e:E}
\rho_t + f(\rho)_x=\left( D(\rho)\rho_x\right)_x+g(\rho), \qquad (x,t)\in\R\times[0,+\infty),
\end{equation}
where $f\in C^1[0,\overline \rho]$, $f(0)=0$, $g\in C[0,\overline \rho]$ and $D\in C^1[0,\overline \rho]$, for some $\overline \rho >0$; we denote
\[
h(\rho)=f'(\rho).
\]
 The diffusion coefficient (or diffusivity) $D$ is required to satisfy one of the following assumptions, in increasing order of degeneracy at $0$:
\begin{itemize}

\item[{(D${}_0$)}] \, $D(\rho)>0$ for $\rho\in[0, \overline \rho]$;

\item[{(D${}_1$)}] \, $D(\rho)>0$ for $\rho\in(0, \overline \rho]$ and $D(0)=0$, $\dot D(0)>0$;

\item[{(D${}_2$)}] \, $D(\rho)>0$ for $\rho\in(0, \overline \rho]$ and $D(0)=\dot D(0)=0$.

\end{itemize}
We denoted by a dot the differentiation with respect to $\rho$. In the following, we simply refer to condition (D) when we indifferently assume either \D{0} or  \D{1} or else \D{2}. About the forcing term $g$ we require that it vanishes at $\overline{\rho}$, namely,
\begin{itemize}
\item[{(g)}] \, $g(\rho)>0$ for $\rho\in[0, \overline \rho)$ and $g(\overline \rho)=0$.
\end{itemize}

The reaction-diffusion-convection equation \eqref{e:E}, with $D$ vanishing (as a power function) at some points, models several physical and biological phenomena. We refer to \cite{GK, Kalashnikov, Murray, SanchezGarduno-Maini, Vazquez} for many applications and analytic results; however, none of these papers seems to deal with a source term $g$ satisfying (g) . The porous media equation \cite{Vazquez}, where $D(\rho)=m\rho^{m-1}$, does not enter in this framework if $1<m<2$, since in that case $D(0)=0$ but $\dot{D}(0)=\infty$; nevertheless, we shall provide results later on also for this case. However, our main source of inspiration has been the appearance of \eqref{e:E} with $g=0$ in the framework of collective movements, namely, traffic flows and crowd dynamics. We briefly account on this topic in the following lines.

\smallskip

The simplest continuum (macroscopic) model for traffic flow is probably the famous Lighthill-Whitham-Richards equation \cite{Lighthill-Whitham,Richards}
\begin{equation}\label{e:LWR}
\rho_t +\left(\rho v(\rho)\right)_x = 0.
\end{equation}
It coincides with \eqref{e:E} if $D=g=0$ and $f(\rho)= \rho v(\rho)$. Here, $\rho\in[0,\overline{\rho}]$ represents a density, $\overline{\rho}$ being the maximal density; the function $v(\rho)$ is an assigned speed, which is usually assumed to be decreasing and satisfying $v(\overline{\rho})=0$. Because of its simplicity, equation \eqref{e:LWR} is also the starting point for modeling crowd dynamics; we refer to \cite{Bellomo-Delitala-Coscia, Bellomo-Dogbe, Cristiani-Piccoli-Tosin,Rosinibook} for more information on these subjects.

Already Lighthill and Whitham \cite{Lighthill-Whitham} proposed to include a {\em linear} diffusion term in \eqref{e:LWR} to avoid the appearance of shock waves; in this case, the diffusivity $D$ is constant and \D{0} holds. We notice that the fundamental property of mass conservation, which clearly holds for equation \eqref{e:LWR}, is still valid in presence of a further diffusion term \cite[(3.43)]{Vazquez}. In recent years, several authors discussed the problem of choosing the \lq\lq correct\rq\rq\ diffusivity $D$. In particular, the paper \cite{Nelson_2000} (see also \cite{Nelson_2002,Payne}) considers the case
\begin{equation}\label{e:DNelson} D(\rho) = -\rho\left(Lv'(\rho) + \tau\rho\left(v'(\rho)\right)^2\right),
\end{equation}
where $L$ and $\tau$ are an anticipation distance and a relaxation time, respectively. Consider $v\in C^2[0,\overline{\rho}]$ and assume $\min_{\rho\in[0,\overline{\rho}]}\rho v'(\rho)>-L/\tau$, so that $D(\rho)>0$ if $\rho\in(0,\orho]$; if $v'(0)\not=0$ then \D{1} holds, if $v'(0)=0$ then \D{2} holds. On the other hand, (D) fails under the choice $v(\rho) = \min\left\{\overline{v},C\log(\overline{\rho}/\rho)\right\}$, where $\overline{v}$ is the maximal velocity and $C>0$ a constant \cite{Nelson_2000}: the problem is not only the loss of smoothness of $D$, which is discontinuous, but also the fact that it identically vanishes in a right neighborhood of $0$. A thorough discussion on the possible choices of $D$ is provided in \cite{Bellomo-Delitala-Coscia} and leads to discard constant diffusivities; the case $D(0)=D(\overline{\rho})=0$ is motivated in \cite{De_Angelis1999}, see also \cite{Bonzani2000, Bonzani-Mussone2002}. The case when both $h$ and $D$ depend on $x$ is considered in \cite{Burger-Karlsen_2003}.

Equation \eqref{e:E} also occurs in crowd dynamics, again in the case $g=0$. It has recently  been proposed in \cite{BTTV} (see also \cite{Cristiani-Piccoli-Tosin,Venuti-Bruno}) for
\begin{equation}\label{e:v1}
v(\rho)=\overline{v}\left( 1-{\rm e}^{-\gamma\left( \frac{1}{\rho}-\frac{1}{ \overline{\rho}}\right)}\right), \quad D(\rho) =-\delta \rho v'(\rho), \quad\rho \in [0,  \overline{\rho}].
\end{equation}
Here, $\overline{v}$ and $\overline{\rho}$ have the same meaning as above, $0<\gamma<\orho$ is obtained through experimental data and $\delta>0$ represents the characteristic depth of the visual field of pedestrians. In \eqref{e:v1} we clearly think of $v$ as a function defined in $(0,\overline{\rho}]$ and extended to $0$ in $C^\infty$ way. The choice \eqref{e:v1} satisfies assumption \D{2}; notice that $D(\overline{\rho})\ne0$. We emphasize that the exponential flatness of $D$ at $0$ due to \eqref{e:v1} is far from being common in applications; see however \cite[\S 21.3]{Vazquez}.

In the framework of collective movements, the case when $g$ does not vanish identically seems to have been often neglected but it is important to model entries or exits; we refer to \cite{Bagnerini-Colombo-Corli} for traffic flows and  \cite{Cristiani-Piccoli-Tosin} for pedestrian dynamics with zero diffusivity.
Usually such terms are localized in the space variables \cite{Bagnerini-Colombo-Corli}, but we chose both to a have a diffuse forcing and keep the assumptions on $g$ as simple as possible. Assumption (g) could be meaningful, for instance, in the case of pedestrians moving (or standing) along a long corridor (or street); if the number of side entries (cross streets, respectively) reaching the corridor is large, one could drop a model with many localized entries in favor of a model with a diffuse source term. Such a situation occurs, for instance, at the barriers of a subway exit; or where the platforms of a railway station reach the main hall; or replacing the corridor with a beach where the access is free. The assumption $g(\overline{\rho})=0$ in (g) models the fact that there is no room for further entries if the maximal density is reached. A simple example of forcing term $g$ satisfying (g) is $g(\rho) = L\cdot(\overline{\rho}-\rho)^\alpha$, for constants $L>0$ and $\alpha>0$: it plays a key role in Theorem \ref{t:strictly}.

In order to encompass all significative cases, in the last part of the paper we also consider the case when the slope of $D$ at $0$ is infinity, as it is the case for the above mentioned porous media equation if $1<m<2$. More precisely, in that part we assume $D\in C[0, \orho]\cap C^1(0, \orho)$ and one of the following conditions:
\begin{itemize}
\item[{({\^D}${}_0$)}] \, $D(\rho)>0$ for $\rho\in[0, \overline \rho]$ and $\dot D(0)=\pm \infty$.
    
\item[{({\^D}${}_1$)}] \, $D(\rho)>0$ for $\rho\in(0, \overline \rho]$ and $D(0)=0$, $\dot D(0)=\infty$.

\end{itemize}
As for (D), we simply refer to ({\^D}) when we assume indifferently either {({\^D}${}_0$)} or {({\^D}${}_1$)}. We treat case (D) first and separately from case ({\^D}) to avoid the discussion of several subcases at the same time; indeed, the techniques used for the former case are analogous to those exploited for the latter.

In this paper we consider neither the case where $D$ also vanishes at $\overline{\rho}$ nor, as a consequence, the case where in addition it changes sign in the interval $(0,\orho)$. In the case $g(0)=g(\orho)=0$, the former case was studied in \cite{MMconv}, the latter in \cite{Maini-Malaguti-Marcelli-Matucci}; see also \cite{Ferracuti-Marcelli-Papalini} for $D$ changing sign two times. Indeed, the case when $D$ changes sign is mentioned in \cite{Nelson_2000} and occurs in \eqref{e:DNelson} for particular but meaningful choices of $v$, $L$ and $\tau$; moreover, it naturally arises by applying the expansion of \cite{BTTV} to some velocity laws recently introduced in \cite{Colombo-Rosini2005} to model panic phenomena in crowd dynamics, see \cite{Rosinibook} for more information. The case when $D$ depends on $\rho_x$ has been studied by many authors, see for instance \cite{Garrione-Strani}.

Several extensions of the results provided in this paper, namely, the case when $D(\orho)=0$ and $g$ either satisfies (g) or can assume negative values, are contained in \cite{Corli-diRuvo-Malaguti}; here, we set up the main mathematical framework and only deal with the simplest application. Nevertheless, most of our results are new and are not contained in \cite{Ferracuti-Marcelli-Papalini,Maini-Malaguti-Marcelli-Matucci,MMconv}.

\smallskip

Now, we focus on the analytical aspects of the paper, that we believe are interesting by their own. A traveling-wave solution of equation \eqref{e:E} is a solution $\rho(x,t)$ satisfying $\rho(x,t)=\varphi(x-ct)$ for some wave profile $\varphi(\xi)$ and constant speed $c$. It is easy to see that  $\varphi(\xi)$ satisfies the equation
 \begin{equation}\label{e:tws}
\left(D(\varphi)\varphi^{\, \prime}\right)^{\, \prime} + \left(c-h(\varphi)\right)\varphi^{\, \prime}+g(\varphi)=0
  \end{equation}
in some open interval $I\subseteq \R$; we denoted by a prime the differentiation with respect to $\xi$. Since \eqref{e:tws} is an autonomous equation, every function  $\varphi(\xi-\xi_0)$, that is obtained from a traveling-wave solution by a shift of length $\xi_0$, is again a traveling-wave solution. Therefore, profiles can be unique only up to shifts. A traveling-wave solution between {\em two} stationary states of \eqref{e:E}, i.e. two zeros of $g$, which in addition is monotonic, is usually known as a \emph{wavefront solution}. For such sources $g$, equation \eqref{e:E} usually supports wavefront solutions; we refer to \cite{GK,GK05,MMconv} and references therein for recent results on this topic.

In \cite{DePablo-Sanchez} it is considered the case when $g$ only vanishes at {\em one} point (namely, $\rho=0$) and $D$, $h$, $g$ are polynomials. By classical techniques in the phase plane, the authors show the existence of global traveling wave solutions that decrease to $0$, see Section 2. According to our assumption (g), also equation \eqref{e:E} has only one stationary state (namely $\rho= \overline \rho$) but the domain of $g$ is the closed interval  $[0, \overline \rho]$; hence, only {\em semi-wavefront solutions}, see Section \ref{s:main}, may exist, as already showed in \cite{GK} in the special case $h(\varphi)\equiv 0$. Roughly speaking, the wave profiles of such solutions are only defined in a half-line $(-\infty,\varpi)$ or $(\varpi,+\infty)$ and tend to the stationary state either at $-\infty$ or to $+\infty$; a suitable change of variable commutes wave profiles of a type in those of the other type.

The aim of this paper is to extend some results of \cite{GK} to equation \eqref{e:E} when $h(\varphi)$ does not necessarily vanish identically, providing a unified treatment when either (D) or ({\^D}) holds; moreover, we improve the results in \cite{GK} by fully characterizing the slope of the wave profile when it reaches $0$. In Theorem \ref{t:semi}, which extends \cite[Theorem 6.1]{GK}, we prove that equation \eqref{e:E} has semi-wavefront solutions both with decreasing and with increasing profiles, for every wave speed $c$; moreover, such solutions are of class $C^2$ (see Remark \ref{rem:smoothphi}) and are unique up to shifts. We also explicitly compute the slope of the front when it reaches the value $0$, see Theorem \ref{t:semi}.
In Theorem \ref{t:strictly}, which parallels \cite[Theorem 6.2]{GK}, we fully characterize the semi-wavefront solutions of \eqref{e:E} that reach the value $\overline \rho$ only asymptotically: this happens if and only if $g$ satisfies condition \eqref{e:pend g}. A last result concerns the juxtaposition of two semi-wavefront profiles to obtain a global traveling-wave solution; while this procedure is succesfull for some dispersive equations \cite{Lenells_Classification}, we show that it is {\em not} effective in the current case.

The main technical tool to prove the above results is an order reduction of equation \eqref{e:tws}. Indeed, due to the sign condition on the source term $g$, it is possible to prove that every semi-wavefront solution has a wave profile $\varphi(\xi)$ that is strictly monotone in the region where $0 \le \phi(\xi) < \orho$, see Proposition \ref{p:monot}; hence, it is invertible there, with inverse function $\xi=\xi(\phi)$, $\phi\in[0,\orho)$. This allows us to reduce the second-order equation \eqref{e:tws} to a first-order equation; indeed, a straightforward computation shows that $z(\varphi):=D(\varphi)\varphi^{\, \prime}\left(\xi(\varphi)\right), \, \varphi\in (0, \overline \rho)$, satisfies the singular equation
\begin{equation}\label{e:zeq}
\dot z(\varphi)=h(\varphi)-c-\frac{D(\varphi)g(\varphi)}{z(\varphi)}, \qquad \varphi \in (0, \overline \rho).
\end{equation}
The study of \eqref{e:zeq} requires an original technique that has been developed in \cite{MMconv} and is  based on comparison-type arguments, i.e., on the existence of upper- and lower-solutions. The possible degenerate behavior of $D$ imposes a quite sharp construction of these solutions.

We mentioned above that assumption (D) fails if $D\in C^1[0,\orho]$ vanishes identically in $[0,\rho_1]$, for $0<\rho_1<\orho$, \cite{Nelson_2000}. However, if $D$ is strictly positive in $(\rho_1,\orho]$, then our results apply and provide wave profiles connecting $\rho_1$ with $\orho$. Moreover, our results directly extend to scalar parabolic equations in several space dimensions; in that case, the solutions are of the form $\rho(x,t) = \phi(x\cdot\nu - ct)$, where $\nu\in\R^n$, $|\nu|=1$, is a fixed vector and $x\in\R^n$. Indeed, in such a case equation \eqref{e:tws} becomes
\[
\left(D(\varphi)\varphi^{\, \prime}\right)^{\, \prime} + \left(c-h(\varphi)\cdot\nu\right)\varphi^{\, \prime}+g(\varphi)=0,
\]
which is analogous to \eqref{e:tws}.

The plan of the paper now follows. Section \ref{s:main} contains the statements of the main results; proofs are postponed to the following sections, in particular to Section \ref{s:proofs}. Section \ref{s:example} shows some applications to the model for crowds dynamics introduced above. Sections \ref{s:comp} to \ref{sec:pasting} deal with case (D). In Section \ref{s:comp} we prove some basic facts about equation \eqref{e:zeq}; the study of a first-order boundary value problem related to that equation is completed in Section \ref{s:first solution} while in Section  \ref{s:equiv} we show the equivalence of \eqref{e:tws} and \eqref{e:zeq}. In the final Section \ref{sec:pasting} we discuss the problem of pasting semi-wavefront profiles to obtain global traveling-wave solutions. Case ({\^D}) is studied in Section \ref{sec:slopeinfty}.

\section{Main results}\label{s:main}
\setcounter{equation}{0}
This section contains the main results of the paper. First, we introduce the notions of traveling-wave and semi-wavefront solutions to \eqref{e:E}; assumptions (D) and (g) are not needed in these definitions. We refer to \cite{GK} for more details.

\begin{definition}\label{d:tws} Let $I\subseteq \R$ be an open interval, $c\in\R$ and $\varphi \colon I \to [0, \overline{\rho}]$ such that $\varphi\in C(I)$, $D(\phi)\varphi^{\, \prime}\in L_{\rm loc}^1(I)$ and
\begin{equation}\label{e:def-tw}
\int_I \left\{\left(D\left(\phi(\xi)\right)\phi'(\xi) - f\left(\phi(\xi)\right) + c\phi(\xi) \right)\psi'(\xi) - g\left(\phi(\xi)\right)\psi(\xi)\right\}\,d\xi =0,
\end{equation}
for every $\psi\in C_0^\infty(I)$. Then, for all $(x,t)$ with $x-ct \in I$, the function $\rho(x,t)=\varphi(x-ct)$ is said a traveling-wave solution of equation \eqref{e:E} with wave speed $c$ and wave profile $\phi$. The traveling-wave solution is global if $I=\R$.
\end{definition}

If $\phi$ is a differentiable function, $\varphi'$ is absolutely continuous and \eqref{e:tws} holds a.e., then clearly \eqref{e:def-tw} holds. Such profiles, as well as the corresponding traveling-wave solutions, are called {\em classical}. In this paper we always deal with classical profiles; nevertheless, the above definition of {\em weak} solution is exploited in Section \ref{sec:pasting}. 

Of course, if $\phi(\xi)$ satisfies \eqref{e:def-tw} (or \eqref{e:tws} a.e. in $I$), then $\rho(x,t)=\varphi(x-ct)$ is a weak solution of (resp., solves a.e.) \eqref{e:E} in the corresponding subset of $\R^2$.

Now, we introduce semi-wavefront solutions for equation \eqref{e:E}. With respect to traveling-wave solutions, we essentially require that the wave profiles are defined in a half-line and, as a consequence, tend to a stationary value either at $+\infty$ or at $-\infty$.

\begin{definition}\label{d:swf}
Consider a traveling-wave solution $\rho$ of equation \eqref{e:E} whose wave profile $\phi$ is defined in $(\varpi, +\infty)$, with $\varpi \in \R$; let $\ell^+\in[0,\overline{\rho}]$ be such that $g(\ell^+)=0$. If $\phi$ is monotonic, non-constant and
$$
\varphi(\xi) \to \ell^+ \quad \text{as } \xi \to +\infty,
$$
then $\rho$ is said a \emph{semi-wavefront solution} of \eqref{e:E} {\em to} $\ell^+$.

Similarly, assume that $\phi$ is defined $(-\infty, \varpi)$ and let $\ell^-\in[0,\overline{\rho}]$ be such that $g(\ell^-)=0$. If $\phi$ is monotonic, non-constant and
$$
\varphi(\xi) \to \ell^- \quad \text{as } \xi \to -\infty,
$$
then $\rho$ is said a \emph{semi-wavefront solution} of \eqref{e:E} \emph{from} $\ell^-$.

In both cases, a semi-wavefront solution is {\em strict} if it is not extendible to a global traveling-wave solution.
\end{definition}

For sake of precision, we point out that {\em monotonic} in the previous definition is meant in the weak sense: if $\xi_1< \xi_2$, then either $\phi(\xi_1)\le \phi(\xi_2)$ or $\phi(\xi_1)\ge \phi(\xi_2)$. Above and in the following, wave profiles are always defined in their {\em maximal} existence interval. Due to the regularity of $D$ and $g$, we will show in the following (see Theorem \ref{t:semi}) that \eqref{e:E} always admits classical semi-wavefront solutions for any $c\in\R$.

For comparison with our results, we first provide a simple application of \cite{GK} to equation \eqref{e:E} {\em in the case } $g\equiv0$, namely:
\begin{equation}\label{e:Ewg}
\rho_t+f(\rho)_x=\left( D(\rho)\rho_x\right)_x, \qquad (x,t)\in\R\times[0,+\infty).
\end{equation}
We notice that any $\tilde\rho\in[0,\orho]$ is an equilibrium for equation \eqref{e:Ewg}; however, for simplicity, we only focus on $\orho$. 

\begin{theorem}\label{t:swfwg} Assume condition {\rm (D)}; then
we have the following results.

\begin{enumerate}[(i)]
\item If $c <h(\orho)$ ($c > h(\orho)$), then equation \eqref{e:Ewg} has a classical semi-wavefront solution from $\orho$ (resp., to $\orho$) with wave speed $c$;

\item if $c = h(\orho)$, the same result holds if and only if for some $0<\delta \le \orho$ we have
\begin{equation*}
\int_{\orho-s}^{\orho}\left[h( \sigma)- h(\orho)\right]\, d\sigma > 0 \ \left(\hbox{ resp.,}\int_{\orho-s}^{\orho}\left[h( \sigma)- h(\orho)\right]\, d\sigma < 0\right),\quad \text{for }0<s<\delta;
\end{equation*}

\item if $c > h(\orho)$ ($c < h(\orho)$), then equation \eqref{e:Ewg} has no classical semi-wavefront solution from $\orho$ (resp., to $\orho$) with wave speed $c$.
\end{enumerate}
Moreover, when the above semi-wavefront solutions exist they are unique up to shifts and their wave profiles are of class $C^2$ in $(-\infty,\varpi)$ or $(\varpi,+\infty)$, respectively.
\end{theorem}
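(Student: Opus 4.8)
The plan is to exploit the fact that for $g\equiv0$ the profile equation \eqref{e:tws} is \emph{exact}, which collapses the problem to a scalar autonomous first-order equation whose phase line can be read off directly. For equation \eqref{e:Ewg} the profile $\varphi$ satisfies $(D(\varphi)\varphi')'+(c-h(\varphi))\varphi'=0$, and since $h=f'$ the convective term equals $\frac{d}{d\xi}\big(c\varphi-f(\varphi)\big)$.

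So first I would integrate once, obtaining
\begin{equation*}
D(\varphi)\varphi' = f(\varphi)-c\varphi+K
\end{equation*}
for a constant $K$. To pin down $K$ for a semi-wavefront to (or from) $\orho$, I would argue that at the infinite end the right-hand side, being a continuous function of $\varphi$, tends to $f(\orho)-c\orho+K$; since $D(\orho)>0$ under (D), $\varphi'$ then has a finite limit, and boundedness of $\varphi$ forces this limit to be $0$, whence $K=c\orho-f(\orho)$. Setting $F(\varphi):=f(\varphi)-f(\orho)-c(\varphi-\orho)=\int_\orho^\varphi\big(h(\sigma)-c\big)\,d\sigma$, the profile thus solves the autonomous equation
\begin{equation*}
\varphi' = \frac{F(\varphi)}{D(\varphi)}, \qquad F(\orho)=0,\quad F'(\varphi)=h(\varphi)-c,
\end{equation*}
with $D>0$ near $\orho$, so that $\orho$ is an equilibrium of a regular scalar field and the whole analysis reduces to the local behaviour just below $\orho$ (recall $\varphi\le\orho$, so only approach from below is relevant).

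The heart of the proof is then a phase-line study of this equation near $\orho$. When $c\neq h(\orho)$ the linearization has slope $F'(\orho)/D(\orho)=(h(\orho)-c)/D(\orho)\neq0$: if $c<h(\orho)$ then $F<0$ just below $\orho$, so $\varphi$ is decreasing and leaves $\orho$ as $\xi$ increases, i.e. it reaches $\orho$ only as $\xi\to-\infty$, giving a decreasing semi-wavefront from $\orho$ and simultaneously excluding any semi-wavefront to $\orho$; the case $c>h(\orho)$ is symmetric and yields an increasing semi-wavefront to $\orho$. This proves (i) and (iii) at once, the non-existence claims amounting to the observation that the "wrong" sign of $F'(\orho)$ makes $\orho$ attract (resp.\ repel) from below, so no orbit can approach it at the required infinite end. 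That the admissible connection is genuinely asymptotic is confirmed by the divergence of $\int^{\orho}D(\sigma)/F(\sigma)\,d\sigma$, which here is logarithmic because $F(\sigma)\sim F'(\orho)(\sigma-\orho)$. In the critical case $c=h(\orho)$ we have $F'(\orho)=0$ and linearization is silent, so I would read the sign of $F$ directly: since $F(\orho-s)=-\int_{\orho-s}^{\orho}\big(h(\sigma)-h(\orho)\big)\,d\sigma$, the displayed positivity (resp.\ negativity) condition is exactly $F<0$ (resp.\ $F>0$) on a left neighbourhood of $\orho$, which by the same monotonicity argument produces the semi-wavefront from $\orho$ (resp.\ to $\orho$); here $F(\sigma)=o(\sigma-\orho)$ makes the time integral diverge even faster, so the orbit still reaches $\orho$ only at infinity. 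This gives (ii).

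Finally, uniqueness up to shifts is immediate from the autonomous character of the reduced equation: through each admissible value $\varphi_0\in(0,\orho)$ there passes a single orbit, determined up to translation in $\xi$, and the divergence of the time integral at $\orho$ rules out distinct orbits leaving the equilibrium at the same infinite end. The $C^2$ regularity follows by bootstrapping, since where $0<\varphi<\orho$ the field $F/D$ is $C^1$, hence $\varphi'$ is $C^1$ and $\varphi\in C^2$ on the maximal half-line, whose finite endpoint $\varpi$ is simply where the monotone profile leaves $(0,\orho)$. I expect the only delicate point to be the critical speed $c=h(\orho)$, where linearization is unavailable and one must extract both the correct sign of $\varphi'$ and the infinite-time approach to $\orho$ from the integral condition on $h$ alone.
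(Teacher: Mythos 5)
Your proposal is correct in substance, but it takes a genuinely different route from the paper. The paper proves Theorem \ref{t:swfwg} by a reflection argument: it sets $\bar f(\rho)=f(\orho-\rho)-f(\orho)$ and $\bar D(\rho)=D(\orho-\rho)$, so that semi-wavefronts of \eqref{e:Ewg} from (or to) $\orho$ correspond, via $\xi=-\zeta$, $\varphi=\orho-\psi$, to semi-wavefronts of the transformed equation \eqref{e:Ewg0} decreasing to $0$, and then it invokes the ready-made theory of \cite[Theorem 5.1]{GK}; the critical-speed condition there, $H(s)=-\bar f(s)-h(\orho)s=\int_{\orho-s}^{\orho}\left[h(\sigma)-h(\orho)\right]d\sigma>0$, is exactly your condition $F<0$ on a left neighborhood of $\orho$. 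You instead exploit exactness ($g\equiv0$) to integrate \eqref{e:tws} once; your reduced equation $D(\varphi)\varphi'=F(\varphi)$ is precisely the $g\equiv0$ specialization of the paper's own reduction \eqref{e:zeq} (where $\dot z=h-c$ integrates explicitly), and your determination of the constant $K$ repeats the limit argument the paper uses in Lemma \ref{l:ab}\emph{(i)} and Lemma \ref{l:Tosin1}. The phase-line analysis then works because the relevant equilibrium is $\orho$, where $D(\orho)>0$ under every variant of (D), so the degeneracy at $0$ never interferes; this buys a self-contained, elementary proof that makes the threshold $h(\orho)$ and the critical integral condition transparent, at the price of handling by hand details the citation absorbs. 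Two such details deserve a line each. First, your phrase that $\varpi$ is ``where the monotone profile leaves $(0,\orho)$'' overlooks that $F$ may vanish at some $\varphi^*\in(0,\orho)$, in which case the orbit from $\orho$ converges to $\varphi^*$ and never leaves $(0,\orho)$: the front is then global (a wavefront), i.e.\ a non-strict semi-wavefront in the sense of Definition \ref{d:swf}, and the half-line is obtained by restriction — the conclusion survives, but the statement needs this case. Second, the ``only if'' half of (ii) should be spelled out: along any decreasing front one has $F\left(\varphi(\xi)\right)=D\left(\varphi(\xi)\right)\varphi'(\xi)\le0$ on the range of $\varphi$, and strict negativity on a full left neighborhood of $\orho$ (which is the displayed condition) follows because touching an interior zero of $F$ at finite $\xi$ would, by Lipschitz uniqueness of the field $F/D$ where $D>0$, force the profile to be constant. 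Both fixes are one-liners within your framework.
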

Condition {\em (ii)} in Theorem \ref{t:swfwg} is satisfied if $f$ is strictly concave (resp., strictly convex) in a neighborhood of $\orho$. For sake of completeness, in the case $f$ is strictly concave we rephrase \cite[Theorem 9.1]{GK}, which concerns wavefront solutions \cite[p. 5]{GK}.

\begin{proposition}\label{p:wfTosinI}
Consider equation \eqref{e:Ewg} under assumption {\rm (D)}, where $f$ is strictly concave; fix $\rho^-\in[0,\orho]$ and $\rho^+\in[0,\orho]$, $\rho^-\ne \rho^+$. Then, wavefront solutions connecting $\rho^-$ with $\rho^+$ exist if and only if $\rho^-<\rho^+$; in that case we have
\[
c=\frac{f(\rho^+)-f(\rho^-)}{\rho^+-\rho^-}.
\]
If $\rho_->0$, wavefront solutions are classical and strictly monotonic; if $\rho_- = 0$, wavefront solutions are still classical and strictly monotonic under \D{0} but they are weak under both \D{1} or \D{2}. In the latter case, we have $\phi(\xi)=0$ for $\xi\in(-\infty,\xi_0]$, for some $\xi_0\in\R$, with $\phi'(\xi_0^+)>0$ under \D{1} and $\phi(\xi_0^+)=\infty$ under \D{2}.
\end{proposition}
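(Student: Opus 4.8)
The plan is to use that $g\equiv0$ collapses \eqref{e:tws} into a first-order autonomous equation that integrates in closed form, and then to extract existence, the speed formula, and every regularity statement from a single quadrature.

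First I would integrate once. With $g\equiv0$ equation \eqref{e:tws} is $\left(D(\phi)\phi'\right)'+\left(c-h(\phi)\right)\phi'=0$, and since $h(\phi)\phi'=\tfrac{d}{d\xi}f(\phi)$ a primitive gives the first integral
\[
D(\phi)\phi'=f(\phi)-c\phi-k
\]
for a constant $k$; equivalently, this is the reduction \eqref{e:zeq} specialized to $g\equiv0$, where $z=D(\phi)\phi'$ solves $\dot z=h-c$. Requiring $\phi\to\rho^\pm$ and $\phi'\to0$ as $\xi\to\pm\infty$ forces $k=f(\rho^-)-c\rho^-=f(\rho^+)-c\rho^+$, whence $c=\bigl(f(\rho^+)-f(\rho^-)\bigr)/(\rho^+-\rho^-)$. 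Setting $F(\phi):=f(\phi)-c\phi-k$, the profile solves $\phi'=F(\phi)/D(\phi)$ with $F(\rho^-)=F(\rho^+)=0$.

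Next I would settle the existence dichotomy by strict concavity. As $-c\phi-k$ is affine, $F$ inherits the strict concavity of $f$, and a strictly concave function with two distinct zeros is strictly positive strictly between them and strictly negative outside; in particular its zeros are simple, with $F'(\rho^-)>0>F'(\rho^+)$ when $\rho^-<\rho^+$. Since $D>0$ in the interior of the range, $\phi'$ has the sign of $F$, so a monotone connection exists iff $F>0$ on the open interval joining the two endpoints, i.e. iff $\rho^-<\rho^+$ (the profile being increasing); if $\rho^->\rho^+$ a connection would have to be decreasing, forcing $F<0$ between the zeros, which is impossible. This gives the claimed \emph{if and only if} together with the speed.

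Finally, for $\rho^-<\rho^+$ I would build the profile by separation of variables, $\xi=\xi_0+\int^{\phi}D(s)/F(s)\,ds$, and read off the behaviour from the endpoints of this integral. Wherever the relevant endpoint lies in $(0,\orho]$, $D$ is bounded away from $0$ and the simple zero of $F$ makes $D/F$ a simple pole, so the integral diverges and the endpoint is attained only as $\xi\to\pm\infty$; the connection is then defined on all of $\R$, and since $F/D$ is $C^1$ where $D>0$ the profile is classical (indeed $C^2$) and strictly increasing. This disposes of every case with $\rho^->0$, and of the right endpoint $\rho^+$ always. The delicate point, which I expect to be the main obstacle, is the degenerate left endpoint $\rho^-=0$, where $F(0)=0$ and $F'(0)=h(0)-c>0$: under \D{0} the pole persists and the profile is classical as before; under \D{1}, where $D(\phi)\sim\dot D(0)\phi$, the zero of $D$ cancels that of $F$ so that $D/F\to\dot D(0)/F'(0)$ is finite, the integral converges, and $\phi$ reaches $0$ at a finite $\xi_0$ with finite slope $\phi'(\xi_0^+)=F'(0)/\dot D(0)>0$; under \D{2}, where $D(\phi)=o(\phi)$, the integrand tends to $0$, the integral converges still faster, and $\phi'(\xi_0^+)=\lim_{\phi\to0^+}F(\phi)/D(\phi)=+\infty$. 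In these two cases I would extend the profile by $0$ on $(-\infty,\xi_0]$; the remaining work is to verify that the cornered profile is a genuine \emph{weak} solution in the sense of \eqref{e:def-tw}, which holds because the flux $D(\phi)\phi'=F(\phi)$ tends to $0=F(0)$ at $\xi_0^+$ and so stays continuous across the junction, producing no spurious contribution there.
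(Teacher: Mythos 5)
Your proposal is correct and follows essentially the same route as the paper, which offers no proof of Proposition \ref{p:wfTosinI} but defers to \cite[Theorem 9.1]{GK}: your first integral $D(\phi)\phi'=f(\phi)-c\phi-k$ is exactly the paper's reduction \eqref{e:zeq} specialized to $g\equiv0$, and the quadrature $\xi=\xi_0+\int^{\phi}D(s)/F(s)\,ds$ with the concavity dichotomy, the Rankine--Hugoniot speed, and the endpoint analysis under \D{0}, \D{1}, \D{2} is the standard argument behind that citation. The only point worth tightening is the weak-solution check at the junction, which should be phrased for the full flux $D(\phi)\phi'-f(\phi)+c\phi$ appearing in \eqref{e:def-tw}; this is indeed continuous across $\xi_0$ because $\rho^-=0$ forces $k=f(0)=0$, so both $F(\phi)$ and $f(\phi)-c\phi$ vanish there.
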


From now on we only deal with the full equation \eqref{e:E}. For brevity we often provide complete statements and proofs for semi-wavefront solutions from $\orho$; analogous results hold for semi-wavefront solutions to $\orho$. Two results on the existence of semi-wavefront solutions follow. 

\begin{theorem} \label{t:semiequiv}  \ Assume {\rm (D)} and {\rm (g)}. The existence of a strict semi-wavefront solution from $\overline \rho$ of  \eqref{e:E} with speed $c$ is equivalent to the solvability, for the same $c$, of the boundary-value problem
\begin{equation}\label{e:fo}
\left\{
\begin{array}{l}
\dot z = h(\varphi)-c-\frac{D(\varphi)g(\varphi)}{z}, \\
z(\varphi)<0,  \quad \varphi \in (0,\overline \rho),\\
z(0\,^+)=:z_0 \le 0, \quad z(\overline{\rho}\,^-)=0.
\end{array}
\right.
\end{equation}
\end{theorem}

Since the first equation in \eqref{e:fo} is singular and its right-hand side is not defined at points $\phi_0$ where $z(\phi_0)=0$, we used the limit notation $z(\phi_0^\pm)$ for such points. We emphasize that $z_0$ is {\em not} a datum in \eqref{e:fo} but simply a shortcut for the real number $z(\,0^+)$. Moreover, the requirement $z(\varphi)<0$ for $\varphi \in (0,\overline \rho)$ is always satisfied if $z_0<0$ and then it is needed only when $z_0=0$. Solutions $z$ to \eqref{e:fo} are meant in the sense $z\in C^0[0,\overline{\rho}]\cap C^1(0,\overline{\rho})$.

The next theorem extends to the case $z(0\,^+)<0$ an analogous result proved in \cite{MMconv} in the case $z(\,0^+)=0$ and under \D{2}.

\begin{theorem} \label{t:fosemi} Assume {\rm (D)} and {\rm (g)}. Then, problem \eqref{e:fo} is uniquely solvable for every  $c\in \mathbb{R}$. More precisely, in case {\rm \D{0}} we have $z(0^+)<0$ for every $c$; in cases {\rm \D{1}} and {\rm \D{2}} there exists a real number $c^*$ satisfying the estimate
\begin{equation}\label{e:cs}
2\sqrt{\dot D(0)g(0)}+h(0)\ \le\ c^*\le\ 2\sqrt{\sup_{s\in (0, \overline \rho)}\frac{D(s)g(s)}{s}}\ +\max_{\rho \in [0, \overline \rho]}h(\rho),
\end{equation}
such that $z(0^+)<0$ if $c<c^*$ and $z(0^+)=0$ if $c\ge c^*$.
\end{theorem}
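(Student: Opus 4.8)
The plan is to treat \eqref{e:fo} as a singular terminal-value problem for \eqref{e:zeq}, integrated from $\orho$ towards $0$, and to control it by comparison. Two structural facts organize everything. Writing the right-hand side as $F(\varphi,z)=h(\varphi)-c-D(\varphi)g(\varphi)/z$, on the region $z<0$ we have $\partial_z F=D(\varphi)g(\varphi)/z^2>0$ (since $D(\varphi)g(\varphi)>0$ on $(0,\orho)$ by (g) and (D)), so $F$ is strictly increasing in $z$; moreover $\partial_c F=-1<0$, so $F$ is strictly decreasing in $c$. The first property yields a comparison principle for sub- and supersolutions; the second gives monotone dependence of the solution on $c$, which is the mechanism producing the threshold $c^*$.

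First I would establish local solvability near $\orho$, which I expect to be the main obstacle. There both the numerator $D(\varphi)g(\varphi)$ and the unknown $z$ vanish as $\varphi\to\orho^-$ (by $g(\orho)=0$ and the boundary condition), so the equation is genuinely singular, and since $g$ is only continuous no clean linearization is available. The idea is to bracket the solution between explicit barriers of the form $z=-m(\orho-\varphi)^\beta$, with $\beta$ and $m>0$ chosen from the local balance in $z\dot z=(h(\varphi)-c)z-D(\varphi)g(\varphi)$: when $c<h(\orho)$ the linear scaling $\beta=1$, $m\approx h(\orho)-c$, dominates, while for $c\ge h(\orho)$ or for $g$ flat at $\orho$ the balance is driven by the last term. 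A squeezing/monotone-iteration argument between these barriers then produces a solution with $z<0$ just left of $\orho$ and $z(\orho^-)=0$, valid for every $c\in\R$; uniqueness of this selected branch at the singular endpoint is what ultimately makes the solution of \eqref{e:fo} unique.

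Once the local solution exists, I would continue it leftwards. In $(0,\orho)$ the equation is regular as long as $z<0$: the solution cannot reach $z=0$ from below, because there $-D(\varphi)g(\varphi)/z\to+\infty$ forces $\dot z\to+\infty$ and repels $z$ away from $0$; and blow-up to $-\infty$ is excluded because then $D(\varphi)g(\varphi)/z\to0$ and $\dot z$ stays bounded near $0$. Hence the solution extends to all of $(0,\orho)$ with $z<0$ and $z(0^+)\in(-\infty,0]$. The nature of $z(0^+)$ splits by cases. Under \D{0} I would use the energy identity $\tfrac12(z^2)'=(h(\varphi)-c)z-D(\varphi)g(\varphi)$: if $z(0^+)=0$ then the right-hand side tends to $-D(0)g(0)<0$, so $z(\varphi)^2=\int_0^\varphi\!\bigl(2(h-c)z-2Dg\bigr)\,d\sigma\approx-2D(0)g(0)\varphi<0$ for small $\varphi$, a contradiction; thus $z(0^+)<0$ for every $c$. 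Under \D{1} and \D{2} this obstruction disappears because $D(0)=0$, and the sign of $z(0^+)$ is decided by $c$.

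For the threshold, monotone dependence on $c$ (from $\partial_c F<0$, via comparison after the reflection $s=\orho-\varphi$) shows that $c\mapsto z_c(0^+)\le0$ is non-decreasing, so $\{c:z_c(0^+)=0\}$ is a half-line $[c^*,+\infty)$, which gives the dichotomy; it then remains to locate $c^*$ as in \eqref{e:cs}. For the upper bound I would use the linear subsolution $z=-\mu\varphi$ on all of $(0,\orho)$: it stays below the solution provided $F(\varphi,-\mu\varphi)\le-\mu$, i.e. $c\ge h(\varphi)+\mu+\dfrac{D(\varphi)g(\varphi)}{\mu\varphi}$ for all $\varphi$; bounding $h\le\max_{\rho}h(\rho)$ and $D(\varphi)g(\varphi)/\varphi\le S:=\sup_{s}D(s)g(s)/s$ and optimizing at $\mu=\sqrt S$ forces $z\ge-\sqrt S\,\varphi$, hence $z(0^+)=0$, as soon as $c\ge 2\sqrt S+\max_{\rho}h(\rho)$. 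For the lower bound I would analyze the rate at $0$ under \D{1}: an inspection of the two terms of \eqref{e:zeq} excludes sub- and superlinear decay, so $z(0^+)=0$ forces $z\sim-k\varphi$ with $k>0$ a root of $k^2+(h(0)-c)k+\dot D(0)g(0)=0$, and a positive root exists only if $c\ge h(0)+2\sqrt{\dot D(0)g(0)}$; contrapositively $z(0^+)<0$ below this value (with $\dot D(0)=0$ under \D{2}). Uniqueness throughout follows from the comparison principle together with the uniquely selected branch at $\orho$.
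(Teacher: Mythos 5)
Your proposal shares the paper's core mechanisms (the comparison principle coming from $\partial_z F>0$ on $z<0$, monotone dependence on $c$ as in Lemma \ref{l:slope z}, continuation on $(0,\orho)$ as in Lemma \ref{l:iandfvp}), and two of your ingredients are genuinely different from the paper and correct. First, your energy-identity argument under \D{0} --- if $z(0^+)=0$ then $\tfrac12 z^2(\varphi)=\int_0^\varphi\left[(h-c)z-Dg\right]d\sigma\approx-D(0)g(0)\varphi<0$ --- is simpler than the paper's device, which extends $h$, $g$, $D$ to $[-\orho,\orho]$ with $\hat D$ degenerate at $-\orho$ and reruns the \D{2} machinery on the doubled interval. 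Second, your linear barrier with $\mu=\sqrt{S}$, $S:=\sup_s D(s)g(s)/s$, reproduces the upper bound in \eqref{e:cs} exactly; note only that $\eta(\varphi)=-\mu\varphi$ with $F(\varphi,-\mu\varphi)\le-\mu=\dot\eta$ is an \emph{upper}-solution in the sense of Definition \ref{d:ul}, not a subsolution, and it forces $z\ge-\sqrt S\,\varphi$ via Lemma \ref{l:keyif}\emph{(2.ii)} since $\eta(\orho)<0=z(\orho^-)$. Where you diverge structurally is at $\orho$: the paper never solves the singular terminal-value problem there by barriers; it takes the increasing limit of the regular final-value problems $z_n(\orho)=-1/n$, squeezed between a globally defined solution with very negative $z(0)$ and the solution $z_{c^*}$, whose existence --- together with the entire threshold structure and estimate \eqref{e:cs} --- it \emph{imports} from \cite{MMconv}. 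Your barrier sketch is also underspecified on its own terms: since (g) prescribes no vanishing rate for $g$ at $\orho$, the linear scaling $\beta=1$ fails whenever $g$ vanishes more slowly than $\orho-\rho$; the barrier that works universally is $\eta=-m\sqrt{\orho-\varphi}$, precisely because $g(\orho)=0$.

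Two steps in your outline are genuine gaps. (1) \emph{Closedness of the threshold set.} Monotonicity of $c\mapsto z_c(0^+)$ only shows that $\{c:z_c(0^+)=0\}$ is upward closed; the theorem asserts it equals the \emph{closed} half-line $[c^*,+\infty)$, i.e.\ $z_{c^*}(0^+)=0$ at the endpoint itself, and nothing in your argument delivers this: taking $c_n\downarrow c^*$, the decreasing limit of the $z_{c_n}$ is a solution but need not retain the boundary value $z(0^+)=0$ in the limit. This closedness is part of what the paper quotes from \cite{MMconv}; it is real content, not bookkeeping. (2) \emph{The rate analysis at $0$.} Your claim that ``inspection of the two terms excludes sub- and superlinear decay'' is false under \D{2}: Proposition \ref{p:slopePD} shows $\dot z_c(0^+)=0$ for every $c>c^*$, so the root $k=0$ of your quadratic is actually realized. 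The lower bound $c^*\ge h(0)$ under \D{2} can be rescued elementarily from \eqref{e:bound}: if $z(0^+)=0$ and $c<h(0)$, integrating $\dot z>h(\varphi)-c$ from $0$ gives $z(\varphi)>0$ near $0$, contradicting $z<0$. Under \D{1}, however, proving that $z(\varphi)/\varphi$ has a limit --- excluding $\liminf<\limsup$ --- is exactly the delicate point on which the paper spends the liminf/limsup sequence argument of Proposition \ref{p:slopePD}, with Lemma \ref{l:slope z} then selecting the admissible root; ``inspection'' does not supply it. Until (1) and (2) are filled, your proposal yields existence and uniqueness for each fixed $c$, the \D{0} statement, and the upper bound in \eqref{e:cs}, but not the stated dichotomy at $c=c^*$ nor the lower bound under \D{1}.
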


We notice that in case \D{2} the inequalities in \eqref{e:cs} reduce to
\begin{equation}\label{e:threshold}
h(0)\ \le\ c^*\le\ 2\sqrt{\sup_{s\in (0, \overline \rho)}\frac{D(s)g(s)}{s}}\ +\max_{\rho \in [0, \overline \rho]}h(\rho).
\end{equation}

Now, we provide our main results. The first one concerns the existence of strict semi-wavefront solutions to \eqref{e:E}, under assumptions (D) and (g); since wave profiles are defined in their maximal existence interval, then $\phi(\varpi)=0$; see Figure \ref{f:SWFs}. For brevity, in cases \D{1}, \D{2} and for $c\ge c^*$, we introduce the notation
\begin{equation}\label{e:r_pm}
r_\pm(c) := \frac{h(0)-c\pm \sqrt{(h(0)-c)^2-4\dot D(0)g(0)}}{2}.
\end{equation}
Notice that the term under square root is positive because of \eqref{e:cs}; moreover, $r_\pm(c)<0$.

\begin{theorem}[Semi-wavefront solutions]\label{t:semi} Consider equation \eqref{e:E} under assumptions  {\rm (D)} and {\rm (g)}. Then, the following holds.

\begin{enumerate}[{(i)}]
\item For every wave speed $c \in \mathbb{R}$, equation \eqref{e:E} has a strict classical semi-wavefront solution from $\overline \rho$ and a strict classical semi-wavefront solution to $\overline \rho$. These solutions are unique up to shifts and their wave profiles are of class $C^2$ in $(-\infty,\varpi)$ or $(\varpi,+\infty)$, respectively.

\item Consider a semi-wavefront solution from $\orho$; then, about the slope of the profile when it reaches $0$, we have:
\begin{align}
\hbox{in case {\rm \D{0}}: \quad} & \lim_{\xi \to \varpi^-}\varphi^{\, \prime}(\xi)\in(-\infty,0),\label{e:slope0}
\\
\hbox{ in case {\rm \D{1}}: \quad} &
\lim_{\xi \to \varpi^-}\varphi^{\, \prime}(\xi)=
\left\{
\begin{array}{ll}
-\infty & \hbox{ if } c < c^*,\\
\ds\frac{r_-(c^*)}{\dot D(0)} & \hbox{ if } c = c^*,\\
\ds\frac{r_+(c)}{\dot D(0)} & \hbox{ if } c>c^*,
\end{array}
\right.
\label{e:slope1}
\\
\hbox{ in case {\rm \D{2}}: \quad} & \lim_{\xi \to \varpi^-}\varphi^{\, \prime}(\xi)=
\left\{
\begin{array}{ll}
-\infty & \hbox{ if } c \le c^*,\\
-\ds\frac{g(0)}{c-h(0)} & \hbox{ if } c>c^*.
\end{array}
\right.
\label{e:slope2}
\end{align}

\item Let $\phi_1$ and $\phi_2$ be two profiles corresponding to semi-wavefront solutions from $\orho$ with wave speeds $c_1 < c_2$, respectively; unless of a shift we can assume $\varpi_1=\varpi_2=:\varpi$. Then
\begin{equation}\label{e:remark}
\phi_2(\xi) < \phi_1(\xi), \quad \text{for } \xi\in(-\infty,\varpi) \text{ with }\phi_2(\xi)<\orho.
 \end{equation}

\end{enumerate}
In cases (ii) and (iii) analogous results hold for semi-wave-front solutions to $\orho$.
\end{theorem}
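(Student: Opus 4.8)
The plan is to push everything through the first-order boundary-value problem \eqref{e:fo} and then read the profile off from the relation $z(\varphi)=D(\varphi)\varphi'\bigl(\xi(\varphi)\bigr)$. For part (i), Theorem \ref{t:semiequiv} makes the existence of a strict semi-wavefront from $\orho$ with speed $c$ equivalent to the solvability of \eqref{e:fo} for that $c$, and Theorem \ref{t:fosemi} gives a unique solution $z$ for every $c\in\R$. Since by Proposition \ref{p:monot} the profile is strictly monotone where $0\le\phi<\orho$, it is invertible there, so inverting $z$ reconstructs $\phi$ up to the single integration constant $\varpi$; this yields existence and uniqueness up to shifts. For the \emph{to}-$\orho$ statement I would invoke the reflection $x\mapsto-x$: it turns \eqref{e:E} with data $(f,g,D)$ into the same kind of equation with $(-f,g,D)$, carrying a profile from $\orho$ with speed $c$ into a profile to $\orho$ with speed $-c$, while leaving (D) and (g) untouched; since existence holds for every speed, the \emph{to} case follows. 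Finally, on the open half-line one has $0<\phi<\orho$, hence $D(\phi)>0$, so \eqref{e:tws} can be solved for $\phi''$ and bootstrapped with $D,h,g\in C^1$ to give the $C^2$ regularity recorded in Remark \ref{rem:smoothphi}.

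For part (ii), from $z=D(\varphi)\varphi'$ one obtains $\lim_{\xi\to\varpi^-}\varphi'(\xi)=\lim_{\varphi\to0^+}z(\varphi)/D(\varphi)$, so the slope is dictated by the behaviour of $z$ near $0$. In case \D{0} we have $D(0)>0$ and $z(0^+)<0$ by Theorem \ref{t:fosemi}, giving \eqref{e:slope0} immediately. In the degenerate cases, whenever $z(0^+)<0$ while $D(\varphi)\to0^+$ the ratio diverges, producing the $-\infty$ entries of \eqref{e:slope1}--\eqref{e:slope2}. The finite slopes occur precisely when $z(0^+)=0$ and require the decay rate of $z$: under \D{1} the ansatz $z\sim r\varphi$ in \eqref{e:zeq} forces $r^2-(h(0)-c)r+\dot D(0)g(0)=0$, whose roots are the $r_\pm(c)$ of \eqref{e:r_pm}, whereas under \D{2} the ansatz $z\sim\kappa D(\varphi)$ forces $\kappa=-g(0)/(c-h(0))$; dividing by $D$ gives the stated limits. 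Turning these formal asymptotics into rigorous statements relies on the sub- and super-solution bounds for $z$ constructed in the proof of Theorem \ref{t:fosemi}.

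The main obstacle is the root selection in \D{1}: one must prove that the trajectory reaches the degenerate equilibrium $(\varphi,z)=(0,0)$ along $r_+(c)$ when $c>c^*$ but along $r_-(c^*)$ exactly at $c=c^*$. I expect to handle this by a phase-plane/invariant-region analysis near $(0,0)$, identifying the two admissible linear directions $r_\pm$ and trapping $z$ between barriers tangent to them, using the monotonicity in $c$ of part (iii) together with the behaviour forced by $z(\orho^-)=0$. The jump to the strong-stable branch $r_-$ precisely at the threshold is the delicate point, and I anticipate it must be tied directly to the structure behind the bound \eqref{e:cs} that defines $c^*$.

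For part (iii) I would compare the $z$-functions. If $z_1,z_2$ solve \eqref{e:fo} with $c_1<c_2$, then $w:=z_1-z_2$ satisfies $\dot w=(c_2-c_1)+\frac{D(\varphi)g(\varphi)}{z_1z_2}\,w$ with $z_1z_2>0$, so $\dot w>0$ whenever $w\ge0$; hence $w$ cannot return to $0$ from positive values, and combined with $w(\orho^-)=0$ this forces $w<0$, i.e. $z_1<z_2<0$, on $(0,\orho)$. Normalizing $\varpi_1=\varpi_2=\varpi$ and writing the inverse profiles as $\xi_i(\varphi)=\varpi+\int_0^\varphi D(s)/z_i(s)\,ds$, the inequality $z_1<z_2$ gives $\xi_2(\varphi)<\xi_1(\varphi)$; inverting the strictly decreasing maps $\xi_i$ then yields $\phi_2(\xi)<\phi_1(\xi)$, which is \eqref{e:remark}. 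The corresponding ordering for solutions to $\orho$ follows once more from the reflection symmetry used in part (i).
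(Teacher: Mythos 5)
Your overall architecture is the paper's own: part (i) is Theorems \ref{t:semiequiv} and \ref{t:fosemi} combined, the \emph{to}-$\orho$ case is obtained by the reflection $\xi\mapsto-\xi$ (exactly the paper's device via equation \eqref{e:ER}), and the slopes in (ii) are read off from $\varphi'(\xi)=z(\varphi)/D(\varphi)$. However, part (ii) contains a genuine gap, in two places. First, your dichotomy \lq\lq the finite slopes occur precisely when $z(0^+)=0$\rq\rq\ is false under \D{2}: at $c=c^*$ one has $z(0^+)=0$ and yet \eqref{e:slope2} asserts $\varphi'(\varpi^-)=-\infty$. Your ansatz $z\sim\kappa D(\varphi)$ breaks down exactly there: by Proposition \ref{p:slopePD} one has $\dot z_{c^*}(0^+)=h(0)-c^*$, so when $c^*>h(0)$ the function $z$ vanishes \emph{linearly} while $D(\varphi)=o(\varphi)$, whence $z/D\to-\infty$; and the borderline case $c^*=h(0)$ (possible only under \D{2}, by \eqref{e:cs}) has $\dot z(0^+)=0$ too and requires a separate lower-solution $\omega(\varphi)=-D(\varphi)/\eps$ (case \emph{(d)} of the paper's proof of Theorem \ref{t:semiequiv}) to still conclude $D/z\to 0^-$. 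As written, your scheme would assign the finite slope $-g(0)/(c^*-h(0))$ at $c=c^*$ under \D{2}, contradicting the statement you are proving.

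Second, the root selection under \D{1}, which you correctly flag as the main obstacle, is only announced, not carried out, and it is the technical heart of (ii). In the paper this is Proposition \ref{p:slopePD}: one first proves that $\dot z_c(0^+)$ \emph{exists} by a liminf/limsup argument (sequences $\{\sigma_n\}$, $\{\delta_n\}$ on which $z_c(\varphi)/\varphi$ takes an intermediate value $\gamma$ with prescribed signs of $\frac{d}{d\varphi}\bigl(z_c(\varphi)/\varphi\bigr)$, forcing $\gamma^2-(h(0)-c)\gamma+\dot D(0)g(0)=0$ and hence a contradiction with $l<L$); then $r_-(c)$ is excluded for $c>c^*$ using the strict monotonicity of $c\mapsto r_-(c)$ together with the comparison Lemma \ref{l:slope z}; and the selection of $r_-(c^*)$ precisely at threshold is obtained by approximating with $c_n\uparrow c^*$ (so that $z_n(0)<0$), building the strict upper-solution $\eta(\varphi)=\bigl(r_-(c^*)+\Delta\bigr)\varphi$ near $0$, and passing to the limit $z_n\uparrow z_{c^*}$ to conclude $z_{c^*}(\varphi)/\varphi\le r_-(c^*)+\Delta<r_+(c^*)$. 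Your sketch (barriers tangent to the directions $r_\pm$, monotonicity in $c$) points in the right direction, but without these constructions the entries of \eqref{e:slope1} for $c\ge c^*$ are unproven; note also that the rigorous upper/lower solutions for the $c>c^*$ case of \eqref{e:slope2} live in the proof of Theorem \ref{t:semiequiv} (via Lemma \ref{l:keyif}), not of Theorem \ref{t:fosemi} as you state.

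By contrast, your part (iii) is correct and genuinely simpler than the paper's route. You reprove Lemma \ref{l:slope z} directly from the ODE for $w=z_1-z_2$, namely $\dot w=(c_2-c_1)+\frac{D(\varphi)g(\varphi)}{z_1z_2}\,w>0$ whenever $w\ge0$, which is incompatible with $w(\orho^{\,-})=0$; then the representation $\xi_i(\varphi)=\varpi+\int_0^\varphi D(s)/z_i(s)\,ds$ (legitimate because $\varpi$ is finite) converts $z_1<z_2<0$ into $\xi_2(\varphi)<\xi_1(\varphi)$, and inverting the strictly decreasing branches yields \eqref{e:remark} (the case $\phi_1(\xi)=\orho$ being trivial). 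This bypasses the paper's proof of \eqref{e:ClaimLimit}, which is a four-case computation of $\lim_{\xi\to\varpi^-}\bigl(\varphi_1'-\varphi_2'\bigr)$ depending on \D{0}--\D{2} and the position of $c_1,c_2$ relative to $c^*$, followed by a crossing contradiction; usefully, your argument does not depend on the slope formulas of part (ii) at all. One last cosmetic point: for the $C^2$ bootstrap you invoke $g\in C^1$, but the paper only assumes $g\in C[0,\orho]$ (and $h\in C[0,\orho]$); continuity suffices, arguing on the first-order system for $(\varphi, D(\varphi)\varphi')$ as in Remark \ref{rem:smoothphi}.
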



\begin{figure}[htbp]
\begin{picture}(100,100)(80,-10)
\setlength{\unitlength}{1pt}

\put(160,0){
\put(0,0){\vector(1,0){260}}
\put(0,60){\line(1,0){260}}
\put(260,8){\makebox(0,0){$\xi$}}
\put(130,-10){\vector(0,1){100}}
\put(137,87){\makebox(0,0){$\phi$}}
\put(137,67){\makebox(0,0){$\overline{\rho}$}}

\put(0,0){\thicklines{\qbezier(50,60)(135,60)(150,0)}}
\put(50,60){\thicklines{\line(-1,0){50}}}
\put(100,42){\makebox(0,0)[b]{$\phi_3$}}
\put(30,62){\makebox(0,0)[b]{$\phi_3$}}
\put(150,-2){\makebox(0,0)[t]{$\varpi_3$}}
\put(125.5,40){\thicklines{\vector(3,-2){3}}}

\put(0,0){\thicklines{\qbezier(0,56)(70,56)(110,0)}}
\put(60,32){\makebox(0,0)[b]{$\phi_1$}}
\put(110,-2){\makebox(0,0)[t]{$\varpi_1$}}
\put(83,29){\thicklines{\vector(1,-1){3}}}

\put(0,0){\thicklines{\qbezier(160,0)(160,58)(260,58)}}
\put(220,43){\makebox(0,0)[b]{$\phi_2$}}
\put(164,-2){\makebox(0,0)[t]{$\varpi_2$}}
\put(191,46){\thicklines{\vector(2,1){3}}}

%
}
\end{picture}
\caption{\label{f:SWFs}{A strictly decreasing semi-wavefront solution $\phi_1$ from $\overline{\rho}$, a strictly increasing semi-wavefront solution $\phi_2$ to $\overline{\rho}$, a non-strictly decreasing semi-wavefront solution $\phi_3$ from $\overline{\rho}$.}}
\end{figure}
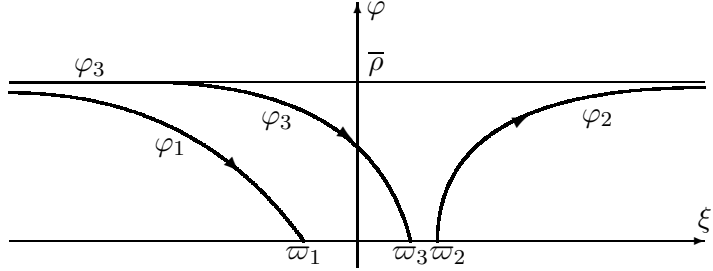

\begin{remark}{\rm Theorem \ref{t:semi} states that under assumption (g) we have semi-wavefront solutions of speed $c$ {\em for every value of} $c$.

However, in the case $g=0$ we only have such solutions {\em for some values of} $c$, see Theorem \ref{t:swfwg}: assuming for instance $h\equiv0$, then semi-wavefront solutions from $\orho$ may only move to the {\em left} and semi-wavefront solutions to $\orho$ may only move to the {\em right}. This is the effect of diffusion, which spreads the wave and then makes the function $t\to\rho(x,t)$ {\em decrease}.

On the contrary, in the case when $D\equiv0$, one expects that the presence of the positive source term $g$ makes the function $t\to\rho(x,t)$ {\em increase}; as a consequence, semi-wavefront solutions from $\orho$ should move to the {\em right} and semi-wavefront solutions to $\orho$ should move to the {\em left}.

In presence of both diffusion and source term, these opposite behaviors tune up and lead to the existence of semi-wavefront solutions for every $c$. The reader can convince her/himself of this tuning in the case $h\equiv0$, $D$ constant and $g(\rho) = 1-\rho$, where explicit solutions are easily constructed.
}
\end{remark}

As depicted in Figure \ref{f:SWFs}, the wave profiles can reach the value $\orho$ for a finite $\xi_0$ and then assume identically the value $\orho$ for $\xi<\xi_0$ (or $\xi>\xi_0$). 

Let $\rho$ be any semi-wavefront solution in Theorem \ref{t:semi} and $\phi$ its wave profile; assume that $\phi$ is defined either in $(-\infty,\varpi)$ or in $(\varpi,+\infty)$. Because of (g), the value $\phi(\varpi)$ is not an equilibrium of \eqref{e:E} (semi-wavefront solutions are strict) and, as a consequence, the value $\phi(\varpi)$ is not a constant solution of \eqref{e:E}. This is a striking difference with the applications considered in \cite{GK}, where semi-wavefront solutions, when they are considered for equations without source terms, have $\phi(\varpi)$ as a solution of the equation.

Definition \ref{d:swf} requires that semi-wavefront solutions possess monotonic wave profiles; clearly, in the statement of Theorem \ref{t:semi} profiles from $\overline{\rho}$ are decreasing while profiles to $\overline{\rho}$ are increasing. Our next result shows that non-strictly monotonic wave profiles, such as $\phi_3$ in Figure \ref{f:SWFs}, can be ruled out by requiring a growth condition on the source term $g$ in a neighborhood of $\orho$; indeed, such condition is sharp.

\begin{theorem}[Characterization of strictly monotonic solutions]\label{t:strictly} Consider equation \eqref{e:E} under assumptions  {\rm (D)} and {\rm (g)}; let $L>0$ and $\rho_1\in[0,\orho)$ be two constants.
 \begin{itemize}
 \item[{(i)}] \ If
\begin{equation}\label{e:pend g}
g(\rho)\le L(\overline \rho-\rho), \qquad \rho \in [\rho_1, \overline \rho],
\end{equation}
 then the wave profile $\varphi$ of every semi-wavefront solution satisfies $\varphi(\xi)<\overline \rho$ for every $\xi$ in its domain.

 \item[{(ii)}] \ If there is $\alpha \in (0,1)$ such that
\begin{equation}\label{e:g sub}
g(\rho)\ge L(\overline \rho-\rho)^{\alpha}, \qquad \rho \in [\rho_1, \overline \rho],
\end{equation}
 then every wave profile $\varphi$ of a semi-wavefront solution satisfies $\varphi(\xi)\equiv \overline \rho$ on $(-\infty, \overline \xi]$ (or on $[\overline \xi,+\infty)$), for some $\overline \xi$ in its domain.
\end{itemize}
\end{theorem}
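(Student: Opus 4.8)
\section*{Proof proposal}

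The plan is to translate the dichotomy ``$\varphi$ stays below $\orho$'' versus ``$\varphi$ attains $\orho$ at a finite abscissa'' into a convergence/divergence statement for an explicit integral built from the reduced unknown $z$, and then to decide that integral by comparison. I would argue for semi-wavefronts from $\orho$ (decreasing profiles); the case of profiles to $\orho$ is symmetric. By Proposition \ref{p:monot} such a profile is strictly decreasing where $0\le\varphi<\orho$, hence invertible with inverse $\xi=\xi(\varphi)$, and $z(\varphi)=D(\varphi)\varphi'(\xi(\varphi))$ solves \eqref{e:zeq} with $z<0$ on $(0,\orho)$ and $z(\orho^-)=0$ (Theorems \ref{t:semiequiv}--\ref{t:semi}). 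Since $\varphi'=z/D$, we have $d\xi/d\varphi=D(\varphi)/z(\varphi)$, so for a fixed $\varphi_0<\orho$,
\[
\overline\xi:=\lim_{\varphi\to\orho^-}\xi(\varphi)=\xi(\varphi_0)-\int_{\varphi_0}^{\orho}\frac{D(\varphi)}{-z(\varphi)}\,d\varphi .
\]
Because $D(\orho)>0$, the profile reaches $\orho$ at the finite point $\overline\xi$ exactly when $\int^{\orho}\frac{d\varphi}{-z(\varphi)}<\infty$, and approaches $\orho$ only as $\xi\to-\infty$ (so $\varphi<\orho$ everywhere) exactly when this integral diverges. Thus everything reduces to estimating $-z$ near $\orho$.

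To this end I would set $u=\orho-\varphi$ and $w(u)=-z(\orho-u)>0$, so that $w(0^+)=0$ and, writing $A(u)=h(\orho-u)-c$ and $b(u)=D(\orho-u)g(\orho-u)\ge0$, equation \eqref{e:zeq} becomes
\[
w'(u)=A(u)+\frac{b(u)}{w(u)}=:G(u,w(u)),\qquad u\in(0,\delta).
\]
The decisive structural fact is that $\partial_w G=-b/w^2\le0$, i.e.\ $G$ is non-increasing in $w$; this makes the singular equation dissipative and yields a clean one-sided comparison at the degenerate endpoint $u=0$. Indeed, if $\overline w$ is a strict supersolution ($\overline w'>G(u,\overline w)$) with $\overline w(0)=0$, then $M:=w-\overline w$ satisfies $M'<0$ wherever $M>0$ while $M(0)=0$, which forces $\{M>0\}=\emptyset$, i.e.\ $w\le\overline w$; symmetrically a strict subsolution vanishing at $0$ lies below $w$.

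In case (i), \eqref{e:pend g} gives $b(u)\le D_0 L\,u$ on $(0,\delta)$ with $D_0=\max_{[\rho_1,\orho]}D$, so $\overline w(u)=Cu$ is a supersolution once $C^2-A_0C-D_0L>0$, where $A_0=\max_{[\rho_1,\orho]}|h-c|$; hence $-z(\varphi)=w(u)\le C(\orho-\varphi)$ and $\int^{\orho}\frac{d\varphi}{-z}\ge\int^{\orho}\frac{d\varphi}{C(\orho-\varphi)}=+\infty$. Therefore $\overline\xi=-\infty$ and $\varphi<\orho$ throughout its domain. In case (ii), \eqref{e:g sub} together with $D(\orho)>0$ gives $b(u)\ge c_0\, u^{\alpha}$ for small $u$ and some $c_0>0$; trying $\underline w(u)=\kappa\, u^{(\alpha+1)/2}$ and using $(\alpha-1)/2<0$, the subsolution inequality reduces to $\big(\tfrac{c_0}{\kappa}-\kappa\tfrac{\alpha+1}{2}\big)u^{(\alpha-1)/2}\ge A_0$, which holds for small $u$ as soon as $\kappa<\sqrt{2c_0/(\alpha+1)}$. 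Hence $-z(\varphi)=w(u)\ge\kappa(\orho-\varphi)^{(\alpha+1)/2}$ with exponent $(\alpha+1)/2<1$, so $\int^{\orho}\frac{d\varphi}{-z}\le\int^{\orho}\frac{d\varphi}{\kappa(\orho-\varphi)^{(\alpha+1)/2}}<\infty$. Thus $\overline\xi$ is finite, $\varphi(\overline\xi)=\orho$, and by monotonicity $\varphi\equiv\orho$ on $(-\infty,\overline\xi]$.

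The main obstacle is the comparison at the singular endpoint $u=0$, where $G(u,\cdot)$ blows up and the solution and both barriers vanish, so neither Lipschitz dependence nor a standard initial condition is available. I expect the dissipativity $\partial_w G\le0$ to be exactly what rescues the argument: it turns the comparison into the elementary sign analysis of $M=w-\overline w$ (resp.\ $w-\underline w$) sketched above, requiring only continuity and $M(0)=0$. The remaining care is quantitative, namely choosing $\delta$ and the constants $C$ and $\kappa$ so that the super/sub-solution inequalities hold on a full right neighborhood of $0$ using only the one-sided bounds on $g$ from \eqref{e:pend g}--\eqref{e:g sub} near $\orho$; this is where the sharp barrier construction of Sections \ref{s:comp}--\ref{s:first solution} (following \cite{MMconv}) is needed. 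The borderline exponent $\alpha=1$, for which $(\alpha+1)/2=1$ and the integral diverges logarithmically, shows that the threshold separating \eqref{e:pend g} and \eqref{e:g sub} is sharp.
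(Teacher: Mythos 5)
Your proposal is correct, and its skeleton is exactly that of the paper: reduce to the singular first-order unknown $z$ of problem \eqref{e:fo}, bound $-z$ near $\orho$ from above by a linear barrier in case \emph{(i)} and from below by a power barrier in case \emph{(ii)}, and let the convergence or divergence of $\int^{\orho}D(\varphi)/(-z(\varphi))\,d\varphi$ decide whether $\overline\xi$ is finite (the constants even match: your condition $C^{2}-A_{0}C-D_{0}L>0$ is the paper's \eqref{e:a}, and $\kappa<\sqrt{2c_{0}/(\alpha+1)}$ is \eqref{e:Deltak} specialized to $\beta=(\alpha+1)/2$). Where you genuinely depart is the comparison mechanism at the degenerate endpoint. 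The paper never compares two functions that both vanish there: its Lemma \ref{l:keyif} is anchored at a point where strict ordering already holds, so the paper manufactures such anchoring with offset barriers --- the upper-solutions $\eta_n(\varphi)=a(\varphi-\orho)-\frac1n$ with $\eta_n(\orho)=-\frac1n<0=z(\orho^{\,-})$ in case \emph{(i)}, and the lower-solutions $\omega_n$ vanishing at $\orho-\frac1n$ in case \emph{(ii)} --- followed by a monotone passage to the limit $n\to\infty$. You instead compare $w=-z$ directly with a barrier vanishing at the singular point, exploiting that $G(u,w)=A(u)+b(u)/w$ is non-increasing in $w$ together with strictness of the barrier inequality: wherever $M=w-\overline w>0$ one has $M'<0$, and since $M(0)=0$ a first-crossing argument (take $u_1=\sup\{u<u_0:\ M(u)\le 0\}$, note $M$ strictly decreases on $(u_1,u_0]$) forces $M\le0$. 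This is sound --- it needs no Lipschitz continuity at $w=0$, only $b\ge0$ and continuity of $w$ up to $u=0$, which follows from $z(\orho^{\,-})=0$ --- and it buys the elimination of both $1/n$-families and the monotone-convergence limit steps; in effect it is the proof of Lemma \ref{l:keyif} redone with the anchoring moved to the singular endpoint, which works precisely because the barrier inequality is strict. Two smaller divergences: you use the borderline exponent $(\alpha+1)/2$ where the paper takes $\beta\in\left(\frac{\alpha+1}{2},1\right)$, which is legitimate because you only need the subsolution inequality on a small right neighborhood of $u=0$, where $u^{(\alpha-1)/2}\to\infty$ absorbs the constant $A_{0}$; and by working locally near $\orho$ you dispense with the paper's normalization $\rho_1=0$. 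The only step you leave at sketch level is the first-crossing argument above, and it is routine to fill in.
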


In our last result we only require $D\in C[0, \orho]\cap C^1(0, \orho)$ and assume ({\^D}); this means that we allow $D$ to have infinite slope at $0$. The statement below is analogous to those of Theorems \ref{t:fosemi} and \ref{t:semi}; notice that under ({\^D}${}_1$) we formally deduce $c^*=\infty$ in \eqref{e:cs}, which would suggest the solvability of \eqref{e:fo} for any $c$ and, moreover, $z(0^+)<0$. This is indeed the case. For brevity we only deal with the case of profiles from $\orho$; the case of profiles to $\orho$ is completely analogous.

\begin{theorem}\label{t:Dinf}
Assume {\rm ({\^D}${}$)} and {\rm (g)}; then, problem \eqref{e:fo} is uniquely solvable for every $c\in\R$ and $z(0^+)<0$ for every $c$. In turn, equation \eqref{e:E} has a strict classical semi-wavefront solution from $\overline \rho$ for every $c$; solutions are unique up to shifts and their wave profiles are of class $C^2$ in $(-\infty,\varpi)$. Moreover, 
\begin{equation}\label{e:lastissimo}
\hbox{in case {\rm ({\^D}${}_0$)}:}  \lim_{\xi \to \varpi^-}\varphi^{\, \prime}(\xi)\in(-\infty,0),
\quad
\hbox{ in case {\rm ({\^D}${}_1$)}:} 
\lim_{\xi \to \varpi^-}\varphi^{\, \prime}(\xi)=-\infty.
\end{equation}
\end{theorem}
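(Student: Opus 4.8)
The plan is to follow the route developed for case~(D) in Sections~\ref{s:comp}--\ref{s:equiv}, adapting each step to the infinite-slope degeneracy via the comparison technique of \cite{MMconv}. The behavior of \eqref{e:zeq} near the endpoint $\orho$ does not see the degeneracy of $D$ at $0$, since there $D(\orho)>0$ and $g(\orho)=0$; hence the analysis of the singular point $(\orho,0)$, the existence of a unique trajectory leaving it into $\{z<0\}$, and its continuation over any compact subinterval of $(0,\orho)$ are obtained exactly as in the proof of Theorem~\ref{t:fosemi}, reusing the upper- and lower-solutions built there near $\orho$. In this way the solution $z$ extends to all of $(0,\orho)$ with $z<0$; integrating $\dot v\le c-h(\varphi)\le C$, where $v:=-z$ and $C:=\max_{[0,\orho]}|c-h|$, it stays bounded near $0$, so that, as in Theorem~\ref{t:fosemi}, $z$ admits a limit $z_0:=z(0^+)\in(-\infty,0]$. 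The whole novelty is thus concentrated at $\varphi=0$ and rests on a single feature shared by {\rm ({\^D}${}_0$)} and {\rm ({\^D}${}_1$)}: in both cases $D(\varphi)/\varphi\to+\infty$ as $\varphi\to0^+$ (trivially when $D(0)>0$, and because $\dot D(0)=\infty$ when $D(0)=0$). This is exactly what fails under \D{1} and \D{2}, where $D(\varphi)/\varphi$ tends to a finite limit and the threshold $c^*$ appears; it is the rigorous counterpart of the formal identity $c^*=\infty$.

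The core step is the strict inequality $z_0<0$ for every $c$. I argue by contradiction, assuming $z_0=0$, i.e. $v(0^+)=0$, and recalling that \eqref{e:zeq} reads $\dot v=(c-h(\varphi))-D(\varphi)g(\varphi)/v$. From $\dot v\le C$ and $v(0^+)=0$ one gets $v(\varphi)\le C\varphi$ near $0$, whence $D(\varphi)g(\varphi)/v(\varphi)\ge (g(\varphi)/C)\,(D(\varphi)/\varphi)\to+\infty$, using $g(0)>0$ and $D(\varphi)/\varphi\to+\infty$. Therefore $\dot v\to-\infty$, so $v$ is strictly decreasing on some interval $(0,\varphi_2)$; but then $v(0^+)\ge v(\varphi_2/2)>0$, a contradiction. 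Hence $z_0<0$. Once $z$ is known to stay bounded away from $0$ near $\varphi=0$, the term $D(\varphi)g(\varphi)/z$ is bounded there, so \eqref{e:zeq} is non-singular at the left endpoint and $z$ reaches $\varphi=0$ continuously with $z_0<0$; unique solvability of \eqref{e:fo} for every $c$ then follows as in Theorem~\ref{t:fosemi}, the constraint $z<0$ on $(0,\orho)$ being automatic because $z_0<0$.

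The passage from \eqref{e:fo} to semi-wavefronts is the analogue of Theorem~\ref{t:semiequiv} and of Section~\ref{s:equiv}, and is insensitive to the type of degeneracy: Proposition~\ref{p:monot} gives strict monotonicity where $0\le\varphi<\orho$, the substitution $z=D(\varphi)\varphi'$ turns \eqref{e:tws} into \eqref{e:zeq}, and the profile is recovered by inverting $\xi'(\varphi)=D(\varphi)/z(\varphi)$. Since $z_0<0$, the ratio $D/z$ is bounded near $0$, so $\int_0 D/z\,d\varphi$ converges and $\varphi$ reaches $0$ at a finite abscissa $\varpi$; near $\orho$ instead $z\to0^-$ forces $\int^{\orho}D/z\,d\varphi$ to diverge, so $\varphi\to\orho$ as $\xi\to-\infty$. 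As $g(0)>0$, the value $0$ is not an equilibrium, so the profile cannot be prolonged and the semi-wavefront is strict, defined on $(-\infty,\varpi)$ and unique up to shifts; it is $C^2$ there by bootstrapping \eqref{e:tws}, since $\varphi\in(0,\orho)$ keeps $D(\varphi)>0$.

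Finally, the slope at $\varpi$ is read off from $\varphi'(\xi)=z(\varphi)/D(\varphi)$ as $\xi\to\varpi^-$, i.e. $\varphi\to0^+$: in case {\rm ({\^D}${}_0$)} it tends to $z_0/D(0)\in(-\infty,0)$, whereas in case {\rm ({\^D}${}_1$)}, with $z_0<0$ and $D(\varphi)\to0^+$, it tends to $-\infty$; this is \eqref{e:lastissimo}. I expect the main obstacle to lie not in this final bookkeeping but in transferring the comparison machinery of Sections~\ref{s:comp}--\ref{s:first solution} to the regime $\dot D(0)=\infty$: the linear-at-$0$ barriers used under \D{1}, governed by the roots \eqref{e:r_pm} and hence requiring a finite $\dot D(0)$, degenerate here (their discriminant becomes $-\infty$) and must be replaced by barriers compatible with the super-linear vanishing of $D$. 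The contradiction argument above is precisely what certifies that no linear profile $z\sim r\varphi$ can survive at the degenerate endpoint, which is why $z$ stays bounded away from $0$ and the construction goes through.
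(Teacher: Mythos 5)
Your central idea --- deriving $z(0^+)<0$ by contradiction from the single property $D(\varphi)/\varphi\to+\infty$, shared by {({\^D}${}_0$)} and {({\^D}${}_1$)} --- is correct and genuinely different from the paper, which obtains the strict negativity of $z(0)$ only as a by-product of two separate constructions. The chain $\dot v\le C$ and $v(0^+)=0$ imply $v(\varphi)\le C\varphi$, hence $D(\varphi)g(\varphi)/v(\varphi)\to+\infty$, hence $\dot v\to-\infty$ and $v$ decreasing near $0$, contradicting $v(0^+)=0$, is sound; it would give a unified a posteriori proof that \emph{every} solution of \eqref{e:fo} has $z(0^+)<0$, and it correctly explains why no analogue of the threshold $c^*$ survives.

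The genuine gap is upstream, in the existence of a solution of \eqref{e:fo} at all. You assert that the trajectory reaching $(\orho,0)$ with $z<0$ on all of $(0,\orho)$ is "obtained exactly as in the proof of Theorem \ref{t:fosemi}"; but that proof hinges on the threshold solution $z_{c^*}$ as a strict negative upper barrier (estimates \eqref{e:znz*} and \eqref{e:ozeta<0}), which is what guarantees that the increasing limit $\overline z$ of the final-value approximations $z_n$, with $z_n(\orho)=-1/n$, stays strictly negative in the interior, so that the monotone-convergence passage from \eqref{e:Izn} to \eqref{e:Iz} makes sense. Under ({\^D}) there is no $c^*$ and no such barrier, and your contradiction argument cannot substitute for it: it presupposes a solution already defined with $z<0$ on all of $(0,\orho)$, and says nothing about whether $\overline z$ vanishes at an interior point. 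This is exactly the hole the paper fills with case-specific constructions: under {({\^D}${}_0$)} it sandwiches $D$ between positive linear diffusivities $D_1<D<D_2$ and uses the corresponding solutions (available by Theorem \ref{t:fosemi} under {\rm \D{0}}) as barriers for a monotone limit of final-value problems; under {({\^D}${}_1$)} it builds an explicit lower-solution $\omega$ near $0$, with $\omega(0)<0$ and $\omega(\phi_0)=0$, from the damped oscillator $u''+Mu'+\frac{N^2}{4}u=0$ (possible precisely because $D(\varphi)g(\varphi)/\varphi\to\infty$ allows $N>M$), and then shoots in the initial value $z(0)=\alpha$ between an undershooting and an overshooting trajectory. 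Your scheme could be repaired without barriers --- e.g., the identity $\frac{d}{d\varphi}z_n^2=2z_n\left(h(\varphi)-c\right)-2D(\varphi)g(\varphi)$ shows that on a compact subinterval where $Dg\ge\delta_0>0$ a trajectory too close to $0$ would be forced to vanish before $\orho$, which Lemma \ref{l:iandfvp} forbids, so the $z_n$ are uniformly bounded away from zero on compacta --- but some such argument must be supplied; as written the existence step fails. A minor slip elsewhere: your claim that $z\to0^-$ forces $\int^{\orho}D/z\,d\varphi$ to diverge is false in general (under \eqref{e:g sub} the profile attains $\orho$ at a finite $\overline\xi$, Theorem \ref{t:strictly}\emph{(ii)}); the proof of Theorem \ref{t:semiequiv} instead continues the profile by the constant $\orho$, which does not affect the conclusion.
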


Results analogous to those stated in Theorem \ref{t:semi}{\em (iii)} and Theorem \ref{t:strictly} still hold under assumption {\rm ({\^D}${}$)}.

For simplicity, in the following sections we shorten the expression \lq\lq $\rho$ is a semi-wavefront solution of \eqref{e:E} from $\overline{\rho}$ with wave profile $\phi$\rq\rq\ by writing \lq\lq $\phi$ is a semi-wavefront of \eqref{e:E} from $\overline{\rho}$\,\rq\rq\ and so on.

\section{An example}\label{s:example}
\setcounter{equation}{0}

Consider the model of crowd dynamics discussed in the Introduction, namely
\begin{equation}\label{e:model_Tosin}
\rho_t + \left(\rho v(\rho)\right)_x = \left(D(\rho)\rho_x\right)_x.
\end{equation}
We assume, as it is often usual in this modeling, that $f(\rho) = \rho v(\rho)$ is a strictly concave function; in particular this assumption is satisfied if $v$ and $D$ are given by \eqref{e:v1}. In such a case, $h(\rho) =v(\rho)+\rho v^{\, \prime}(\rho)$ and $h(\overline{\rho})=-\frac{\gamma \overline v}{\overline{\rho}}$; as in Theorem \ref{t:swfwg}, we only focus on $\orho$. If (D) holds, by Theorem \ref{t:swfwg} and the comment following it we deduce that equation \eqref{e:model_Tosin} has
\begin{enumerate}[{\em (i)}]

\item a semi-wavefront solution from $\overline \rho$ (to $\overline \rho$) for every $c \le h(\overline{\rho})$ (resp., $c > h(\overline{\rho})$);

\item no semi-wavefront solution from $\overline \rho$ (to $\overline \rho$) if $c > h(\overline{\rho})$ (resp., $c \le h(\overline{\rho})$).
\end{enumerate}
In all cases the corresponding profile $\phi$ is a solution in a half-line $I$ of
\begin{equation}\label{e:CD}
\left(D(\phi)\phi^{\prime}\right)^{\prime}+\left(c\phi-\phi v(\phi) \right)^{\prime}=0.
\end{equation}
Moreover, by \cite[Theorem 5.2]{GK} we have that $\phi(\xi)\in(0,\orho)$ for every $\xi\in(-\infty,\varpi)$.
 
Now, we show some additional results about \eqref{e:model_Tosin}. 

\begin{lemma}\label{l:Tosin1}
Let $c\le h(\orho)$ and $\phi$ be a classical semi-wavefront profile from $\orho$ for \eqref{e:model_Tosin}. Then, there exists $\varpi\in\R$ such that $\varphi(\xi)\to 0$ as $\xi\to\varpi^-$; moreover,
 \begin{equation}\label{e:slopes-Tosin}
\lim_{\xi\to\varpi^-}\varphi^{\, \prime}(\xi) =
 \left\{
 \begin{array}{ll}
 \lambda<0 & \text{ if \D{0} holds, }
 \\
 -\infty & \text{ if \D{1} or \D{2} hold, }
 \end{array}
 \right.
 \end{equation}
for some real number $\lambda$. 
\end{lemma}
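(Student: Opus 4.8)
The plan is to take advantage of the fact that \eqref{e:model_Tosin} carries no source term ($g\equiv 0$), which lets us integrate the profile equation \eqref{e:CD} once. Integrating \eqref{e:CD} gives
\[
D(\phi)\phi^{\,\prime} + c\phi - \phi v(\phi) = K
\]
for a constant $K$. Since $\phi$ is monotone and tends to $\orho$ as $\xi\to-\infty$, its derivative cannot approach a nonzero limit (otherwise $\phi$ would be unbounded), so $\phi^{\,\prime}(\xi)\to 0$; recalling $v(\orho)=0$, this forces $K=c\orho$. Setting $P(\phi):=c(\orho-\phi)+\phi v(\phi)$, the profile thus obeys the separable first-order relation $D(\phi)\phi^{\,\prime}=P(\phi)$, which will drive the whole argument.

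Next I would pin down the sign of $P$ on $[0,\orho]$. As $f$ is strictly concave, $h=f^{\,\prime}$ is strictly decreasing, hence $P^{\,\prime}(\phi)=h(\phi)-c$ is strictly decreasing; since $P^{\,\prime}(\orho)=h(\orho)-c\ge 0$ by the hypothesis $c\le h(\orho)$, we get $P^{\,\prime}>0$ on $[0,\orho)$. Therefore $P$ is strictly increasing with $P(\orho)=0$, so $P(\phi)<0$ for all $\phi\in[0,\orho)$ --- consistent with $\phi$ decreasing --- and in particular $P(0)=c\orho<0$.

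To locate $\varpi$ I would separate variables. Fixing $\xi_*$ with $\phi(\xi_*)=\phi_*\in(0,\orho)$, one has
\[
\xi-\xi_* = \int_{\phi_*}^{\phi(\xi)}\frac{D(s)}{P(s)}\,ds .
\]
Since $P$ does not vanish on $[0,\phi_*]$, the integrand $D/P$ is continuous and bounded there (it is a negative constant at $0$ under \D{0} and vanishes at $0$ under \D{1} or \D{2}), so the integral is proper and converges as $\phi(\xi)\to 0^+$. This yields $\xi\to\varpi:=\xi_*-\int_0^{\phi_*}D(s)/P(s)\,ds<+\infty$. Moreover $\phi$ cannot level off at some $\ell>0$, because there $\phi^{\,\prime}=P(\ell)/D(\ell)$ is bounded away from $0$; combined with $\phi\in(0,\orho)$ on $(-\infty,\varpi)$ from \cite[Theorem 5.2]{GK}, this shows $\phi(\xi)\to 0$ as $\xi\to\varpi^-$.

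The slope is then immediate from the first-order relation: as $\phi^{\,\prime}(\xi)=P(\phi(\xi))/D(\phi(\xi))$ and $\phi(\xi)\to 0^+$,
\[
\lim_{\xi\to\varpi^-}\phi^{\,\prime}(\xi)=\lim_{\phi\to 0^+}\frac{P(\phi)}{D(\phi)} .
\]
Under \D{0} this equals $\lambda:=c\orho/D(0)<0$; under \D{1} or \D{2} the numerator tends to $c\orho<0$ while $D(\phi)\to 0^+$, giving $-\infty$, which is exactly \eqref{e:slopes-Tosin}. I expect the one genuinely delicate point to be establishing both that $\varpi$ is \emph{finite} and that $\phi$ actually attains $0$ (rather than converging to a positive limit); both are secured by the separation of variables together with the non-vanishing of $P$ near $0$.
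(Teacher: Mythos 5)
Your proof is correct and follows essentially the same route as the paper: integrating \eqref{e:CD} once, using $\varphi'\to 0$ at $-\infty$ to fix the constant, yields the first-order relation $D(\varphi)\varphi'=c(\orho-\varphi)+f(\varphi)$, whose strict negativity on $[0,\orho)$ (from the strict concavity of $f$ and $c\le h(\orho)$) gives the monotonicity, the limit $D(\varphi)\varphi'\to c\orho<0$, and hence \eqref{e:slopes-Tosin}. The only minor difference is that you obtain the finiteness of $\varpi$ directly by separation of variables (properness of $\int_0^{\phi_*}D(s)/P(s)\,ds$, since $P$ does not vanish on $[0,\phi_*]$), whereas the paper excludes $\xi_1=+\infty$ by invoking Proposition \ref{p:wfTosinI} (every global traveling wave of \eqref{e:model_Tosin} is increasing); both arguments are sound, and yours is slightly more self-contained on this point.
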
 
\begin{proof}
We integrate \eqref{e:CD} in $[\xi_0,\xi]\subset I$ and find
\begin{equation}\label{e:cd}
D\left(\phi(\xi)\right)\phi^{\prime}(\xi) -D\left(\phi(\xi_0)\right)\phi^{\prime}(\xi_0) + c\left[\phi(\xi)-\phi(\xi_0) \right] - f\left(\phi(\xi)\right) + f\left(\phi(\xi_0)\right)=0.
\end{equation}
By (D) and the behavior of $\phi$ at $-\infty$ we deduce that $\phi^{\prime}$ has finite limit at $-\infty$;  the boundedness of $\phi$ implies that $\phi^{\prime}(\xi_0) \to 0$ as $\xi_0 \to -\infty$. Therefore, if we pass to the limit in \eqref{e:cd} for $\xi_0 \to -\infty$, we obtain
\begin{equation}\label{e:123}
D\left(\phi(\xi)\right)\phi^{\prime}(\xi) =\left(\orho-\phi(\xi) \right)\left( c-\frac{-f(\phi(\xi)}{\orho-\phi(\xi)}\right), \quad \xi \in I.
\end{equation}
Denote $I=(-\infty, \xi_1)$ for some $\xi_1$. The case $\xi_1=\infty$ is excluded since every global traveling wave solution of equation \eqref{e:model_Tosin} is easily seen to be increasing by Proposition \ref{p:wfTosinI}; hence $\xi_1 $ is a real value. Moreover, when considering  condition (D), the strict concavity of $f$ and the estimate $c\le h(\orho)$, we can infer  from \eqref{e:123} that $\phi^{\, \prime}<0$ in $I$, so that $\lim_{\xi \to \xi_1^-} \phi(\xi)$ exists and it is necessarily $0$; it implies that $\xi_1=\varpi$. Finally, again from \eqref{e:123}, we obtain that
\begin{equation}\label{e:lim0}
\lim_{\xi \to \varpi^-}D\left(\phi(\xi)\right)\phi^{\prime}(\xi)=\lim_{\xi \to \varpi^-} c\left[\orho- \phi(\xi) \right]+f\left(\phi(\xi)\right)= c\orho\le f^{\prime}(\orho)\orho<0
\end{equation}
and claim \eqref{e:slopes-Tosin} follows from \eqref{e:lim0}.
\end{proof}

We notice that if $D$ is given by \eqref{e:v1}, then \D{2} holds and $\phi'(\xi)\to-\infty$ as $\xi\to\varpi^-$ by Lemma \ref{l:Tosin1}.

Second, we discuss the problem of the global existence (in the weak sense) of semi-wavefronts. This issue is crucial for the case $g\ne0$; the corresponding discussion is postponed to Section \ref{sec:pasting}. By Proposition \ref{p:wfTosinI}, equation \eqref{e:model_Tosin} admits classical wavefront solutions, which are always increasing; their presence makes the case of semi-wavefronts from $\orho$ different from that of semi-wavefronts to $\orho$. 

\begin{lemma}
If $c\notin \left(h(\orho,0)\right]$, then no strict classical semi-wavefront profile from or to $\orho$ can be extended to $\R$ by $0$ as a weak solution of \eqref{e:CD}.

If $c\in\left(h(\orho),0\right]$, then wavefront profiles exists. In this case, if $c\ne0$, they are classical and strictly monotone; if $c=0$, then the wavefront profile is weak, $\phi(\xi)=0$ if $\xi\in(-\infty,\xi_0]$ and $\phi'(\xi_0^+)=\infty$ for some $\xi_0\in\R$. 
\end{lemma}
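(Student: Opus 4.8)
The plan is to reduce everything to one structural fact. With $g=0$ the weak formulation in Definition~\ref{d:tws} reads $\int_\R J\,\psi'\,d\xi=0$ for all $\psi\in C_0^\infty(\R)$, where $J:=D(\phi)\phi'-f(\phi)+c\phi$ is the flux of \eqref{e:CD}. Consequently, a profile that solves \eqref{e:CD} classically on each of two half-lines meeting at $\varpi$ is a global weak solution precisely when $J$ does not jump at $\varpi$. Throughout I use $f(0)=f(\orho)=0$ (the latter since $v(\orho)=0$) and the strict concavity of $f$, which gives $f>0$ on $(0,\orho)$ and $h(\orho)=f'(\orho)<0$. The two values of $J$ that matter are immediate: on a semi-wavefront from or to $\orho$ the profile tends to $\orho$ with vanishing slope at its infinite endpoint (Lemma~\ref{l:Tosin1} and its mirror image), so integrating \eqref{e:CD} once and letting $\phi\to\orho$ gives $J\equiv c\orho$, which is exactly \eqref{e:123}; on the constant state $\phi\equiv0$ one has $J=-f(0)=0$. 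Hence gluing a semi-wavefront to the flat state $0$ yields a weak solution if and only if $c\orho=0$, i.e.\ $c=0$.

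This already settles the first assertion. If $c\notin(h(\orho),0]$ then $c\ne0$, and by the existence dichotomy of Theorem~\ref{t:swfwg} exactly one semi-wavefront is available: from $\orho$ when $c\le h(\orho)$, and to $\orho$ when $c>0$. In either case Lemma~\ref{l:Tosin1} (respectively its mirror image) shows that the profile attains the value $0$ at a finite $\varpi$, so the only candidate global extension sets $\phi\equiv0$ on the complementary half-line. The junction condition $c\orho=0$ then fails, so no such extension is a weak solution of \eqref{e:CD}; in fact the same computation shows the extension is weak only for $c=0$.

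For the second assertion I would invoke Proposition~\ref{p:wfTosinI} with $\rho^+=\orho$. Writing $\Phi(\rho):=\bigl(f(\orho)-f(\rho)\bigr)/(\orho-\rho)$ for the secant slope, strict concavity gives $\Phi'(\rho)=\bigl(f(\orho)-f(\rho)-f'(\rho)(\orho-\rho)\bigr)/(\orho-\rho)^2<0$, while $\Phi(0)=0$ and $\Phi(\rho)\to h(\orho)$ as $\rho\to\orho^-$ by l'H\^opital's rule. Thus $\Phi$ maps $[0,\orho)$ strictly decreasingly and bijectively onto $(h(\orho),0]$, so every $c\in(h(\orho),0]$ is the speed of a unique wavefront joining some $\rho^-(c)\in[0,\orho)$ to $\orho$, with $\rho^-(c)=0$ if and only if $c=0$. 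Proposition~\ref{p:wfTosinI} then furnishes a classical, strictly monotone profile when $\rho^-(c)>0$, that is when $c\ne0$, and, when $c=0$, a weak profile with $\phi\equiv0$ on $(-\infty,\xi_0]$ and $\phi'(\xi_0^+)=\infty$ under \D{2}, exactly as stated.

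The flux computation and the gluing criterion are routine. The step demanding care is the identification of the speed range: one must check that the secant slope $\Phi$ sweeps out \emph{exactly} $(h(\orho),0]$, which follows from its monotonicity together with the two endpoint values, and then read off Proposition~\ref{p:wfTosinI} correctly at the degenerate end $\rho^-=0$, singling out $c=0$ as the unique speed producing the weak, infinite-slope wavefront.
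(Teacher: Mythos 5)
Your proposal is correct and takes essentially the same route as the paper: your flux-jump criterion at $\varpi$ is exactly the paper's condition $\lim_{\xi\to\varpi^{\mp}}D(\phi)\phi'=0$ obtained by testing with $\psi(\varpi)\ne0$, the value $c\orho$ of the conserved flux is precisely \eqref{e:123}--\eqref{e:lim0}, and the second assertion is read off from Proposition \ref{p:wfTosinI} via the secant relation $c=-f(\rho^-)/(\orho-\rho^-)$, whose unique solvability you merely make explicit through the monotonicity of $\Phi$. Your remark that $\phi'(\xi_0^+)=\infty$ at $c=0$ holds under \D{2} is, if anything, more precise than the lemma's own wording, which tacitly has the model \eqref{e:v1}, hence \D{2}, in mind.
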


\begin{proof}
We deal separately with the cases of semi-wavefronts from and to $\orho$. Recall Theorem \ref{t:swfwg}.

\begin{enumerate}[{\em (i)}]
 \item Let $\phi$ be a classical semi-wavefront profile {\em from} $ \orho$, with wave speed $c$, in the half-line $(-\infty, \varpi)$; then we have $c\le h( \orho)$. We claim that the extension $\tilde \phi$ of $\phi$ with $0$ to $[\varpi, \infty)$ is not a global (weak) solution of \eqref{e:CD}. Indeed, clearly $\tilde \phi$ is a solution in $\R\setminus\{\varpi\}$; by taking $I=\mathbb{R}$ in \eqref{d:tws} and a test function $\psi$ with $\psi(\varpi)\ne 0$, it is easy to see that $\tilde\phi$ is a weak solution of \eqref{e:CD} if and only if
$\lim_{\xi \to \varpi^-}D\left(\phi(\xi)\right)\phi^{\prime}(\xi)=0$.
This condition is never satisfied because of \eqref{e:lim0}. This proves our claim.
 
\item Let $\phi$ be a classical semi-wavefront profile {\em to} $\orho$; hence $c> h(\orho)=f^{\prime}(\orho)$. 
  
  If $c \in \left(f^{\prime}(\orho), 0\right]$, then there is a unique $\rho^- \in [0, \orho)$ such that
  $ c=\frac{-f(\rho^-)}{\orho -\rho^-}$.
 By Proposition \ref{p:wfTosinI}, equation \eqref{e:model_Tosin} has a wavefront solution with speed $c$; by uniqueness, the profile $\phi$ coincides with that wavefront and hence it is a global traveling-wave solution. 
     
If $c> 0$, then $\phi$ is a strict semi-wavefront solution to $\orho$, hence it is defined in some half-line $(\varpi, \infty)$; moreover, arguing as before we have that
$\lim_{\xi \to \varpi^+}D\left(\phi(\xi)\right)\phi^{\prime}(\xi)=c\orho>0$,
so, again, $\phi$ is not extendable in a weak sense by $0$.
 \end{enumerate}
\end{proof}


Again for the same model, we now consider the case when $g\ne0$; more precisely we focus on the case $g(\rho) = L\cdot(\orho-\rho)$. In the case $D$ is given by \eqref{e:v1}, condition \eqref{e:threshold} (see Theorem \ref{t:fosemi}) can be written as
\begin{equation}\label{e:interval}
\overline{v}\le c^*\le \overline{v} + v^*,
\end{equation}
where $v^*>0$ satisfies 
$
(v^*)^2 = 4L\delta \bar v\gamma\rho_0^{-2}(\orho-\rho_0)\, {\rm e}^{-\gamma\left(\frac{1}{\rho_0}-\frac{1}{\orho}\right)}$ for $\rho_0=\gamma/2+\orho - \sqrt{\left(\gamma/2\right)^2+\orho^2}$.

Even if semi-wavefront solutions $\rho(x,t)$ are important in several applications \cite{GK}, in the framework of collective movements their interest is limited by the fact that they are only defined in half-planes $x-ct\gtrless\varpi$. However, while referring to Section \ref{sec:pasting} for a discussion of the non-existence of global traveling-wave solutions, we provide here a simple application. Consider the initial-boundary value problem
\begin{equation}\label{e:IBVP}
\left\{
\begin{array}{ll}
\rho_t+\left(\rho v(\rho)\right)_x = \left(D(\rho)\rho_x\right)_x + g(\rho),& x<0, t>0,
\\
\rho(0,t)=\rho_b(t)&t>0,
\\
\rho(x,0)= \rho_0(x)&x<0,
\end{array}
\right.
\end{equation}
with $0\le \rho_0(x), \rho_b(t)\le\orho$ for every $x<0$ and $t>0$. Problem \eqref{e:IBVP} models a pedestrian motion in the half-line $x<0$, with initial datum $\rho_0$; pedestrians enter either through the $x$ axis with rate $g$ or through the boundary $x=0$ because of the term $\rho_b$. By Theorem \ref{t:semi}, we fix any $c>0$, denote by $\phi$ the correspondng semi-wavefront profile from $\orho$ and shift it so that it is defined in $I=(-\infty,\varpi]$ with $\varpi\ge0$. Then, we define $\rho(x,t)=\phi(x-ct)$, for $x<0$ and $t>0$; this definition makes sense because $c>0$. The function $\rho$ solves \eqref{e:IBVP} in the special case
$\rho_0(x):=\phi(x)$, $\rho_b(t)= \phi(-ct)$. In particular, according to Theorem \ref{t:strictly}, the road is completely filled in finite (or infinite) time depending on the source term $g$.


\section{Comparison-type techniques}\label{s:comp}
\setcounter{equation}{0}

In this section we prove some results on the comparison-type techniques that we use in the following; we point out that the differentiability of $D$ is not required here. For $c\in\R$, $z_0 \ne 0$, $a\in [0, \overline \rho)$ and $b\in (0, \overline \rho]$, we introduce the following \emph{initial}- and \emph{final}-value problems corresponding to \eqref{e:zeq}:
\begin{equation}\label{e:vpt}
\left\{
\begin{array}{l}
\dot z (\varphi)= h(\varphi)-c-\frac{D(\varphi)g(\varphi)}{z(\varphi)},\ \phi>a,\\
z(a) = z_0,
\end{array}
\right.
\quad
\left\{
\begin{array}{l}
\dot z (\varphi)= h(\varphi)-c-\frac{D(\varphi)g(\varphi)}{z(\varphi)},\ \phi<b,\\
z(b)=z_0.
\end{array}
\right.
\end{equation}
In the following, we slightly simplify the limit notation used in the Introduction (see Theorem \ref{t:semiequiv}) for boundary values of solutions to singular equations as that in \eqref{e:vpt}: for instance, in the case $z_0=0$ we briefly write $z(a) = 0$ instead of $z(a^+) = 0$. Analogously, we use the notation $\dot{z}(0)$ for the right derivative of the function $z$ at $0$.

In Lemma \ref{l:iandfvp} we discuss the existence and uniqueness of solutions to both problems in \eqref{e:vpt} while in Lemma \ref{l:keyif} we show that the existence of a strict lower- or upper-solution for equation \eqref{e:zeq} determines an invariant region for the solutions of either $\eqref{e:vpt}_1$ or $\eqref{e:vpt}_2$.

\begin{lemma}\label{l:iandfvp} Consider the problems in \eqref{e:vpt}, for the above values of $c$, $z_0$, $a$ and $b$.
\begin{enumerate}[(1)]
\item The initial-value problem $\eqref{e:vpt}_1$ has a unique solution $z_a(\varphi)$ defined in its right maximal-existence interval $[a,\beta)$. In particular, $z_a(\beta)$ is a real value and $z_a(\beta)=0$ if $\beta< \overline \rho$.

\item The final-value problem $\eqref{e:vpt}_2$ has a unique solution $z_b(\varphi)$ defined on all $(0, b]$.
\end{enumerate}
\end{lemma}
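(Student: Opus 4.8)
The plan is to treat both problems as regular first-order ODEs away from the singular line $\{z=0\}$ and to exploit the sign of the singular term to control the solutions as they approach it. Set $F(\varphi,z):=h(\varphi)-c-\frac{D(\varphi)g(\varphi)}{z}$; by {\rm (D)} and {\rm (g)} one has $D(\varphi)g(\varphi)\ge0$ on $[0,\orho]$, so $\frac{D(\varphi)g(\varphi)}{z}$ always carries the sign of $z$. Since $F$ is continuous in $(\varphi,z)$ and locally Lipschitz in $z$ on $[0,\orho]\times(\R\setminus\{0\})$, and $z_0\ne0$, the Cauchy--Lipschitz theorem gives a unique local solution through the prescribed datum, which I extend to its maximal interval; on that interval the orbit stays off $\{z=0\}$, so uniqueness is not affected by the singularity. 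Writing $K:=\max_{[0,\orho]}|h|+|c|$, the one-sided estimates $\dot z\le h-c\le K$ for $z>0$ and $\dot z\ge h-c\ge-K$ for $z<0$ are immediate from the sign of the singular term; together with the fact that a solution cannot cross $\{z=0\}$, they show that on any bounded $\varphi$-range $z$ keeps a constant sign and remains in a fixed compact interval, so blow-up to $\pm\infty$ is excluded.

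For part (1) I note that in the forward direction the singular term is attracting: for $|z|$ small, $\dot z$ points toward $0$ (it is negative as $z\to0^+$ and positive as $z\to0^-$). As $z_a$ is bounded on $[a,\beta)$, the standard continuation criterion forces the orbit, when $\beta<\orho$, to approach the boundary of the domain; since $\varphi$ stays in a compact subinterval of $[0,\orho]$ this can only occur through the singular line, whence $\liminf_{\varphi\to\beta^-}|z_a(\varphi)|=0$. Because $\beta\in(0,\orho)$ (indeed $\beta>a\ge0$ and $\beta<\orho$) we have $D(\beta)g(\beta)>0$, so $Dg\ge m>0$ near $\beta$ for some $m$; then $\dot z$ strictly points toward $0$ for $|z|$ small, $z_a$ is eventually monotone, and the $\liminf$ improves to $\lim_{\varphi\to\beta^-}z_a(\varphi)=0$. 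If instead $\beta=\orho$, I use $g(\orho)=0$: either $z_a$ stays bounded away from $0$, so $\frac{Dg}{z}\to0$, $\dot z\to h(\orho)-c$ and $z_a$ has a finite limit, or $z_a\to0$; in either case $z_a(\beta)$ is a real value.

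For part (2) the same structural fact works in the opposite way. Reading the equation backward, i.e.\ in the variable $s=b-\varphi$ where $\frac{dw}{ds}=-(h-c)+\frac{Dg}{w}$ with $w(s)=z_b(b-s)$, the singular term becomes repelling: for $|z|$ small the flow pushes $z$ away from $0$. Hence $z_b$ cannot reach $0$ at any interior abscissa $\varphi^{\ast}\in(0,b)$, since there $\varphi^{\ast}\in(0,\orho)$ gives $D(\varphi^{\ast})g(\varphi^{\ast})>0$ and $\dot z$ would have to point strictly away from $0$, contradicting $z_b(\varphi)\to0$. The one-sided bounds again exclude blow-up, so $z_b$ stays of constant sign and bounded down to every $\varphi>0$; therefore the solution is defined on all of $(0,b]$.

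The genuine obstacle in both parts is exactly the local analysis along $\{z=0\}$: in part (1) passing from ``$z_a$ comes arbitrarily close to $0$'' to ``$z_a$ converges to $0$'', and in part (2) showing the solution never touches $0$ on $(0,b)$. Both rely on the quantitative bound $Dg\ge m>0$ away from the endpoints, available precisely because $D>0$ on $(0,\orho]$ and $g>0$ on $[0,\orho)$; the endpoint $\orho$, where $g$ vanishes, is the only point where this bound fails and the separate limit argument above is needed.
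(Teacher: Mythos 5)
Your argument is sound in outline and reaches the right conclusions, but it takes a genuinely different route from the paper's at the decisive technical point: the behaviour of $z$ at the endpoints of the maximal-existence interval. The paper multiplies \eqref{e:zeq} by $z$ and integrates, obtaining (for $z<0$; the case $z>0$ is reduced to this by the substitution $\eta=-z$)
\[
z(\varphi_0) = -\sqrt{z^2(\varphi)-2\int_{\varphi_0}^{\varphi}\left(h(\sigma)-c\right)z(\sigma)\,d\sigma+2\int_{\varphi_0}^{\varphi}D(\sigma)g(\sigma)\,d\sigma}\,,
\]
which shows in one stroke that $z$ admits a finite limit at \emph{both} endpoints, regardless of whether the endpoint is interior or equal to $\orho$, where $g$ vanishes and your quantitative bound $Dg\ge m>0$ fails; the continuation theorem then identifies the limit as $0$ at interior endpoints, and in part (2) the contradiction comes from the mean value theorem (a sequence with $\dot z(\varphi_n)<0$) against $\dot z(\varphi)\to+\infty$ as $\varphi\to\alpha^+$. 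Your alternative — continuation criterion plus the attracting/repelling structure of the line $\{z=0\}$ — is a legitimate dynamical substitute, and is more transparent about \emph{why} the orbit must exit through the singular line; the paper's square-identity is the shortcut that settles all endpoint limits uniformly.

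Two of your steps are compressed and need explicit (one-line) patches. First, at $\beta=\orho$ your dichotomy ``either $z_a$ stays bounded away from $0$, or $z_a\to0$'' is not exhaustive as written: continuation only yields $\liminf_{\varphi\to\orho^-}|z_a(\varphi)|=0$, and oscillation must be excluded. This follows from your own one-sided bound: for $z<0$, $\dot z\ge -K$ gives $z_a(\varphi)\ge z_a(\varphi_n)-K(\orho-\varphi_n)$ for $\varphi\in(\varphi_n,\orho)$, so a sequence with $z_a(\varphi_n)\to0$ forces $z_a(\varphi)\to0$. Second, in part (2) you contradict ``$z_b(\varphi)\to0$'', which has not been established — again only the $\liminf$ is available. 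The rigorous version of your repulsion argument: choose $\delta$ with $-K+m/\delta>0$, so that $\dot z>0$ on the strip $\{-\delta<z<0\}$ near $\alpha$; then the backward orbit, once it leaves the strip through $z=-\delta$, can never re-enter it (a re-entry crossing would require $\dot z\le 0$ there), while if it never leaves it is increasing in $\varphi$ and hence stays below each fixed $z_b(\varphi_n)<0$; either alternative contradicts $\liminf_{\varphi\to\alpha^+}|z_b(\varphi)|=0$. Finally, your stated one-sided estimates alone do not prevent backward blow-up of $|z|$: you also need that $D(\varphi)g(\varphi)/|z|$ is bounded on $\{|z|\ge\eta\}$, so that the right-hand side of \eqref{e:zeq} is bounded away from the singular line — trivial, but that is the actual reason finite-$\varphi$ blow-up is excluded. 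With these repairs your proof is complete and fully parallel in strength to the paper's.
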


\begin{proof} Denote by $f_c$ the right-hand side of equation \eqref{e:zeq}; since $f_c$ is globally continuous in its domain and locally Lipschitz-continuous in $z$, the uniqueness of the solutions of both $\eqref{e:vpt}_1$ and $\eqref{e:vpt}_2$ is guaranteed and it only remains to investigate their maximal-existence intervals.

The solutions of equation \eqref{e:zeq} never vanish in their domain because \eqref{e:zeq} is singular when $z=0$. Moreover, if $z(\varphi)$ is a positive solution in some interval $(a,b)\subseteq (0, \overline \rho)$, then the negative function $\eta(\varphi)=-z(\varphi)$ is a solution in $(a,b)$ of
\begin{equation}\label{e:eta}
\dot \eta (\varphi)= -h(\varphi)+c-\frac{D(\varphi)g(\varphi)}{\eta(\varphi)}.
\end{equation}
Since \eqref{e:eta} and \eqref{e:zeq} are completely analogous, we may restrict to the case $z_0<0$, see Figure \ref{f:ivp-fvp}. At last, assume that $z(\phi)$ is defined in some maximal-existence interval $(\alpha,\beta)$. Since $z(\varphi)<0$ in $(\alpha, \beta)$, the sign conditions in (D) and (g) imply that
\begin{equation}\label{e:bound}
\dot z(\varphi)>h(\varphi)-c,\qquad \hbox{ in $(\alpha, \beta)$}.
\end{equation}
Then, the function $z$ is bounded in $(\alpha,\beta)$. Now, we prove that both $z(\alpha)$ and $z(\beta)$ exist. Indeed, by multiplying by $z$ equation \eqref{e:zeq} we obtain that
\begin{equation*}
\frac12\frac{dz^2(\varphi)}{d\varphi} = \left(h(\varphi)-c\right)z(\varphi)-D(\varphi)g(\varphi).
\end{equation*}
By integrating in $[\varphi_0, \varphi]\subset (\alpha, \beta)$ we have
\begin{equation*}
z^2(\varphi_0)=z^2(\varphi)-2\int_{\varphi_0}^{\varphi}(h(\sigma)-c)z(\sigma)\, d\sigma +2\int_{\varphi_0}^{\varphi}D(\sigma)g(\sigma)\, d\sigma.
\end{equation*}
Since, moreover, $z(\varphi)<0$ in $(\alpha, \beta)$, we deduce
\begin{equation*}
z(\varphi_0) = -\sqrt{z^2(\varphi)-2\int_{\varphi_0}^{\varphi}(h(\sigma)-c)z(\sigma)\, d\sigma +2\int_{\varphi_0}^{\varphi}D(\sigma)g(\sigma)\, d\sigma},
\end{equation*}
which implies the existence of $z(\alpha)$. The existence of $z(\beta)$ is proved analogously.

\begin{figure}[htbp]
\begin{picture}(100,100)(80,-15)
\setlength{\unitlength}{1pt}

\put(200,60){
\put(-15,0){\vector(1,0){175}}
\put(160,8){\makebox(0,0){$\varphi$}}
\put(-10,-5){\vector(0,1){30}}
\put(-5,17){\makebox(0,0){$z$}}
\put(10,2){\line(0,-1){80}}
\put(10,8){\makebox(0,0){$a$}}
\put(150,2){\line(0,-1){80}}
\put(150,8){\makebox(0,0){$b$}}

\put(0,0){\thicklines{\qbezier(10,-70)(80,-35)(100,0)}} 
\put(60,-41){\thicklines{\vector(1,1){3}}}

\put(0,0){\thicklines{\qbezier(10,-70)(80,-45)(150,0)}} 
\put(120,-19){\thicklines{\vector(2,1){3}}}

\put(0,0){\thicklines{\qbezier(10,-70)(100,-60)(150,-35)}} 
\put(88,-57){\thicklines{\vector(3,1){3}}}

\put(0,0){\thicklines{\qbezier(10,0)(100,-60)(150,-70)}} 
\put(83,-44){\thicklines{\vector(-2,1){3}}}

\put(0,0){\thicklines{\qbezier(10,-40)(80,-70)(150,-70)}} 
\put(58,-57){\thicklines{\vector(-2,1){3}}}

\put(-2,-70){\makebox(0,0)[l]{$z_0$}}
\put(164,-70){\makebox(0,0)[r]{$z_0$}}
}
\end{picture}
\caption{\label{f:ivp-fvp}{Solutions to the initial-value problem $\eqref{e:vpt}_1$ (left-to-right arrows) and to the final-value problem $\eqref{e:vpt}_2$ (right-to-left arrows); here, $z_0<0$.}}
\end{figure}
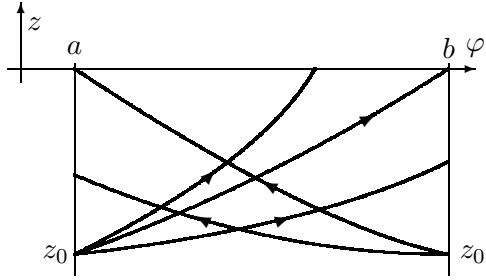

\begin{enumerate}[{\em (1)}]

\item We showed above that $z_a(\beta)$ exists in $\R$; if $\beta <\overline \rho$, the continuation theorem for solutions of an ordinary differential equation implies $z_a(\beta)=0$.

\item Let $(\alpha, b] \subseteq (0, b]$ be the left maximal-existence interval of $z_b$ and assume by contradiction that $\alpha>0$. Since $z_b(\alpha)$ is a real value, then $z_b(\alpha)=0$ and so $z_b$ is continuously extendable to $[\alpha, b]$. Consider now a sequence $\{\psi_n\}_n \subset (\alpha, b]$ that converges to $\alpha$. By the mean value Theorem we find a sequence $\{\varphi_n\}_n\subset(a,b)$ with $\phi_n\in(\alpha,\psi_n)$ such that
\begin{equation}\label{e:mean}
\frac{z(\psi_n)}{\psi_n -\alpha}= \dot z(\varphi_n)<0
\end{equation}
for all $n \in \N$. If $\alpha>0$, from \eqref{e:zeq} we obtain that
 \begin{equation*}
\lim_{\varphi \to \alpha^+}\dot z(\varphi)=\lim_{\varphi \to \alpha^+} \left(h(\varphi)-c -\frac{D(\varphi)g(\varphi)}{z(\varphi)}\right) = +\infty,
\end{equation*}
in contradiction with \eqref{e:mean}. Hence, $\alpha=0$ and $z_b$ is defined on all $(0, b]$.
\end{enumerate}
\end{proof}
According to Lemma \ref{l:iandfvp}, every solution $z$ of \eqref{e:zeq} defined in $(0, b)\subseteq (0, \overline \rho]$ has a {\em continuous} extension to $[0, b)$, still denoted by $z$.

Now, we briefly recall the definitions of upper- and lower-solution for equation \eqref{e:zeq}.

\begin{definition}\label{d:ul} Let $J\subseteq [0,\overline \rho]$ be an interval. A function $\omega\in C^1(J)$ is a \emph{lower-solution} for equation \eqref{e:zeq} if
\begin{equation}\label{e:lower}
\dot\omega(\varphi)\le h(\varphi)-c -\frac{D(\varphi)g(\varphi)}{\omega (\varphi)},\qquad  \varphi\in J.
\end{equation}
Similarly, a function $\eta\in C^1(J)$ is an \emph{upper-solution} for \eqref{e:zeq} if
\begin{equation}\label{e:upper}
\dot\eta(\varphi)\ge h(\varphi)-c-\frac{D(\varphi)g(\varphi)}{\eta (\varphi)},\qquad  \varphi\in J.
\end{equation}
The function $\omega$ (resp. $\eta$) is a {\em strict} lower-solution (resp., upper-solution) if \eqref{e:lower} (resp., \eqref{e:upper}) holds with strict inequality.
\end{definition}

Now, we focus on the case $z_0<0$ in \eqref{e:vpt} and keep in mind Lemma \ref{l:iandfvp}. The existence of a strict lower- or upper-solution for equation \eqref{e:zeq} in either $[a, \beta)\subseteq [0, \overline \rho)$ or $(0,b]\subseteq (0, \overline \rho]$ determines an invariant region for the solutions of the \emph{initial}- and \emph{final}-value problems in \eqref{e:vpt}, respectively.
\begin{lemma}\label{l:keyif} Let $I\subseteq [0,\orho]$; consider a strict lower-solution $\omega$ and a strict upper-solution $\eta$ in $I$ of \eqref{e:zeq}, with $\omega(\phi)<0$ and $\eta(\phi)<0$ in $I$. Moreover, fix $z_0<0$.
\begin{enumerate}[(1)]
\item If $I=[a,b)$ and $z$ is the solution of $\eqref{e:vpt}_1$ defined in its maximal-existence interval $[a,\beta)\subseteq[a,b)$, then:
\begin{enumerate}[({1}.i)]

\item if $\omega(a)\le z_0$, then $\omega (\varphi)<z(\varphi)$ for all $\varphi \in (a, \beta)$;

\item if $\eta(a)\ge z_0$, then $\beta= b$ and $z(\varphi)<\eta(\varphi)$ for all $\varphi \in (a, b)$.
\end{enumerate}
\item If $I=(0,b]$ and $z$ is the solution of $\eqref{e:vpt}_2$, then:
\begin{enumerate}[({2}.i)]

\item if $\omega(b) \ge z_0$, then $\omega (\varphi)> z(\phi)$ for all $\varphi \in (0, b)$;

\item if $\eta(b)\le z_0$, then $\eta (\varphi)<z(\varphi)$ for all $\varphi \in (0, b)$.
\end{enumerate}
\end{enumerate}

\end{lemma}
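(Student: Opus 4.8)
The plan is to prove all four assertions by a single strict-comparison mechanism; the only genuinely extra ingredient is the continuation claim $\beta=b$ in (1.ii), which will come from Lemma~\ref{l:iandfvp}. Throughout, recall that solutions of \eqref{e:zeq} started at a negative value stay negative (as in the proof of Lemma~\ref{l:iandfvp}), so all quantities in the denominators remain away from $0$ and every comparison is performed among negative functions.

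The core observation I would isolate first is a one-line crossing estimate. If $z$ solves \eqref{e:zeq} and $\omega$ is a strict lower-solution, and if at some $\varphi_1$ one has $z(\varphi_1)=\omega(\varphi_1)=:\zeta<0$, then substituting the \emph{common} value $\zeta$ into the right-hand side of \eqref{e:zeq} and using \eqref{e:lower} with strict inequality gives
\[
\dot\omega(\varphi_1) < h(\varphi_1)-c-\frac{D(\varphi_1)g(\varphi_1)}{\zeta} = \dot z(\varphi_1),
\]
so $z-\omega$ strictly increases at each of its zeros; the mirror computation with a strict upper-solution $\eta$ and \eqref{e:upper} yields $\dot\eta(\varphi_1)>\dot z(\varphi_1)$, so $z-\eta$ strictly decreases at each of its zeros. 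I would stress that, thanks to strictness, this needs neither monotonicity of the right-hand side in $z$ nor any Lipschitz bound: the nonlinearity is simply evaluated at the single common value $\zeta$.

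Given this, each assertion reduces to a first/last-crossing argument. For the initial-value problem in part (1) I would read forward from $a$. In (1.i) set $v=z-\omega$, so $v(a)=z_0-\omega(a)\ge0$; if $v(a)>0$, the smallest $\varphi_1>a$ with $v(\varphi_1)=0$ has $v>0$ on $(a,\varphi_1)$, forcing $\dot v(\varphi_1)\le0$ and contradicting the strict increase above, while the degenerate case $v(a)=0$ is absorbed because then $\dot v(a)>0$ pushes $v$ strictly positive immediately to the right. Assertion (1.ii) is identical with $v=\eta-z$, which is $\ge0$ at $a$ and crosses zero only upward. For the final-value problem in part (2) I would read backward from $b$: in (2.i) the defect $\omega-z$ is $\ge0$ at $b$ and crosses zero only downward (in the direction of increasing $\varphi$), so a rightmost interior zero would need $\dot{}(\omega-z)\ge0$, a contradiction; (2.ii) is the same with $z-\eta$. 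The endpoint-equality cases ($z_0=\omega(b)$ or $z_0=\eta(b)$) are again handled by the strict derivative at $b$. Since $z$ exists on all of $(0,b]$ by Lemma~\ref{l:iandfvp}(2), no continuation issue arises in part (2).

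The one remaining step is the continuation claim $\beta=b$ in (1.ii). Once $z<\eta<0$ is established on $(a,\beta)$, I would argue by contradiction: if $\beta<b\le\overline\rho$, then Lemma~\ref{l:iandfvp}(1) forces $z(\beta)=0$; but passing to the limit in $z<\eta$ and using $\eta\in C^1([a,b))$ gives $z(\beta^-)\le\eta(\beta)<0$, a contradiction. Hence the upper barrier keeps $z$ away from $0$ up to $b$, and $\beta=b$. I expect the only delicate points to be the bookkeeping of orientation for the final-value problem, where both the inequality and the direction of the crossing reverse, and the clean treatment of the endpoint-equality cases; both are dispatched by the strict-derivative observation, so I anticipate no serious obstacle.
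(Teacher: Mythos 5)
Your proposal is correct and follows essentially the same route as the paper: the single mechanism is the strict-inequality estimate at any contact point ($\dot\omega(\varphi_1)<\dot z(\varphi_1)$ whenever $\omega(\varphi_1)=z(\varphi_1)<0$, with the mirror estimate for $\eta$), combined with a first/last-crossing argument and the strict-derivative treatment of the endpoint-equality cases, exactly as in the paper's proof. Your explicit derivation of $\beta=b$ in (1.ii) via Lemma~\ref{l:iandfvp}(1) (the barrier $z<\eta<0$ excluding $z(\beta)=0$ when $\beta<b\le\orho$) correctly supplies the continuation step that the paper leaves implicit under ``case \emph{(ii)} is similar''.
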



\begin{figure}[htbp]
\begin{picture}(100,80)(80,-10)
\setlength{\unitlength}{1pt}

\put(160,0){
\put(-50,60){
\put(-15,0){\vector(1,0){165}}
\put(150,8){\makebox(0,0){$\varphi$}}
\put(-10,-5){\vector(0,1){20}}
\put(-5,17){\makebox(0,0){$z$}}
\put(10,2){\line(0,-1){80}}
\put(10,8){\makebox(0,0){$a$}}
\put(140,2){\line(0,-1){80}}
\put(140,8){\makebox(0,0){$b$}}
\put(0,0){\thicklines{\qbezier(10,-10)(50,-25)(140,-30)}} 
\put(70,-24){\thicklines{\vector(4,-1){3}}}
\put(0,-5){\thicklines{\qbezier(10,-20)(50,-40)(140,-45)}} 
\put(70,-43){\thicklines{\vector(4,-1){3}}}
\put(0,-10){\thicklines{\qbezier(10,-30)(50,-50)(140,-60)}} 
\put(70,-60){\thicklines{\vector(4,-1){3}}}
\put(-2,-10){\makebox(0,0){$\eta(a)$}}
\put(0,-25){\makebox(0,0){$z_0$}}
\put(-2,-42){\makebox(0,0){$\omega(a)$}}
\put(105,-22){\makebox(0,0){$\eta$}}
\put(105,-41){\makebox(0,0){$z$}}
\put(105,-59){\makebox(0,0){$\omega$}}
}

\put(170,60){
\put(-15,0){\vector(1,0){165}}
\put(150,8){\makebox(0,0){$\varphi$}}
\put(-10,-5){\vector(0,1){20}}
\put(-5,17){\makebox(0,0){$z$}}
\put(10,2){\line(0,-1){80}}
\put(10,8){\makebox(0,0){$a$}}
\put(140,2){\line(0,-1){80}}
\put(140,8){\makebox(0,0){$b$}}
\put(0,0){\thicklines{\qbezier(10,-10)(50,-25)(140,-30)}} 
\put(70,-24){\thicklines{\vector(-4,1){3}}}
\put(0,-5){\thicklines{\qbezier(10,-20)(50,-40)(140,-45)}} 
\put(70,-43){\thicklines{\vector(-4,1){3}}}
\put(0,-10){\thicklines{\qbezier(10,-30)(50,-50)(140,-60)}} 
\put(70,-59){\thicklines{\vector(-4,1){3}}}
\put(143,-30){\makebox(0,0)[l]{$\omega(b)$}}
\put(143,-50){\makebox(0,0)[l]{$z_0$}}
\put(143,-70){\makebox(0,0)[l]{$\eta(b)$}}
\put(105,-22){\makebox(0,0){$\omega$}}
\put(105,-41){\makebox(0,0){$z$}}
\put(105,-59){\makebox(0,0){$\eta$}}
}
}
\end{picture}
\caption{\label{f:lower-upper}{Lower- and upper-solutions of $\eqref{e:vpt}_1$ (left) and $\eqref{e:vpt}_2$ (right).}}
\end{figure}
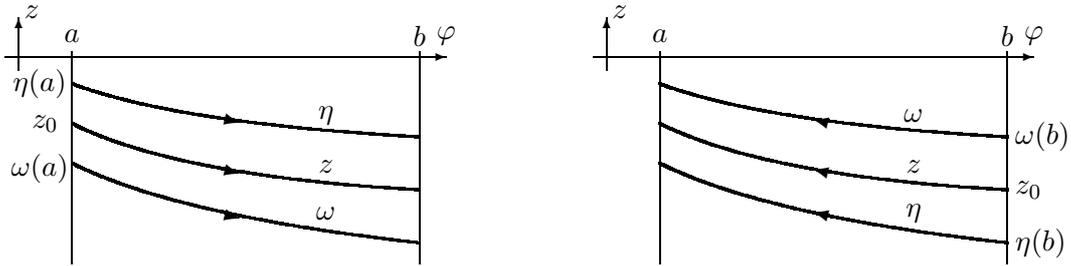

\begin{proof} For both problems in $\eqref{e:vpt}$ we only prove case {\em (i)} since {\em (ii)} is similar; see Figure \ref{f:lower-upper}.

First, we deal with $\eqref{e:vpt}_2$. We claim that for some $\varepsilon >0$ we have $z(\varphi)<\omega(\varphi)$ for $\varphi\in (b-\varepsilon, b)$. Indeed, this follows by a continuity argument if $\omega(b)>z_0$; if $\omega(b)=z_0$, then $\dot \omega (b)<\dot z(b)$, because $\omega(\varphi)$ is a strict lower-solution. This proves the claim.

Now, assume that there exists $\varphi_0 \in (0, b)$ such that $z(\varphi_0)=\omega(\varphi_0)$; without loss of generality we can assume
\begin{equation}\label{e:te}
    z(\varphi)<\omega(\varphi), \qquad \varphi \in (\varphi_0, b).
     \end{equation}
As above, we obtain again that $\dot \omega(\varphi_0)<\dot z(\varphi_0)$ and then $z(\varphi) > \omega(\varphi)$ in a right neighborhood of $\varphi_0$,  in contradiction with \eqref{e:te}.

Now, we deal with $\eqref{e:vpt}_1$. If $\omega(0)<z_0$, then $\omega(\varphi)<z(\varphi)$ in a right neighborhood of $0$ by continuity. We reach the same conclusion if $\omega(0)=z_0$; indeed, $\omega$ is a strict lower-solution and then $\dot\omega(\phi)<\dot z(\phi)$ in a right neighborhood of $0$. Assume that there exists $\varphi_0  \in (0,b)$ in the domain of $z$ such that $\omega(\varphi_0) = z(\varphi_0)$; then, we easily get a contradiction as above.
\end{proof}

\section{The first-order problem}\label{s:first solution}
\setcounter{equation}{0}

In this section we first prove Theorem \ref{t:fosemi}. Then, we show some properties of the solutions of problem \eqref{e:fo}.

\smallskip

\begin{proofof}{Theorem \ref{t:fosemi}} We first deal with cases \D{1} and \D{2}, leaving \D{0} for the end of the proof. The existence of $c^*$ and the case $z(0)=0$ were considered in \cite[Theorem 2.2]{MMconv} under the further assumption $g(0)=0$. Indeed, the same result straightforwardly extends to cases \D{1} and \D{2} because $D(0)=0$. This proves the second part of the statement of the theorem. So, as far as existence is concerned, it remains to consider the case
\begin{equation}\label{e:cc*}
c<c^*
\end{equation}
and then $z(0)<0$. The proof splits into three parts, the last one dealing with uniqueness for $c\in\R$.

\medskip
\noindent  \emph{(a) Non-existence for large negative $z(0)$.} This first part does not assume \eqref{e:cc*}. We prove that if $z$ is a solution to \eqref{e:fo}, then necessarily $z(0)$ must satisfy the lower bound
\begin{equation}\label{e:lowerbound}
z(0) \ge -1-\overline \rho(H+M),
\end{equation}
for
\begin{equation}\label{e:HM}
H:=\max_{\varphi \in [0, \overline \rho]}h(\varphi)-c, \qquad M:=\max_{\varphi \in [0, \overline \rho]}D(\varphi)g(\varphi).
\end{equation}
Indeed, fix $z_0$ such that
\begin{equation}\label{e:z0}
z_0 < -1-\overline \rho(H+M)
\end{equation}
and consider the function 
\begin{equation}\label{e:etaline}
\eta(\varphi)=-\frac{1+z_0}{\overline \rho}\varphi+z_0,
\end{equation} 
i.e. the line connecting $(0, z_0)$ to $(\overline \rho, -1)$; see Figure \ref{f:manyzeta}. We claim that $\eta(\phi)$ is a strict upper-solution for \eqref{e:zeq} on all $[0, \overline \rho]$. Indeed, since $\eta(\varphi)\le -1$ for $\varphi \in [0, \overline \rho]$, we have that
$$
\frac{D(\varphi)g(\varphi)}{-\eta(\varphi)}=\frac{D(\varphi)g(\varphi)}{\frac{1+z_0}{\overline \rho}\varphi-z_0}\le D(\varphi)g(\varphi)\le M, \quad \varphi \in [0, \overline \rho].
$$
Consequently, by \eqref{e:z0} we have
$$
\dot \eta(\varphi)=-\frac{1+z_0}{\overline \rho}>H+M\ge h(\varphi)-c-\frac{D(\varphi)g(\varphi)}{\eta(\varphi)}, \quad \varphi \in [0, \overline \rho],
$$
which proves the claim.

Denote by $\hat z_c$ the solution of the equation in \eqref{e:fo} satisfying $\hat{z}_c(0) = z_0$, where $z_0$ satisfies \eqref{e:z0}. By Lemma \ref{l:iandfvp}{\em (1)} we have that $\hat z_c$ is unique; by Lemma \ref{l:keyif}{\em (1.ii)} that $\hat z_c$ is defined in $[0,\overline{\rho}]$ and $\hat z_c(\varphi)< \eta (\varphi)$ for all $\varphi \in (0, \overline \rho)$. Then, $\hat z_c(\overline \rho)\le \eta(\overline \rho)=-1$ and, hence, $\hat z_c$ is not a solution of \eqref{e:fo}.

\medskip

\noindent \emph{(b) Existence in cases \D{1} and \D{2}.} \ We denote by $z_{c^*}(\varphi)$ the solution of \eqref{e:fo} corresponding to $c^*$; the existence of $z_{c^*}(\phi)$ is guaranteed by the second part of the statement of the theorem and in particular $z_{c^*}(\phi)<0$ if $\phi\in(0,\orho)$. We also denote with $z_n(\varphi)$ the solution of the problem
\begin{equation}\label{e:zn}
\left\{
\begin{array}{l}
\dot z (\varphi)= h(\varphi)-c-\frac{D(\varphi)g(\varphi)}{z(\varphi)}, \quad \varphi \in (0, \overline \rho],\\
z(\overline \rho)=-\frac 1n,
\end{array}
\right.
\end{equation}
for $n\in \mathbb{N}$, which exists by Lemma \ref{l:iandfvp}{\em (2)}. By Lemma \ref{l:keyif}{\em (2.i)} we have
\begin{equation}\label{e:znz*}
z_n(\varphi)<z_{c^*}(\varphi), \quad \phi\in(0, \overline \rho]
\end{equation}
and then
\begin{equation}\label{e:zn<0}
z_n(\phi)<0, \quad \phi\in(0,\orho].
\end{equation}

\begin{figure}[htbp]
\begin{picture}(100,120)(80,-30)
\setlength{\unitlength}{1pt}

\put(200,60){
\put(0,0){\vector(1,0){160}}
\put(160,8){\makebox(0,0){$\varphi$}}
\put(10,-80){\vector(0,1){100}}
\put(5,17){\makebox(0,0){$z$}}
\put(150,2){\line(0,-1){90}}
\put(150,8){\makebox(0,0){$\overline{\rho}$}}
\put(-5,-42){\makebox(0,0)[l]{$-1$}}
\put(8,-42){\line(1,0){4}}
\put(152,-42){\makebox(0,0)[l]{$-1$}}
\put(148,-42){\line(1,0){4}}
\put(0,-50){\thicklines{\qbezier(10,-20)(50,-35)(150,-35)}} 
\put(105,-75){\makebox(0,0){$\hat{z}_c$}}
\put(-2,-70){\makebox(0,0)[l]{$z_0$}}
\put(0,0){\thicklines{\qbezier(10,-38)(50,-35)(150,-10)}} 
\put(105,-30){\makebox(0,0){$z_n$}}
\put(0,0){\thicklines{\qbezier(10,-28)(80,-25)(150,0)}} 
\put(25,-21){\makebox(0,0){$\overline{z}$}}
\put(0,-27){\makebox(0,0)[l]{$\ell$}}
\put(8,-28){\line(1,0){4}}
\put(154,-10){\makebox(0,0)[l]{$-\frac1n$}}
\put(148,-10){\line(1,0){4}}
\put(0,0){\thicklines{\qbezier(10,0)(70,-20)(150,0)}} 
\put(20,-10){\makebox(0,0){$z_{c^*}$}}
\put(10,-70){\thicklines{\line(5,1){140}}}
\put(105,-46){\makebox(0,0){$\eta$}}
}
\end{picture}
\caption{\label{f:manyzeta}{The solutions $z_{c^*}$, $z_n$, $\overline{z}$, $\hat{z}_c$ and the upper-solution $\eta$; here, $z_0$ satisfies \eqref{e:z0}.}}
\end{figure}
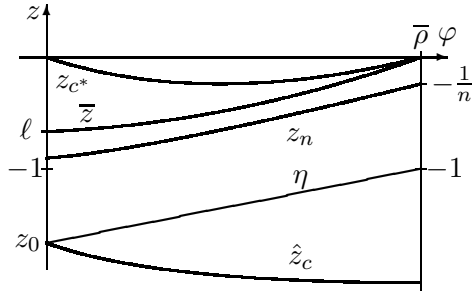

Let $\hat z_c(\varphi)$ be the solution of the equation in \eqref{e:fo} with $\hat z_c(0) <-1-\overline \rho(H+M)$ that was already introduced in item \emph{(a)}. The uniqueness of solutions stated in Lemma \ref{l:iandfvp} implies, on the one hand, that $\hat z_c(\varphi)<z_n(\varphi)$ for all $\varphi \in [0, \overline \rho]$ and $n \in \N$; on the other hand, that the sequence $\{z_n\}_n$ is  increasing on $ (0, \overline \rho]$. Define
$$
\overline z(\varphi):=\lim_{n \to\infty}z_n(\varphi), \quad \varphi \in  (0, \overline \rho].
$$
By \eqref{e:znz*} we notice that
\begin{equation}\label{e:ozeta<0}
\overline z (\orho)=0 \quad \hbox{ and }\quad \overline{z}(\phi)<0,\ \phi\in(0,\orho).
\end{equation}
We claim that $\overline{z}$ is the solution to \eqref{e:fo} we are looking for. Indeed, by integrating the equation in \eqref{e:fo} in $[\varphi, \varphi_1]\subset (0, \overline \rho)$, we obtain that
\begin{equation}\label{e:Izn}
 z_n(\varphi_1)- z_n(\varphi)=\int_{\varphi}^{\varphi_1}\left (h(\sigma)-c\right) \, d\sigma +\int_{\varphi}^{\varphi_1}\frac{D(\sigma)g(\sigma)}{-z_n(\sigma)}\, d\sigma.
\end{equation}
Since the sequence
$$
\left\{\frac{D(\varphi)g(\varphi)}{-z_n(\varphi)} \right\}_n
$$
is positive by \eqref{e:zn<0} and increasing in $(0, \overline \rho)$, we can pass to the limit in \eqref{e:Izn} by the monotone convergence Theorem and obtain
\begin{equation}\label{e:Iz}
\overline z(\varphi_1)-\overline z(\varphi)=\int_{\varphi}^{\varphi_1}\left (h(\sigma)-c\right)\, d\sigma-\int_{\varphi}^{\varphi_1}\frac{D(\sigma)g(\sigma)}{\overline z(\sigma)}\, d\sigma.
\end{equation}
This implies that $\overline z(\varphi)$ is a solution of the equation in \eqref{e:fo} on all $(0, \overline \rho)$; it also satisfies $\overline z(\overline \rho)=0$ and $\overline z(\varphi)<0$ on $(0, \overline \rho)$. By \eqref{e:ozeta<0}, the function
$$
\varphi \longmapsto \int_{\varphi}^{\phi_1}\frac{D(\sigma)g(\sigma)}{\overline z(\sigma)}\, d\sigma, \quad \varphi \in (0, \overline \rho),
$$
is increasing. Then, identity \eqref{e:Iz} implies the existence of ${\lim_{\varphi \to 0^+}}\overline z(\varphi)=:\ell$; since $\overline{z}(\phi)<0$ if $\phi\in(0,\overline{\rho})$, we deduce that $\ell \in \{-\infty\}\cup (-\infty, 0]$. The case $\ell =0$ is excluded by the second part of the statement of the theorem because of \eqref{e:cc*}; moreover, we have $\hat z_c(\varphi)<z_1(\varphi)\le \overline z(\varphi)$ for all $\varphi \in (0, \overline \rho)$ and then $\ell$ is finite. In conclusion, we have $\ell \in (-\infty, 0)$.

\medskip
\noindent \emph{(c) Uniqueness in cases \D{1} and \D{2}.} Let $c\in \R$ and assume, by contradiction, that problem \eqref{e:fo} has two distinct solutions $z_1$ and $z_2$.

If \eqref{e:cc*} holds, we have $z_i(0)<0$, $i=1,2$, by the second part of the statement of the theorem, and $z_1(0) \ne z_2(0)$ by the unique solvability of $\eqref{e:vpt}_1$. We may assume that $z_1(0)<z_2(0)$, which yields $z_1(\varphi)<z_2(\varphi)<0$ for all $\varphi \in [0, \overline \rho)$.  Therefore
$$
\dot z_2(\varphi)-\dot z_1(\varphi) =\frac{D(\varphi)g(\varphi)}{-z_2(\varphi)}-\frac{D(\varphi)g(\varphi)}{- z_1(\varphi)}>0, \quad \mbox{for all }\varphi \in [0, \overline \rho),
$$
and then the function $z_2-z_1$ is increasing in $[0,\overline{\rho})$. As a consequence,
$$
\lim_{\varphi \to \overline \rho^{\, -}} \left( z_2(\varphi)- z_1(\varphi) \right) \ge z_2(0)-z_1(0)>0,
$$
in contradiction with $z_1(\overline \rho^{\, -})=z_2(\overline \rho^{\, -})=0$.  Hence, the uniqueness if proved if $c < c^*$.

If $c\ge c^*$, let $z_2$ be the solution satisfying $z_2(0^+)=0$ and $z_1$ another solution. By the uniqueness contained in the second part of the statement of the theorem, we have $z_1(0)<0$. Then, the arguments of the previous case apply and uniqueness is proved also in this case.

\medskip
\noindent \emph{(d) Existence and uniqueness in case \D{0}.} Now, we are left with case \D{0}. Let $\hat h$ be the even extension of $h$ to $[-\orho,0)$ and extend $g$ to the same interval with a continuous function $\hat g$ satisfying $\hat g(\rho)>0$ if $\rho\in[-\orho,0)$. We extend $D$ to $[-\orho,\orho]$ by a function $\hat D \in C^1[-\overline \rho , \overline \rho ]$ such that
\[
\hat D(-\overline \rho)=\dot {\hat D}(-\overline \rho)=0,\quad \hat D(\rho)>0,\quad \rho \in (-\overline \rho, 0).
\]
Then, instead of \eqref{e:fo} we consider the auxiliary  problem
\begin{equation}\label{e:fo 1}
\left\{
\begin{array}{l}
\dot z = \hat h(\varphi)-c-\frac{\hat D(\varphi)\hat g(\varphi)}{z}, \\
z(\varphi)<0,  \quad \varphi \in (-\overline \rho,\overline \rho),\\
z(-\overline \rho\,^+)=:z_0 \le 0, \quad z(\overline{\rho})=0.
\end{array}
\right.
\end{equation}
Problem \eqref{e:fo 1} has a unique solution $\hat z$ for all $c\in\R$; this follows by applying items {\em (b)} and {\em (c)} in the interval $[-\overline \rho,\overline \rho]$. It is easy to show that the restriction $z$ of $\hat z$ to $[0, \overline \rho]$ is a solution of problem \eqref{e:fo} with $z(0^+)<0$. This shows that also problem \eqref{e:fo} is uniquely solvable for all $c \in \R$. In conclusion, problem \eqref{e:fo} is uniquely solvable for all $c \in \R$ also under condition \D{0}.
\end{proofof}

Now, we prove the monotonicity with respect to $c$ of solutions to problem \eqref{e:fo}.

\begin{lemma}\label{l:slope z} \ Let $z_1$ and $z_2$ be solutions of problem \eqref{e:fo} corresponding to $c_1$ and $c_2$, respectively. If $c_1<c_2$, then we have that
\begin{equation}\label{e:ger}
z_1(\varphi)<z_2(\varphi), \qquad \varphi \in (0, \overline \rho).
\end{equation}
\end{lemma}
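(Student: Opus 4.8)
The plan is to compare the two solutions $z_1$ and $z_2$ corresponding to speeds $c_1 < c_2$ by exploiting the structure of the equation in \eqref{e:fo} together with the final boundary condition $z_1(\orho^-) = z_2(\orho^-) = 0$. The natural strategy is to show that $z_2 - z_1$ cannot change sign in the wrong way: I will work backwards from $\orho$ and prove that $z_1(\varphi) < z_2(\varphi)$ persists as $\varphi$ decreases from $\orho$ toward $0$. This is a final-value comparison, so I expect to use Lemma \ref{l:keyif}{\em (2)} (or its underlying argument) in the version adapted to $\eqref{e:vpt}_2$, since both $z_1$ and $z_2$ are solutions defined on all of $(0,\orho]$ and negative there.

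First I would examine the behavior near $\orho$. Writing $w := z_2 - z_1$, a direct subtraction in the equation gives
\[
\dot w(\varphi) = \bigl(c_1 - c_2\bigr) + D(\varphi)g(\varphi)\left(\frac{1}{z_2(\varphi)} - \frac{1}{z_1(\varphi)}\right).
\]
Since $c_1 < c_2$, the first term $c_1 - c_2$ is strictly negative. The idea is that this term acts as a strict driving force: wherever $w(\varphi) = 0$ (i.e. $z_1 = z_2 =: z < 0$), the second term vanishes and we get $\dot w(\varphi) = c_1 - c_2 < 0$. Thus at any crossing point the difference $w$ is strictly decreasing in $\varphi$, which means that as $\varphi$ increases through a crossing, $w$ passes from positive to negative. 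Reading this backwards (decreasing $\varphi$), once $w$ is negative to the right of a point it stays negative to the left; this is exactly the mechanism that rules out $z_1 \ge z_2$ anywhere on $(0,\orho)$.

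Concretely, I would argue as follows. Near $\orho^-$, since both solutions tend to $0$ and $c_1 < c_2$, I first establish $z_1(\varphi) < z_2(\varphi)$ on some left neighborhood $(\orho - \varepsilon, \orho)$; this follows because $\dot w(\orho^-) = c_1 - c_2 < 0$ by the vanishing of $D(\orho)g(\orho)$-type contributions at the common zero, so $w > 0$ just to the left of $\orho$. Then suppose, for contradiction, that there is a point $\varphi_0 \in (0,\orho)$ with $z_1(\varphi_0) = z_2(\varphi_0)$; take the largest such $\varphi_0$, so that $w(\varphi) > 0$ for $\varphi \in (\varphi_0, \orho)$. At $\varphi_0$ the second term vanishes, giving $\dot w(\varphi_0) = c_1 - c_2 < 0$, hence $w(\varphi) > 0$ for $\varphi$ slightly larger than $\varphi_0$ (consistent) but $w(\varphi) < 0$ for $\varphi$ slightly less than $\varphi_0$ — which is fine by itself, so the real contradiction must come from the right: $\dot w(\varphi_0) < 0$ forces $w$ to be \emph{negative} immediately to the right of $\varphi_0$, contradicting $w > 0$ on $(\varphi_0, \orho)$. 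This is the crux, and it yields \eqref{e:ger}.

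The main obstacle I anticipate is the boundary behavior at $\orho$: establishing the initial strict inequality $z_1 < z_2$ on a left neighborhood of $\orho$ rigorously, given that the right-hand side of \eqref{e:fo} is singular as $z \to 0^-$ (which happens precisely at $\orho$). The term $D(\varphi)g(\varphi)/z$ blows up there, so I cannot naively evaluate $\dot w$ at $\orho$; instead I would need to control the relative rates at which $z_1$ and $z_2$ approach $0$, perhaps by using the known asymptotics near $\orho$ (analogous to the slope computations in Theorem \ref{t:semi}) or by an integral comparison using the identity $\tfrac12 (z^2)' = (h-c)z - Dg$ that already appears in the proof of Lemma \ref{l:iandfvp}. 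A cleaner route may be to invoke Lemma \ref{l:keyif} directly, treating $z_2$ as a strict upper-solution for the equation with speed $c_1$ (since increasing the speed strictly increases the right-hand side when $z < 0$), thereby deducing $z_1 < z_2$ from the final-value comparison without ever touching the singularity head-on; I expect this upper-solution reformulation to be the most economical way to close the argument.
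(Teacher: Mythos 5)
Your crossing mechanism is sound as far as it goes, but the proof has a genuine gap exactly where you flagged it: the base step $z_1<z_2$ on a left neighborhood of $\orho$ is never established, and it cannot be obtained the way you sketch. At $\orho$ both $g$ and $z_i$ vanish, so $D(\varphi)g(\varphi)/z_i(\varphi)$ is an indeterminate form $0/0$ there; in general this quotient tends to a \emph{nonzero} limit (for the equation to balance, $Dg/z_i\to h(\orho)-c_i-\dot z_i(\orho^-)$ when the latter exists), so the claim $\dot w(\orho^-)=c_1-c_2$ is unjustified. Your proposed repair via Lemma \ref{l:keyif} then contains a sign reversal: since the right-hand side $h(\varphi)-c-D(\varphi)g(\varphi)/z$ is strictly \emph{decreasing} in $c$, the solution $z_2$ for the larger speed is a strict \emph{lower}-solution of the $c_1$-equation (equivalently, $z_1$ is a strict upper-solution of the $c_2$-equation), not an upper-solution as you assert. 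Even with the roles corrected, the final-value comparison Lemma \ref{l:keyif}{\em (2)} is stated and proved only for final data $z_0<0$, whereas at $b=\orho$ the common datum is $0$, precisely the singular value; and invoking it at an interior $b$ presupposes the ordering $z_2(b)\ge z_1(b)$, which is circular. (There is also a minor sign slip in your displayed formula: $\dot w=(c_1-c_2)+D g\left(1/z_1-1/z_2\right)$, harmless at a crossing point but not elsewhere.)

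The paper closes the argument by running the comparison \emph{forward} and placing the contradiction at the endpoint, rather than extracting an ordering there: if $z_2(\varphi_0)\le z_1(\varphi_0)$ at some $\varphi_0\in(0,\orho)$, then, $z_1$ being a strict upper-solution of \eqref{e:zeq} with $c=c_2$, Lemma \ref{l:keyif}{\em (1.ii)} gives $z_2(\varphi)<z_1(\varphi)$ on $(\varphi_0,\orho)$; there $\dot z_2 = h-c_2+Dg/(-z_2) < h-c_1+Dg/(-z_1) = \dot z_1$ (strictly, since $c_1<c_2$ and $-z_2>-z_1>0$), so $z_1-z_2$ is strictly increasing, hence bounded below by a positive constant near $\orho$, contradicting $z_1(\orho^-)=z_2(\orho^-)=0$. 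Note this uses the boundary condition as the \emph{source} of the contradiction and never evaluates anything at the singular point. Your own local computation can be reorganized in the same spirit without the missing base step: at any zero of $w=z_2-z_1$ one has $\dot w=c_1-c_2<0$, so $w$ crosses strictly from positive to negative as $\varphi$ increases; hence once $w\le 0$ somewhere it stays negative (a return zero $\varphi_1$ would force $\dot w(\varphi_1)\ge 0$ from the left, impossible), and on that interval $\dot w<0$ as above, so $w$ cannot climb back to its required limit $0$ at $\orho^-$. Deriving the contradiction at $\orho$ via the boundary condition, instead of at an interior crossing below a region where $w>0$, is what makes the singular endpoint harmless.
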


\begin{proof}
Since $c_1<c_2$, then $z_1$ is a strict upper-solution on $(0, \overline \rho)$ of equation \eqref{e:zeq} with $c=c_2$.
If there exists $\varphi_0 \in (0, \overline \rho)$ such that $z_2(\varphi_0) \le z_1(\varphi_0)$, then by Lemma \ref{l:keyif}{\em (1.ii)} we deduce that $z_2(\varphi)<z_1(\varphi)$ for $\phi\in(\varphi_0, \overline \rho)$. Hence,
\begin{equation*}
\dot z_2(\varphi)=h(\varphi)-c_2+\frac{D(\varphi)g(\varphi)}{-z_2(\varphi)}<
h(\varphi)-c_1+\frac{D(\varphi)g(\varphi)}{-z_1(\varphi)}=\dot z_1(\varphi), \quad \varphi \in (\varphi_0, \overline \rho),
\end{equation*}
which contradicts $z_2(\orho\,{}^-)=0$.
\end{proof}

We conclude this section with a result about the derivative $\dot z_c(0)$ of the solutions $z_c$ to \eqref{e:fo}, under conditions \D{1} or \D{2} and in the case $c\ge c^*$. Indeed, in the case $c < c^*$ or when \D{0} holds, we have $z(0)<0$ by Theorem \ref{t:fosemi}; then $z\in C^1[0,1)$ and $\dot{z}(0) = h(0)-c$ by \eqref{e:zeq}.

The existence of the slope $\dot z_c(0)$ was first proved in  \cite[Lemma 2.1]{MMconv} and the values of $\dot z_c(0)$ were obtained in \cite[Theorem 1.1]{MMconv}. However, since in \cite{MMconv} the assumption $g(0)=0$ holds, those computations can cover only our case \D{2}. To the best of our knowledge, the result of the following proposition in case \D{1} is new. Moreover, the proof of Proposition \ref{p:slopePD} unifies both cases \D{1} and \D{2}; we emphasize that it is completely different and simpler than that in \cite{MMconv} for the latter case.

\begin{proposition}\label{p:slopePD} Assume either {\rm \D{1}} or {\rm \D{2}} and let $z_c$ be the solution to problem \eqref{e:fo} for $c \ge c^*$. Then, $\dot z_c(0^+)$ exists and
\begin{equation}\label{e:cPDrr}
\dot z_c(0^+)  =
\left\{
\begin{array}{ll}
r_+(c) & \hbox{ if } c>c^*,
\\
r_-(c) & \hbox{ if } c=c^*.
\end{array}
\right.
\end{equation}
In particular, under assumption {\rm \D{2}} we have
\[
\dot z_c(0^+) = \left\{
\begin{array}{ll}
0 & \hbox{ if } c>c^*,
\\
h(0)-c^* & \hbox{ if } c=c^*.
\end{array}
\right.
\]
 \end{proposition}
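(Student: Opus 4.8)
The plan is to reduce the statement to computing the one-sided limit $\lim_{\varphi\to0^+}z_c(\varphi)/\varphi$. Since $c\ge c^*$ forces $z_c(0^+)=0$ by Theorem \ref{t:fosemi}, the right derivative $\dot z_c(0^+)$ coincides with this limit, so it suffices to control the quotient $p(\varphi):=z_c(\varphi)/\varphi$. Writing $z_c=p\,\varphi$ in \eqref{e:zeq} and differentiating, one finds that $p$ solves
\[
\dot p(\varphi)=-\frac{1}{\varphi\,p(\varphi)}\,Q_\varphi\bigl(p(\varphi)\bigr),\qquad Q_\varphi(s):=s^2-\bigl(h(\varphi)-c\bigr)s+\frac{D(\varphi)g(\varphi)}{\varphi},
\]
on $(0,\overline\rho)$, where $Q_\varphi(s)\to Q_0(s):=s^2-(h(0)-c)s+\dot D(0)g(0)$ as $\varphi\to0^+$; by \eqref{e:cs} the polynomial $Q_0$ has the two real negative roots $r_\pm(c)$ of \eqref{e:r_pm}. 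First I would record two elementary facts: since $p<0$ and $\varphi>0$, one has $\mathrm{sign}\,\dot p(\varphi)=\mathrm{sign}\,Q_\varphi\bigl(p(\varphi)\bigr)$; and if $p$ admits a finite limit $s_0$ as $\varphi\to0^+$, then passing to the limit in \eqref{e:zeq} (equivalently, a Cesàro argument applied to $z_c(\varphi)=\int_0^\varphi\dot z_c$) shows that $s_0=\lim\dot z_c$ and that $Q_0(s_0)=0$, i.e.\ $s_0\in\{r_-(c),r_+(c)\}$.

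The sign rule turns the phase line of $p$ into an invariant-region argument. Fix numbers $r_-(c)<m_1<r_+(c)<m_2<0$ (under \D{2}, where $r_+(c)=0$, only $m_1$ and the barrier $s=0$ are used). For $\delta>0$ small, continuity of the coefficients of $Q_\varphi$ gives $Q_\varphi(m_1)<0$ and $Q_\varphi(m_2)>0$ on $(0,\delta]$; equivalently, the lines $s=m_1\varphi$ and $s=m_2\varphi$ are, respectively, a strict upper- and a strict lower-solution of \eqref{e:zeq} there, so Lemma \ref{l:keyif} applies. At the upper edge $p=m_2$ one has $\dot p>0$ and at the lower edge $p=m_1$ one has $\dot p<0$, so the band $m_1\le p\le m_2$ cannot be exited as $\varphi$ decreases, and inside it $p$ is eventually monotone (increasing toward $r_+(c)$ from below, decreasing toward it from above). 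Hence, once $p(\varphi_0)\in(r_-(c),0)$ for some small $\varphi_0$, the quotient $p$ converges monotonically, and by the root identification its limit is $r_+(c)$.

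It remains to decide whether $p$ enters the basin $(r_-(c),0)$ of $r_+(c)$ or approaches $r_-(c)$, and this is where $c^*$ enters. I would first treat $c=c^*$: the solution $z_{c^*}$ is the separatrix landing on the lower root. That $p_{c^*}\not\to r_+(c^*)$ follows by contradiction from the characterization in Theorem \ref{t:fosemi}, namely $z_c(0^+)<0$ for every $c<c^*$; were $z_{c^*}$ in the open basin of $r_+$, that property would persist under a small decrease of $c$ and force $z_c(0^+)=0$ for some $c<c^*$, a contradiction. Ruling out $p_{c^*}\to-\infty$ by a linear lower barrier (an upper-solution $m\varphi$, $m\in(r_-(c^*),r_+(c^*))$, bounding $z_{c^*}$ from below via Lemma \ref{l:keyif}) then leaves $\dot z_{c^*}(0^+)=r_-(c^*)$. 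For $c>c^*$ I would bootstrap through Lemma \ref{l:slope z}: since $z_{c^*}<z_c$ on $(0,\overline\rho)$ we have $p_{c^*}<p$, and as $r_-(\cdot)$ is strictly decreasing in the speed, $r_-(c)<r_-(c^*)=\lim p_{c^*}$; hence $p$ stays above $r_-(c)$ near $0$, lies in the basin, and $\dot z_c(0^+)=r_+(c)$.

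Finally, the special form under \D{2} is immediate once the general formula is proved: there $\dot D(0)=0$, so $Q_0(s)=s\bigl(s-(h(0)-c)\bigr)$ has roots $r_+(c)=0$ and $r_-(c)=h(0)-c$ (recall $c\ge c^*\ge h(0)$ by \eqref{e:threshold}), giving $\dot z_c(0^+)=0$ for $c>c^*$ and $\dot z_{c^*}(0^+)=h(0)-c^*$. I expect the main obstacle to be the threshold selection at $c=c^*$: the singular point $\varphi=0$ makes the passage to the limit delicate (under \D{2} the upper root $r_+(\varphi)$ itself tends to $0$, so the invariant band degenerates and must be replaced by the bare bound $p<0$ together with the moving root $r_+(\varphi)$), and separating the separatrix value $r_-(c^*)$ from the generic value $r_+(c^*)$ requires the global information encoded in $c^*$ rather than a purely local barrier.
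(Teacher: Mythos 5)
Your skeleton tracks the paper's proof quite closely — the reduction to $p(\varphi)=z_c(\varphi)/\varphi$, the limiting quadratic with roots $r_\pm(c)$ from \eqref{e:r_pm}, linear comparison lines $m\varphi$ fed into Lemma \ref{l:keyif}, and the use of Lemma \ref{l:slope z} together with the family of speeds $c<c^*$ to pin down the threshold value (the paper likewise takes $\eta(\varphi)=\bigl(r_-(c^*)+\Delta\bigr)\varphi$, $\Delta\in\bigl(0,r_+(c^*)-r_-(c^*)\bigr)$, as a strict upper-solution and approximates by $c_n\uparrow c^*$). But one of your steps fails as written: you propose to rule out $p_{c^*}\to-\infty$ by \lq\lq an upper-solution $m\varphi$, $m\in(r_-(c^*),r_+(c^*))$, bounding $z_{c^*}$ from below via Lemma \ref{l:keyif}\rq\rq. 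To invoke Lemma \ref{l:keyif}\emph{(2.ii)} you must anchor at some $b$ with $mb\le z_{c^*}(b)$, i.e.\ $p_{c^*}(b)\ge m>r_-(c^*)$; but that places $p_{c^*}(b)$ in the basin $(r_-(c^*),0)$, which your immediately preceding contradiction argument has just excluded for all small $b$ — and the line is a strict upper-solution only on a small interval $(0,\delta]$, so you cannot anchor away from $0$ either. The paper's anchoring works for $c_n<c^*$ precisely because then $z_n(0)<0=\eta(0)$, an anchor at $\varphi=0$ that is unavailable at $c=c^*$. The boundedness you need is instead immediate from \eqref{e:bound}: since $z_{c^*}(0^+)=0$, integrating $\dot z>h-c$ gives $z_{c^*}(\varphi)\ge\bigl(\min_{[0,\overline\rho]}h-c\bigr)\varphi$, so $p$ is bounded below — exactly how the paper excludes $\dot z_c(0^+)=-\infty$.

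There is a second genuine gap at the threshold: the existence of $\lim_{\varphi\to0^+}p_{c^*}(\varphi)$. Your band machinery yields convergence only after $p$ enters the basin; in the complementary scenario $p_{c^*}\le r_-(c^*)$ the roots of $Q_\varphi$ merely approximate $r_\pm(c^*)$, so no monotonicity is guaranteed and $p_{c^*}$ could a priori oscillate below $r_-(c^*)$; your final sentence \lq\lq then leaves $\dot z_{c^*}(0^+)=r_-(c^*)$\rq\rq\ presupposes the limit exists, while your root-identification step only applies once it does. The paper settles this \emph{first}, for every $c\ge c^*$, by the two-sequences mean-value argument: any $\gamma\in(\liminf,\limsup)$ would satisfy $\gamma^2-(h(0)-c)\gamma+\dot D(0)g(0)=0$, impossible for an interval of values. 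Importing that step (plus \eqref{e:bound}) repairs both defects; after that, your reversed order of cases is viable — your persistence-in-$c$ argument at $c=c^*$ is a legitimate variant of the paper's monotone approximation, though it silently relies on the continuity $z_c(\varphi_0)\to z_{c^*}(\varphi_0)$ as $c\uparrow c^*$ (a monotone-limit fact the paper proves, not automatic for this singular boundary-value problem), and your $c>c^*$ step via Lemma \ref{l:slope z} and the strict monotonicity of $r_-(\cdot)$ then goes through, including the degenerate \D{2} adjustment you correctly flag.
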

\begin{proof} Let $c \ge c^*$ and assume, by contradiction, that $\dot z_c(0^+)$ does not exist. We notice that neither $\dot z_c(0^+)=+\infty$ nor $\dot z_c(0^+)=-\infty$ are possible, the latter because of \eqref{e:bound}. By Theorem \ref{t:fosemi}, we know that $z_c(0^+)=z_{c^*}(0^+)=0$; hence, there exist $-\infty\le l < L \le 0$ such that
\begin{equation}\label{e:Ll}
l=:\liminf_{\varphi \to 0^+}\frac{z_c(\varphi)}{\varphi} < \limsup_{\varphi \to 0^+}\frac{z_c(\varphi)}{\varphi}=:L\le 0.
\end{equation}
Let $\gamma \in (l, L)$ and consider a sequence $\{\sigma_n\}_n\subset (0, \overline \rho)$ such that $\sigma_n \to 0$, $\frac{z_c(\sigma_n)}{\sigma_n}=\gamma$ for every $n$ and also
\begin{equation*}
\frac{d}{d\varphi}\left( \frac{z_c(\varphi)}{\varphi}\right)_{\bigl\vert\varphi=\sigma_n}\ge 0.
\end{equation*}
Since
\begin{equation}\label{e:OO}
\frac{d}{d\varphi}\left( \frac{z_c(\varphi)}{\varphi}\right)=\frac{1}{\varphi}\left (\dot z_c(\varphi)-\frac{z_c(\varphi)}{\varphi} \right),
\end{equation}
we have
\begin{equation*}
\gamma \le \dot z_c(\sigma_n)=h(\sigma_n)-c-\frac{D(\sigma_n)g(\sigma_n)}{z_c(\sigma_n)}=
h(\sigma_n)-c-\frac{D(\sigma_n)g(\sigma_n)}{\gamma\sigma_n}, \qquad n \in \N.
\end{equation*}
When $n \to \infty$ we obtain
\begin{equation}\label{e:gamma1}
h(0)-c-\frac{\dot D(0)g(0)}{\gamma}\ge \gamma.
\end{equation}
In a similar way we can take a sequence $\{\delta_n\}_n\subset (0, \overline \rho)$ satisfying $\delta_n \to 0$, $ \frac{z_c(\delta_n)}{\delta_n}=\gamma$ for every $n$ and
\begin{equation*}
\frac{d}{d\varphi}\left( \frac{z_c(\varphi)}{\varphi}\right)_{\bigl\vert\varphi=\delta_n}\le 0.
\end{equation*}
Since  $\dot z_c(\delta_n)\le \gamma$ for all $n$ by \eqref{e:OO}, we obtain
\begin{equation*}
\gamma\ge \dot z_c(\delta_n)=h(\delta_n)-c-\frac{D(\delta_n)g(\delta_n)}{\gamma \delta_n}, \qquad n \in \N.
\end{equation*}
Then, by passing to the limit,
\begin{equation}\label{e:gamma2}
h(0)-c-\frac{\dot D(0)g(0)}{\gamma}\le \gamma.
\end{equation}
When combining \eqref{e:gamma1} and \eqref{e:gamma2} we obtain that $\gamma$ is a root of the second-order equation
$\gamma^2-(h(0)-c)\gamma+\dot D(0)g(0)=0$. This is in contradiction with \eqref{e:Ll} or because of the arbitrariness of $\gamma$; hence, $\dot z_c(0^+)$ exists for every $c\ge c^*$ and satisfies
\begin{equation*}
\dot z_c(0^+)\in \left\{ r_-(c), r_+(c)\right\}.
\end{equation*}
We remark that, according to \eqref{e:cs}, the r.h.s. in the previous formula is always defined in $\R$. Now, we notice that the function $\psi \colon [c^*, +\infty) \to \R$ defined by
\begin{equation*}
\psi(c) =\frac{h(0)-c- \sqrt{(h(0)-c)^2-4\dot D(0)g(0)}}{2}
\end{equation*}
is strictly decreasing. So, if we assume that $\dot z_c(0^+)=r_-(c)$ for some $c>c^*$, we obtain that $\dot z_{c^*}(0^+) > \dot z_c(0^+)$ both in the case $\dot z_{c^*}(0^+) = r_-(c^*)$ and $\dot z_{c^*}(0^+) = r_+(c^*)$. It implies that $z_{c^*}>z_c$ in a right neighborhood of $0$ in contradiction with Lemma \ref{l:slope z}. Formula \eqref{e:cPDrr} is then proved if $c>c^*$.

\smallskip

Now, assume $c=c^*$ and denote for short $r_-^* = r_-(c^*)$, $r_+^* = r_+(c^*)$.
If $r_-^* = r_+^*$, by \eqref{e:cs} we have  that $c^*=h(0)+2\sqrt{\dot D(0)g(0)}$ and estimate \eqref{e:cPDrr} is satisfied. It remains to consider the case $r_-^*<r_+^*$; again by \eqref{e:cs} we have
\begin{equation}\label{e:c^*greater}
c^*>h(0)+2\sqrt{\dot D(0)g(0)}.
\end{equation}
Let $\Delta>0$ be a positive value  satisfying $0<\Delta<r_+^*-r_-^*$.
We have $r_-^*\left(r_-^*+\Delta\right) > r_-^*\cdot r_+^* = \dot D(0) g(0)$
and hence
\begin{equation}\label{e:K}
-\frac{\dot D(0)g(0)}{r_-^*+\Delta}+\frac{\dot D(0)g(0)}{r_-^*}<\Delta.
\end{equation}
By \eqref{e:c^*greater} we can consider an increasing sequence $\{c_n\}_n\subset \bigl(h(0)+2\sqrt{\dot D(0)g(0)}, c^*\bigr)$ such that $c_n\to c^*$ as $n \to \infty$; let $\{z_n\}_n$ be the corresponding sequence of solutions to problem \eqref{e:fo} obtained in Theorem \ref{t:fosemi}. Notice, in particular, that $c_n<c^*$ implies
\begin{equation}\label{e:zn0}
z_n(0)<0, \qquad \text{for all }n \in\N.
\end{equation}
By Lemma \ref{l:slope z} we have that $\{z_n(\varphi)\}_n$ is an increasing sequence, for all $\varphi \in (0, \overline \rho)$, and $z_n(\varphi)<z_{c^*}(\varphi)$ in $(0, \overline \rho)$ for all $n \in \N$. As in the proof of item {\em (b)} in Theorem \ref{t:fosemi}, it is also possible to show that
\begin{equation}\label{e:lim z*}
\lim_{n \to \infty}z_n(\varphi)=z_{c^*}(\varphi), \qquad \varphi \in [0, \overline \rho].
\end{equation}
Because of \eqref{e:K}, we can introduce a positive value $\alpha$ such that
\begin{equation}\label{e:daKalpha}
-\frac{\dot D(0)g(0)}{r_-^*+\Delta}+\frac{\dot D(0)g(0)}{r_-^*}+\alpha<\Delta.
\end{equation}
By the continuity of the function $k(\varphi, c)=h(\phi) -c$ we can find $\sigma_ 1 >0$ and $\overline n \in \N $ such that
\begin{equation}\label{e:est3}
h(\varphi)-c_n<h(0)-c^*+\frac{\alpha}{2}, \qquad \text{for } \varphi \in (0, \sigma_1) \text{ and } n \ge \overline n.
\end{equation}
Moreover, conditions (g) and either \D{1} or \D{2} allow to determine a value $\sigma_2>0$ such that
\begin{equation}\label{e:est4}
-\frac{g(\varphi)}{r_-^*+\Delta} \cdot \frac{D(\varphi)}{\varphi}<-\frac{\dot D(0) g(0)}{r_-^*+\Delta}+\frac{\alpha}{2}, \qquad \varphi\in (0, \sigma_2).
\end{equation}
Denote $\sigma:=\min\{\sigma_1, \sigma_2\}$ and introduce the function $\eta \colon [0, \sigma] \to \R$ defined by  $\eta(\varphi):=(r_-^*+\Delta)\varphi$.
By \eqref{e:est3} and \eqref{e:est4} we have, for $n \ge \overline n$ and $\phi\in(0,\sigma)$,
\begin{equation*}
\begin{array}{rl}
h(\varphi)-c_n-\frac{D(\varphi)g(\varphi)}{\eta(\varphi)}=& h(\varphi)-c_n-\frac{g(\varphi)}{r_-^*+\Delta} \cdot \frac{D(\varphi)}{\varphi}\\
< &h(0)-c^*+\frac{\alpha}{2}-\frac{\dot D(0)g(0)}{r_-^*+\Delta}+\frac{\alpha}{2}\\
=&h(0)-c^*-\frac{\dot D(0)g(0)}{r_-^*}+\frac{\dot D(0)g(0)}{r_-^*}-\frac{\dot D(0)g(0)}{r_-^*+\Delta}+\alpha.
\end{array}
\end{equation*}
Notice that $h(0)-c^*-\frac{\dot D(0)g(0)}{r_-^*}=h(0)-c^*-r_+^*=r_-^*$. Hence, by \eqref{e:daKalpha} we obtain
\begin{equation*}
h(\varphi)-c_n-\frac{D(\varphi)g(\varphi)}{\eta(\varphi)} < r_-^*+\Delta=\dot \eta(\phi),
\end{equation*}
which shows that $\eta$ is a strict upper-solution of the equation in \eqref{e:fo} with $c=c_n$, $n \ge \overline n$, on all $(0, \sigma]$. Since $z_n(0)<0=\eta(0)$ by \eqref{e:zn0}, a continuity argument shows that there exists $\psi_n\in (0, \sigma)$ such that $z_n(\varphi)<\eta(\varphi)$ in $[0, \psi_n)$ for all $n \ge \overline n$. In the remaining interval $[\psi_n, \sigma]$ we can apply Lemma \ref{l:keyif}{\em (1.ii)}; in conclusion we obtain $z_n(\varphi)<\eta(\varphi)$ in $[0, \sigma)$. Then,
\begin{equation*}
\frac{z_n(\varphi)}{\varphi}<\frac{\eta(\varphi)}{\varphi}=r_-^*+\Delta, \qquad \varphi \in (0, \sigma), \, n \ge \overline n.
\end{equation*}
Finally, by estimate  \eqref{e:lim z*} we have that
\begin{equation*}
\frac{z_{c^*}(\varphi)}{\varphi} \le r_-^*+\Delta<r_+^*, \qquad \varphi \in (0, \sigma).
\end{equation*}
We deduce that $\dot z_{c^*}(0^+)<r_+^*$; then, condition \eqref{e:cPDrr} holds and the proof is complete.
\end{proof}


\section{Semi-wavefronts  via a first-order analysis}\label{s:equiv}
\setcounter{equation}{0}

In this section we first show that semi-wavefronts of equation \eqref{e:E} are strictly monotone. Then, by exploiting this result, we prove Theorem \ref{t:semiequiv}.

Here follows our first result: we recall that by Definition \ref{d:swf} a semi-wavefront is necessarily valued in $[0,\overline{\rho})$.

\begin{proposition}\label{p:monot}  \
Let $\varphi$ be a semi-wavefront of \eqref{e:E} from (to) $\overline \rho$. Then $\varphi^{\, \prime}(\xi)< 0$ ($\varphi^{\, \prime}(\xi) > 0$, respectively) for all $\xi$ in the domain of $\phi$ such that $0< \varphi(\xi) < \overline \rho$.
\end{proposition}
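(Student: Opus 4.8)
The plan is to prove strict monotonicity by contradiction, using the two structural facts that make the region $\{0<\varphi<\orho\}$ special: there $g(\varphi)>0$ by {\rm (g)}, and there $D(\varphi)>0$ by {\rm (D)}, so that equation \eqref{e:tws} is non-degenerate. I treat a profile \emph{from} $\orho$; the case \emph{to} $\orho$ is entirely symmetric. First I would pin down the direction of the weak monotonicity granted by Definition \ref{d:swf}. Since $\varphi$ takes values in $[0,\orho)$, is weakly monotone and satisfies $\varphi(\xi)\to\orho$ as $\xi\to-\infty$, a weakly increasing $\varphi$ would give $\varphi(\xi)\ge\lim_{\eta\to-\infty}\varphi(\eta)=\orho$ for every $\xi$, contradicting $\varphi<\orho$. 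Hence $\varphi$ is weakly decreasing, i.e. $\varphi'(\xi)\le0$ wherever it exists, and the whole task reduces to excluding an interior critical point: showing $\varphi'(\xi_0)\ne0$ at every $\xi_0$ with $0<\varphi(\xi_0)<\orho$.

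Next, on the open set $\{0<\varphi<\orho\}$ I would upgrade the regularity so that I may evaluate the equation at a single point. Writing \eqref{e:tws} as $\left(D(\varphi)\varphi'\right)'=\left(h(\varphi)-c\right)\varphi'-g(\varphi)$, the right-hand side is continuous; since the profile is classical, $w:=D(\varphi)\varphi'$ is absolutely continuous and its almost-everywhere derivative coincides with this continuous function, so $w\in C^1$. As $D(\varphi)>0$ there, $\varphi'=w/D(\varphi)$ is $C^1$ and $\varphi\in C^2$, with \eqref{e:tws} holding pointwise.

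The core step is then the following. Suppose $\varphi'(\xi_0)=0$ with $0<\varphi(\xi_0)<\orho$. Then $w(\xi_0)=D(\varphi(\xi_0))\varphi'(\xi_0)=0$, while the equation gives $w'(\xi_0)=\left(h(\varphi(\xi_0))-c\right)\cdot 0-g(\varphi(\xi_0))=-g(\varphi(\xi_0))<0$. Thus $w$ crosses zero transversally with strictly negative slope, so $w>0$ just to the left of $\xi_0$ and $w<0$ just to the right; since $D(\varphi)>0$ near $\xi_0$, the function $\varphi'$ shares the sign of $w$, which makes $\xi_0$ a strict local maximum of $\varphi$ and contradicts weak monotonicity. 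Hence $\varphi'$ cannot vanish on $\{0<\varphi<\orho\}$, and combined with $\varphi'\le0$ this yields $\varphi'<0$ there.

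The step I expect to be the main obstacle is precisely the exclusion of a \emph{tangential} zero of $\varphi'$: a priori $\varphi'$ could touch $0$ without changing sign, which would be compatible with monotonicity. The identity $w'(\xi_0)=-g(\varphi(\xi_0))$ is exactly what forbids this, since the second-order term is prevented from vanishing by $g(\varphi(\xi_0))>0$; this is why the sign condition on $g$ away from $\orho$, rather than mere non-negativity, is essential. The only further technical care is the regularity bootstrap, which is harmless because $D$ stays bounded away from $0$ on the relevant region.
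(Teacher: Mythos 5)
Your proof is correct and follows essentially the same route as the paper: the paper's proof also introduces $T(\xi)=D\left(\varphi(\xi)\right)\varphi^{\,\prime}(\xi)$, computes $T^{\,\prime}(\xi_0)=-g\left(\varphi(\xi_0)\right)<0$ at a putative critical point via \eqref{e:tws}, and concludes that $\xi_0$ would be a local maximum. The only cosmetic difference is the final contradiction — you invoke the weak monotonicity from Definition \ref{d:swf} directly, while the paper uses the boundary condition $\varphi(-\infty)=\orho$ to produce a local minimum point that the same transversality argument forbids — and your explicit regularity bootstrap merely spells out what the paper records in Remark \ref{rem:smoothphi}.
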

\begin{proof}
We only consider the case of a semi-wavefront $\phi$ from $\overline{\rho}$; the other case is analogous.

Let $\varphi$ be defined on the half-line $(-\infty, \varpi)$, with $\varpi\in \R$;  we assume that there exists $\xi_0 \in (-\infty, \varpi)$ with $\varphi(\xi_0)\in (0, \overline \rho)$ such that $\varphi^{\, \prime}(\xi_0)=0$. We denote
\[
T(\xi):=D\left(\varphi(\xi)\right)\varphi^{\, \prime}(\xi), \quad  \xi \in (-\infty, \varpi).
\]
We have that  $T(\xi_0)=0$; by \eqref{e:tws}, condition (g) and the assumption $\phi(\xi_0)<\overline{\rho}$, we deduce $T^{\, \prime}(\xi_0)=-g(\varphi(\xi_0))<0$. Hence, we have that  $(\xi_0-\xi)T(\xi)>0$ for $\xi\ne \xi_0$ in a neighborhood of $\xi_0$. By condition (D), it follows that  $\xi_0$ is a local maximum point of $\varphi$. The boundary condition $\varphi(-\infty)=\overline \rho$ then implies that there exists a local minimum point $\xi_1<\xi_0$ of $\varphi$, in contradiction with the previous discussion. Hence $\varphi(\xi)>0$ for  $\xi \in (-\infty, \varpi)$ and $\varphi^{\, \prime}(\xi)<0$ whenever $0<\varphi(\xi) < \overline \rho$.
\end{proof}

\begin{remark}\label{rem:smoothphi}
{\rm Let $\varphi(\xi)$ be a semi-wavefront for \eqref{e:E} from (to) $\overline \rho$. Proposition \ref{p:monot} shows that there exists an interval $I \subseteq (-\infty, \varpi)$ (resp., $I \subseteq (\varpi, +\infty)$), such that $0<\varphi(\xi)<\overline \rho$ for $\xi \in I$. By arguing on the smoothness of the terms in \eqref{e:tws} it is not difficult to show that $\varphi \in C^2(I)$.
}\end{remark}

\begin{remark}\label{rem:inverse}{\rm Proposition \ref{p:monot} implies that every semi-wavefront $\phi(\xi)$ has inverse $\xi=\xi(\varphi)$ defined on  $[0,\overline \rho)$ and $\xi(0) = \varpi$. Moreover, if $\varphi(\xi)$ is a wave profile from $\overline \rho$ we have that either $\xi(\overline{\rho}^{\, -}) = \xi_0\in\R$ or, if $\phi$ is strictly monotonic, that $\xi(\overline{\rho}^{\, -}) = -\infty$; an analogous property holds if $\varphi(\xi)$ is a wave profile to $\overline \rho$.}\end{remark}

To prove Theorem  \ref{t:semiequiv} we need the following lemma, which concerns the asymptotic behavior of semi-wavefronts.

\begin{lemma}\label{l:ab} \ Let $\varphi $ be a semi-wavefront of \eqref{e:E} from $\overline \rho$ defined on the half-line $(-\infty,\varpi)$. Then
\begin{enumerate}[(i)]

\item $\varphi^{\, \prime}(\xi) \to 0$ as $\xi \to -\infty$;

\item $D\left(\varphi(\xi)\right)\varphi^{\, \prime}(\xi) \to \ell$ as $\xi \to \varpi^{\, -}$, for some real value $\ell\le 0$.
\end{enumerate}
\end{lemma}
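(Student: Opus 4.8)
The plan is to work throughout with the single quantity $T(\xi):=D\left(\varphi(\xi)\right)\varphi^{\,\prime}(\xi)$ and to read off both limits from one integrated form of \eqref{e:tws}. First I record the qualitative picture that the earlier results give for a semi-wavefront $\varphi$ from $\overline\rho$ on $(-\infty,\varpi)$: by Proposition \ref{p:monot} the profile is strictly decreasing on the set where $0<\varphi<\overline\rho$, it satisfies $\varphi(\xi)>0$ on $(-\infty,\varpi)$ and $\varphi(\varpi)=0$ because the domain is maximal, while $\varphi(\xi)\to\overline\rho$ as $\xi\to-\infty$; here $\varpi\in\R$ is finite. In particular $T(\xi)\le 0$ throughout, and $\varphi\in C^2$ on the open set where $0<\varphi<\overline\rho$ by Remark \ref{rem:smoothphi}, so $T$ is continuous there.

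The key step is to integrate \eqref{e:tws} over an interval $[\xi,\xi_0]\subset(-\infty,\varpi)$ on which $0<\varphi<\overline\rho$. Using $h=f'$ to rewrite $\int_\xi^{\xi_0}\left(c-h(\varphi)\right)\varphi^{\,\prime}\,d\eta=c\bigl(\varphi(\xi_0)-\varphi(\xi)\bigr)-\bigl(f(\varphi(\xi_0))-f(\varphi(\xi))\bigr)$, I obtain
\[
T(\xi_0)-T(\xi)+c\bigl(\varphi(\xi_0)-\varphi(\xi)\bigr)-\bigl(f(\varphi(\xi_0))-f(\varphi(\xi))\bigr)+\int_{\xi}^{\xi_0}g\left(\varphi(\eta)\right)\,d\eta=0 .
\]
Every term here is explicit, and both claims follow by sending one endpoint to the boundary while keeping the other fixed, exploiting that $g\ge 0$ keeps the integral monotone and that $T\le 0$ forces the resulting limit to be finite.

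For \emph{(ii)} I fix $\xi$ and let $\xi_0\to\varpi^-$. Since $\varpi$ is finite and $g\circ\varphi$ is bounded, $\int_\xi^{\xi_0}g(\varphi)\,d\eta\to\int_\xi^{\varpi}g(\varphi)\,d\eta<\infty$; moreover $\varphi(\xi_0)\to 0$ and $f(\varphi(\xi_0))\to f(0)=0$. Solving the displayed identity for $T(\xi_0)$ then shows that $\lim_{\xi_0\to\varpi^-}T(\xi_0)=:\ell$ exists and is finite, and $T\le 0$ yields $\ell\le 0$, which is exactly \emph{(ii)}. For \emph{(i)} I distinguish two cases. If $\varphi\equiv\overline\rho$ on a half-line $(-\infty,\xi_*]$, then $\varphi^{\,\prime}\equiv 0$ there and the claim is immediate; otherwise $0<\varphi<\overline\rho$ on all of $(-\infty,\varpi)$, so I may let $\xi\to-\infty$ with $\xi_0$ fixed. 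Now $\varphi(\xi)\to\overline\rho$, $f(\varphi(\xi))\to f(\overline\rho)$, and $\int_\xi^{\xi_0}g(\varphi)\,d\eta$ increases to some $L_g\in(0,+\infty]$; solving the identity for $T(\xi)$ shows $\lim_{\xi\to-\infty}T(\xi)$ exists in $(-\infty,+\infty]$, and the constraint $T\le 0$ forces it to be finite (so $L_g<\infty$) and nonpositive. Since $D(\overline\rho)>0$, it follows that $\varphi^{\,\prime}=T/D(\varphi)$ has a finite limit at $-\infty$; as $\int_{-\infty}^{\xi_0}|\varphi^{\,\prime}|\,d\eta=\overline\rho-\varphi(\xi_0)<\infty$, that limit must be $0$, giving \emph{(i)}.

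The step I expect to demand the most care is the passage $\xi\to-\infty$ in \emph{(i)}: a priori the source integral $\int g(\varphi)$ could diverge, and ruling this out is precisely what the combination of its monotonicity with the one-sided bound $T\le 0$ accomplishes. Once the finiteness of the limit of $T$ is secured, everything else is bookkeeping with bounded continuous data on the finite side $\varpi$ and with the monotone convergent profile on the $-\infty$ side.
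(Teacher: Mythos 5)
Your proof is correct and takes essentially the same route as the paper's: the same integrated form of \eqref{e:tws} (your $f$ coincides with the paper's $\mathcal{H}(r)=\int_0^r h$ because $f(0)=0$), with finiteness of the source integral at $-\infty$ forced by the one-sided bound $T\le 0$ from Proposition \ref{p:monot}, and the limit in \emph{(ii)} read off directly from the same identity. Your explicit dichotomy for the case $\varphi\equiv\overline\rho$ near $-\infty$ and the integrability argument $\int_{-\infty}^{\xi_0}|\varphi^{\,\prime}|\,d\eta=\overline\rho-\varphi(\xi_0)<\infty$ pinning the limit to $0$ are only minor sharpenings of the paper's briefer remarks (it concludes $\lambda=0$ from boundedness of $\varphi$), not a different argument.
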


\begin{proof} First, we prove {\em (i)}. By integrating \eqref{e:tws} in $[\xi_0, \xi] \subset (-\infty, \varpi)$ we obtain
\begin{align}
\lefteqn{D\left(\varphi(\xi_0)\right)\varphi^{\, \prime}(\xi_0)=}
\label{e:l0}
\\
&=D\left(\varphi(\xi)\right)\varphi^{\, \prime}(\xi)+c\left( \varphi(\xi)-\varphi(\xi_0)\right) - \mathcal{H}\left(\varphi(\xi)\right) +
  \mathcal{H}\left(\varphi(\xi_0)\right)+\ds\int_{\xi_0}^{\xi} g\left(\varphi(s)\right)\, ds,
\label{e:l}
\end{align}
where $\mathcal{H}(r):=\int_0^{r}h(s)\, ds$  for $r \in [0, \overline \rho]$.
If $\xi_0 \to -\infty$, then  $\mathcal{H}\left(\varphi(\xi_0)\right) \to \mathcal{H}(\overline \rho)$; in addition, according to (g), the limit
\begin{equation}\label{e:limit}
{\lim_{\xi_0 \to -\infty}}\int_{\xi_0}^{\xi}g\left(\varphi(s)\right)\, ds
\end{equation}
exists. Since \eqref{e:l0} is negative by Proposition \ref{p:monot}\emph{(i)}, the limit \eqref{e:limit} is surely a real value. Hence, we proved the existence of
\begin{equation*}
\lim_{\xi_0 \to -\infty}D\left(\varphi(\xi_0)\right)\varphi^{\, \prime}(\xi_0)=:\lambda\in \R.
\end{equation*}
This implies that $\lim_{\xi_0 \to -\infty}\varphi^{\, \prime}(\xi_0) = \lambda/D(\overline \rho)$ and, since $\varphi$ is bounded, we conclude that $\lambda =0$. This proves {\em (i)}.

Now, we prove {\em (ii)}. By \eqref{e:l0}-\eqref{e:l}, it is immediate to see that the limit of $D\left(\varphi(\xi)\right)\varphi^{\, \prime}(\xi)$ for $\xi\to\varpi^-$ exists and it is a value in $(-\infty, 0]$. The lemma is completely proved.
\end{proof}

An analogous result can be easily proved if $\phi$ is a semi-wavefront to $\orho$.
Now, we can prove Theorem \ref{t:semiequiv}.

\smallskip

\begin{proofof}{Theorem \ref{t:semiequiv}}
Let $\varphi$ be a semi-wavefront of \eqref{e:E} with speed $c\in \mathbb{R}$ from $\overline \rho$; by Remark \ref{rem:inverse} we denote by $\xi(\varphi)$ its inverse function, which is defined at least for $\varphi \in [0, \overline \rho)$. The function $z(\varphi)=D(\varphi)\varphi^{\, \prime}\left(\xi(\varphi)\right)$ clearly satisfies the first equation in \eqref{e:fo} for the same $c$; moreover, $z(\phi)<0$ for $\phi\in(0,\orho)$ by Proposition \ref{p:monot}, $z(\orho)=0$ by Lemma \ref{l:ab}{\em (i)} and $z(0^+)\le0$ by Lemma \ref{l:ab}{\em (ii)}. Therefore $z$ satisfies problem \eqref{e:fo}.

\smallskip Conversely, let $z(\varphi)$ be a solution of \eqref{e:fo} for some $c\in \mathbb{R} $ and $\varphi(\xi)$ the solution of the initial-value problem
\begin{equation}
\left\{
\begin{array}{l}
\varphi^{\, \prime}(\xi)=\frac{z(\varphi)}{D(\varphi)},\\
\varphi(0)=\frac{\overline \rho}{2},
\end{array}
\right.
\label{e:phirho2}
\end{equation}
in its maximal existence interval $(\alpha, \varpi)$; this means that $\phi$ satisfies
\begin{equation*}
\lim_{\xi \to \alpha^+} \varphi(\xi)=\overline \rho, \qquad \qquad \lim_{\xi \to \varpi^-} \varphi(\xi)=0.
\end{equation*}
If there exists $\hat \alpha \in (\alpha,\varpi)$ satisfying $\varphi(\hat \alpha)=\overline \rho$, by condition (D) and \eqref{e:phirho2} we deduce
\begin{equation*}
\lim_{\xi \to \hat \alpha^+}\varphi^{\, \prime}(\xi)=\lim_{\varphi \to \overline \rho^{\,-}}\frac{z(\varphi)}{D(\varphi)}=0.
\end{equation*}
Here, we used the assumption $D(\orho)>0$, which is contained in (D). Hence, we can continue $\varphi(\xi)$ to the left of $\hat \alpha$ with $\overline{\rho}$ in a differentiable way; see what we pointed out below the statement of Theorem \ref{t:semi}. Therefore we can assume that $\alpha=-\infty$ and then $\varphi(-\infty)=\overline \rho$.

To complete the proof we need to show that the semi-wavefront is strict, i.e., that $\varpi$ is finite; the proof depends on the values of $c$.

In case \D{0}, we always have $z(0^+)<0$ for all $c$; then, by \eqref{e:phirho2}, we obtain that
\begin{equation}\label{e:z(0)D0}
\lim_{\xi \to \varpi^{-}}\varphi^{\, \prime}(\xi)=\lim_{\varphi \to 0^+}\frac{z(\varphi)}{D(\varphi)}=\frac{z(0^+)}{D(0)}<0.
\end{equation}
In particular, we have that $\varpi\in\R$ and the slope of the semi-wavefront never equals $-\infty$. Then, we focus on cases \D{1} and \D{2}.

\smallskip

\noindent {\em (a)} $c<c^*$. In this case Theorem \ref{t:fosemi} implies $z(0)<0$; by \eqref{e:phirho2} we deduce that, in both cases \D{1} and \D{2},
\begin{equation}\label{e:inomega}
 \lim_{\xi \to \varpi^-}\varphi^{\, \prime}(\xi)=\lim_{\varphi \to 0^+}\frac{z(\varphi)}{D(\varphi)}=-\infty,
\end{equation}
 and hence $\varpi \in \mathbb{R}$.

\smallskip

\noindent {\em (b)} $c=c^*>h(0)$. In case \D{1}, by definition of derivative we deduce as above that
    \[
    \lim_{\xi\to\varpi^-}\phi'(\xi) = \frac{r_-{(c^*)}}{\dot D(0)}.
    \]
    In case \D{2} we have $z(0)=0$ and $\dot z(0)=h(0)-c^*<0$ by Theorem \ref{t:fosemi} and Proposition \ref{p:slopePD}, respectively. Since $\dot D(0)=0$, we deduce again \eqref{e:inomega} and hence $\varpi\in \R$ in both cases.

\smallskip

\noindent {\em (c)} $c>c^*$.
In case \D{1} we have
\[
z(0)=0\quad \hbox{ and } \quad \dot z(0)=r_+(c).
\]
Then,
\[
\lim_{\xi\to\varpi^-}\phi'(\xi) = \frac{r_+{(c)}}{\dot D(0)}
\]
and so $\varpi$ is finite. In case \D{2}, Theorem \ref{t:fosemi} and Proposition \ref{p:slopePD} imply
\begin{equation}\label{e:dotz0}
z(0)=0\quad \hbox{ and } \quad \dot z(0)=0.
\end{equation}
The situation is more delicate than in the previous cases, since  we need to construct suitable lower- and upper-solutions of \eqref{e:zeq} in a sharp way.

Fix $\varepsilon >0$ and denote
$$
\eta(\varphi):=-\frac{g(0)}{c -h(0)-\varepsilon g(0)}D(\varphi).
$$
Since $g(0)>0$, the function  $\eta(\varphi)$ is defined and negative on all $ (0, \overline \rho)$ for every sufficiently small $\varepsilon $.  Moreover, as $\phi\to0^+$ we have both $\dot \eta(\varphi)\to 0$, by $\dot D(0)$ in \D{2}, and
$$
h(\varphi)-c-\frac{D(\varphi)g(\varphi)}{\eta(\varphi)}=h(\varphi)-c+\frac{c -h(0)-\varepsilon g(0)}{g(0)}g(\varphi) \to -\varepsilon g(0)<0.
$$
Then, we can find  $\sigma \in(0, \overline \rho]$ such that
$$
\dot \eta(\varphi)> h(\varphi)-c-\frac{D(\varphi)g(\varphi)}{\eta(\varphi)}, \qquad \text{ for } \varphi \in (0, \sigma],
$$
i.e. $\eta(\varphi)$ is a strict upper-solution for \eqref{e:zeq} on $(0, \sigma]$.

By \eqref{e:dotz0} and the mean value Theorem, there is a sequence $\{\varphi_n\}_n \subset (0, \overline \rho)$, with $\varphi_n \to 0^+$, such that $\dot z(\varphi_n) \to 0$; this implies that
\begin{equation}\label{e:phin}
\frac{D(\varphi_n)g(\varphi_n)}{z(\varphi_n)} \to h(0)-c
\end{equation}
when $n \to \infty$.  Consequently we have
\begin{align*}
\displaystyle{\lim_{n \to \infty}} \frac{\eta(\varphi_n)}{z(\varphi_n)} & = \displaystyle{\lim_{n \to \infty}}-\frac{g(0)}{c -h(0)-\varepsilon g(0)}\frac{D(\varphi_n)}{z(\varphi_n)}
\\
& = -\frac{g(0)}{c -h(0)-\varepsilon g(0)}\left( - \frac{c-h(0)}{g(0)}\right)
 = \frac{c -h(0)}{c -h(0)-\varepsilon g(0)}>1.
\end{align*}
Hence, we can find $\hat{\sigma} \in (0, \sigma]$ such that $z(\hat \sigma)>\eta(\hat \sigma)$ and by Lemma \ref{l:keyif}{\em (2.ii)} we conclude that $z(\varphi)>\eta\emph{}(\varphi)$ on all $(0, \hat \sigma)$. Then
\begin{equation}\label{e:bd}
\frac{D(\varphi)}{z(\varphi)}< \frac{D(\varphi)}{\eta(\varphi)}=-\frac{c-h(0)}{g(0)}+\varepsilon, \qquad \varphi \in (0, \hat \sigma).
\end{equation}
We proceed in an analogous way with lower-solutions. Consider the function
$$
\omega(\varphi):=-\frac{g(0)}{c -h(0)+\varepsilon g(0)}D(\varphi),
$$
which is defined and negative on all $ (0, \overline \rho)$. For $\varphi \to 0^+$ we have that  $\dot \omega(\varphi)\to 0$ and
$$
h(\varphi)-c-\frac{D(\varphi)g(\varphi)}{\omega(\varphi)}=h(\varphi)-c+\frac{c -h(0)+\varepsilon g(0)}{g(0)}g(\varphi) \to \varepsilon g(0)>0.
$$
Then, we can find  $\mu \in(0, \overline \rho]$ such that
$$
\dot \omega(\varphi)< h(\varphi)-c-\frac{D(\varphi)g(\varphi)}{\omega(\varphi)}, \qquad \varphi \in (0, \mu],
$$
i.e. $\omega(\varphi)$  is a strict lower-solution for the equation in \eqref{e:fo} on $ (0, \mu]$.  Moreover, if $(\varphi_n)_n $ satisfies \eqref{e:phin}, we have that
\begin{align*}
\displaystyle{\lim_{n \to \infty}} \frac{\omega(\varphi_n)}{z(\varphi_n)} & = \displaystyle{\lim_{n \to \infty}} \left(-\frac{g(0)}{c -h(0)+\varepsilon g(0)}\frac{D(\varphi_n)}{z(\varphi_n)}\right)
\\
& = -\frac{g(0)}{c -h(0)+\varepsilon g(0)} \left( -\frac{c-h(0)}{g(0)}\right)= \frac{c -h(0)}{c -h(0)+\varepsilon g(0)}<1.
\end{align*}
Hence we can find $\hat{\mu} \in (0, \mu]$ such that $z(\hat \mu)<\omega(\hat \mu)$ and according to Lemma \ref{l:keyif}{\em (2.i)} we conclude that $z(\varphi)<\omega(\varphi)$ on all $(0, \hat \mu)$. Then
\begin{equation}\label{e:bs}
\frac{D(\varphi)}{z(\varphi)}>\frac{D(\varphi)}{\omega(\varphi)}=-\frac{c-h(0)}{g(0)}-\varepsilon, \qquad \varphi \in (0, \hat \mu).
\end{equation}
By combining \eqref{e:bd} with \eqref{e:bs}, and since $\varepsilon$ is arbitrary, we conclude that
\begin{equation}\label{e:slopec}
\lim_{\varphi \to 0^+}\frac{D(\varphi)}{z(\varphi)}=-\frac{c-h(0)}{g(0)}.
\end{equation}
We notice that the limit in \eqref{e:slopec} is nontrivial since both $D(0)=\dot D(0)=0$ by \D{2} and $z(0)=\dot z(0)=0$ by Theorem \ref{t:fosemi} and \eqref{e:dotz0}. Formula \eqref{e:slopec} implies that $\varpi$ is finite also in this case and that $\varphi^{\, \prime}(\varpi^{\, -})=-\frac{g(0)}{c-h(0)}$.

\smallskip

\noindent {\em (d)} $c=c^*=h(0)$. Because of \eqref{e:cs}, this case does not occur under \D{1} but only under \D{2}. For $\varepsilon >0$ we denote  $\omega(\varphi):=-\frac{D(\varphi)}{\varepsilon}$ for $\varphi \in (0, \overline \rho)$. Reasoning as in \emph{(c)}, it is possible to find $\mu \in (0, \overline \rho]$ such that $\omega(\varphi)$ is a strict lower-solution for the equation in \eqref{e:fo} on $(0, \mu]$. Moreover, by using the sequence $\{\varphi_n\}$ that we exploited to prove \eqref{e:phin}, we have
\begin{equation*}
0=\lim_{n \to \infty}\dot z(\varphi_n)=\lim_{n \to \infty} \left(h(\varphi_n)-c^*-\frac{D(\varphi_n)g(\varphi_n)}{z(\varphi_n)}\right).
\end{equation*}
Since $h$ is continuous with $h(0)=c^*$ and according to (g), we obtain
\begin{equation*}
\lim_{n \to \infty}\frac{D(\varphi_n)}{z(\varphi_n)}=0.
\end{equation*}
It implies
\begin{equation*}
z(\varphi_n)< -\frac{D(\varphi_n)}{\varepsilon}=\omega(\varphi_n),
\end{equation*}
for sufficiently large $n$. Since $\varphi_n \to 0^+$, it is then possible to find $\hat \mu \in (0, \mu]$ satisfying
$z(\hat \mu)<\omega(\hat \mu)$. By Lemma \ref{l:keyif}{\em (2.i)}, we conclude that $z(\varphi)<\omega(\varphi)$ for $\varphi \in (0, \hat \mu]$ and hence
$$
-\varepsilon<\frac{D(\varphi)}{\omega(\varphi)}< \frac{D(\varphi)}{z(\varphi)}<0,\quad \varphi \in (0, \hat \mu].
$$
Consequently, we have
\begin{equation}\label{e:ed}
\lim_{\varphi \to 0^+}\frac{D(\varphi)}{z(\varphi)}=0.
\end{equation}
Then, we have again $\varpi \in \R$ and $\phi'(\varpi^-)=-\infty$.
\end{proofof}

\section{Proof of the main results}\label{s:proofs}
\setcounter{equation}{0}

In this section we finally prove the theorems stated in Section \ref{s:main}.

\begin{proofof}{Theorem \ref{t:swfwg}}
Consider the new equation
\begin{equation}\label{e:Ewg0}
\rho_t + \bar f(\rho)_x=\left( \bar D(\rho)\rho_x\right)_x,
\qquad (x,t)\in\R\times[0,+\infty),
\end{equation}
for $\bar f(\rho) = f(\orho-\rho)-f(\orho)$ and $\bar D(\rho) = D(\orho-\rho)$.
Notice that
\begin{equation*}
\lim_{s \to 0^+}\frac{\bar f(s)}{s} = -\lim_{s \to 0^+}h(\orho-s) = -h(\orho)
\end{equation*}
and define $H(s)=-\bar f(s)  - h(\orho)s$, for $s\in [0, \overline \rho]$. So, we can apply \cite[Theorem 5.1]{GK} and conclude that equation \eqref{e:Ewg0} has exactly one semi-wavefront (say $\psi(\zeta)$, $\zeta \in (\omega, +\infty)$) decreasing to $0$ for $c > -h(\orho)$, exactly one such solution for $c= -h(\orho)$, provided that $H(s)>0$ for $s$ in a right neighborhood of $0$, and no such solutions for $c<-h(\orho)$. Since equation \eqref{e:Ewg0} can be equivalently written as $\rho_t - h(\orho-\rho)\rho_x=\left( D(\orho-\rho)\rho_x\right)_x$, it is clear (see equation \eqref{e:tws}) that the function $\psi(\zeta)$ is a solution of
\begin{equation*}
 \left( D(\orho-\psi)\psi^{\, \prime}\right)^{\, \prime}+\left(c+h(\orho-\psi)\right)\psi^{\, \prime}=0, \qquad {}^{\, \prime}=\frac{d}{d\zeta},
\end{equation*}
for $\zeta \in (\omega, +\infty)$. Let $\xi := -\zeta \in (-\infty,\varpi)$ with $\varpi:= -\omega$, and $\varphi(\xi)=:\overline{\rho} -\psi(\zeta)$; the function the  $\varphi(\xi)$  satisfies $\varphi(\xi) \to \overline{\rho}$ as $\xi \to -\infty$ and also
\begin{equation*}
 \left( D(\varphi)\varphi^{\, \prime}\right)^{\, \prime}+\left(-c-h(\varphi)\right)\varphi^{\, \prime}=0, \qquad \xi \in (-\infty,\varpi) \quad \text{with }{}^{\, \prime}=\frac{d}{d\xi}.
\end{equation*}
We obtained that $\varphi(\xi)$ is a semi-wavefront of \eqref{e:Ewg} from $\orho$ with wave speed $-c$ and also the converse is true, i.e. to every  semi-wavefront of \eqref{e:Ewg} there corresponds one of \eqref{e:Ewg0}. This proves the statements concerning the existence of semi-wavefronts from $\orho$ as well as their uniqueness up to shifts. The results about semi-wavefronts to $\orho$ are easily deduced arguing as above or by a change of variables as in the proof of Theorem \ref{t:semi}. At last, the smoothness property follows by Remark \ref{rem:smoothphi}.
\end{proofof}

\begin{proofof}{Theorem \ref{t:semi}} \ The existence and uniqueness (up to shifts) of classical semi-wavefronts {\em from} $\overline{\rho}$ is a direct consequence of Theorems \ref{t:fosemi} and \ref{t:semiequiv}.

\smallskip

Now, we show the existence of a unique (up to shifts) semi-wavefront {\em to} $\overline \rho$ for every $c\in\R$. Given $c \in \R$, consider the semi-wavefront from $\overline \rho$ of the equation
\begin{equation}\label{e:ER}
\rho_t-h(\rho)\rho_x=\left(D(\rho)\rho_x\right)_x+g(\rho), \qquad (x,t)\in\R\times[0,\infty),
\end{equation}
with speed $-c$ and profile $\psi(\xi)$ satisfying $\psi(0)=0$. As already remarked in the Introduction, the profile $\psi(\xi)$ is a solution of
\begin{equation}\label{e:R}
\left(D\left(\psi(\xi)\right)\psi^{\, \prime}(\xi)\right)^{\, \prime} + \left(-c+h\left(\psi(\xi)\right)\right)\psi^{\, \prime}(\xi) + g\left(\psi(\xi)\right)=0, \qquad  \xi \in (-\infty, 0).
\end{equation}
We define $\varphi(\xi):=\psi (-\xi)$ for $\xi \in (0, +\infty)$. We notice that $\varphi^{\, \prime}(0^{\, +})>0$
by \eqref{e:slope0}--\eqref{e:slope2} and that $\varphi(\xi) \to\overline \rho$ as $\xi \to +\infty$. Moreover, for $\xi \in (0, +\infty)$ and ${}'=d/d\xi$, we have that
\begin{equation*}
\begin{array}{rl}
\left(D\left(\varphi(\xi)\right)\varphi^{\, \prime}(\xi)\right)^{\, \prime}=&
-\left(D\left(\psi(-\xi)\right)\psi^{\, \prime}(-\xi)\right)^{\, \prime}
\\
=&-\left(-c+h\left(\psi(-\xi)\right)\right)\psi^{\, \prime}(-\xi) - g\left(\psi(-\xi)\right)\\
\\
=&-\left(c-h\left(\varphi(\xi)\right)\right)\varphi^{\, \prime}(\xi) - g\left(\varphi(\xi)\right).
\end{array}
\end{equation*}
Hence the function $\varphi$ satisfies \eqref{e:tws} on all $(0, +\infty)$ and then it is a semi-wavefront of \eqref{e:E} to $\overline \rho$.

About uniqueness, we argue conversely: starting from a semi-wavefront to $\overline \rho$ of \eqref{e:E} and reasoning as before, we obtain a semi-wavefront from $\overline \rho$ of \eqref{e:ER} with opposite speed and with $-h$ replacing $h$. Therefore, up to shifts, equation \eqref{e:E} has exactly one semi-wavefront to $\overline \rho$ for every wave speed.

The smoothness of the semi-wavefronts follows by Remark \ref{rem:smoothphi}; formulas \eqref{e:slope0}--\eqref{e:slope2} follow by \eqref{e:z(0)D0}, \eqref{e:inomega}, \eqref{e:slopec} and \eqref{e:ed}.

At last, we are left with the proof of \eqref{e:remark}. We claim that
\begin{equation}\label{e:ClaimLimit}
\lim_{\xi \to \varpi^-} \left(\varphi_1^{\, \prime}(\xi)-\varphi_2^{\, \prime} (\xi)\right)\in [-\infty, 0).
\end{equation}
Let us briefly show how \eqref{e:ClaimLimit} implies \eqref{e:remark}. Formula \eqref{e:ClaimLimit} implies $\varphi_1^{\, \prime}<\varphi_2^{\, \prime}$ in a left neighborhood  $I$ of $\varpi$; by applying the Mean Value Theorem to $\varphi_1-\varphi_2$, we get estimate \eqref{e:remark} in $I$.
Assume by contradiction that there exists $\overline \xi \in (-\infty, \varpi)$ satisfying  $\varphi_1(\overline \xi)=\varphi_2(\overline \xi)=:\overline \varphi \in (0, \orho)$; without loss of generality we can suppose
\begin{equation}\label{e:ss}
\varphi_2(\xi)<\varphi_1(\xi), \qquad \text{for  } \xi \in (\overline \xi, \varpi).
\end{equation}
By Lemma \ref{l:slope z} we get
$$
D(\overline \varphi)\varphi_1^{\, \prime}(\overline \xi) = D(\overline \varphi)\varphi_1^{\, \prime}\left(\xi_1(\overline \varphi)\right) = z_1(\overline \varphi)<z_2(\overline \varphi) = D(\overline \varphi)\varphi_2^{\, \prime}\left(\xi_2(\overline \varphi )\right)=D(\overline \varphi)\varphi_2^{\, \prime}(\overline \xi),
$$
where $\xi_1$ and $\xi_2$ denote the inverse functions of $\phi_1$, $\phi_2$, respectively, see Remark \ref{rem:inverse}. We deduce that $\varphi_1^{\, \prime}(\overline \xi)<\varphi_2^{\, \prime}(\overline \xi)$, which contradicts \eqref{e:ss}. This would prove \eqref{e:remark}.

The proof of \eqref{e:ClaimLimit} is split into four parts, see the proof of Theorem \ref{t:semiequiv}.

\begin{enumerate}[{\em (a)}]
\item \emph{Assume \D{0}}.  The definition of $z(\varphi)$ implies that
$$
\lim_{\xi \to \varpi^-} \left(\varphi_1^{\, \prime}(\xi)-\varphi_2^{\, \prime}(\xi)\right) = \lim_{\varphi \to 0^+}\frac{z_1(\varphi)-z_2(\varphi)}{D(\varphi)}=\frac{z_1(0)-z_2(0)}{D(0)}.
$$
By estimate \eqref{e:ger} and Theorem \ref{t:fosemi} we get $z_1(0)\le z_2(0)<0$. If $z_1(0)=z_2(0)$, then $\dot z_1(0)=h(0)-c_1 - \frac{D(0)g(0)}{z_1(0)}>h(0)-c_2 = \dot z_2(0) -\frac{D(0)g(0)}{z_2(0)}$, in contradiction with \eqref{e:ger}. Hence, $z_1(0)<z_2(0)$ and \eqref{e:ClaimLimit} holds.

\item \emph{Assume \D{1} or \D{2}, with $c_1<c^*$.} By Theorem \ref{t:fosemi} we have $z_1(0)<0$.

    If $c_2 \le c^*$, by arguing as in case \emph{(a)}, we conclude that
    $$
    \lim_{\xi \to \varpi^-} \left(\varphi_1^{\, \prime}(\xi)-\varphi_2^{\, \prime}(\xi)\right) = \lim_{\varphi \to 0^+}\frac{z_1(\varphi)-z_2(\varphi)}{D(\varphi)}=-\infty.
    $$

    If $c_2 > c^*$, then by \eqref{e:slope1} or \eqref{e:slope2} (in case \D{1} or \D{2}, respectively) we have that $\varphi_2^{\, \prime}(\xi)$ has a finite limit when $\xi \to \varpi^-$ and then
    \begin{equation*}
    \lim_{\xi \to \varpi^-}\left( \varphi_1^{\, \prime}(\xi)-\varphi_2^{\, \prime}(\xi)\right) = \lim_{\varphi \to 0^+} \frac{z_1(\varphi)}{D(\varphi)} - \lim_{\xi \to \varpi^-} \varphi_2^{\, \prime}(\xi)=-\infty.
    \end{equation*}

\item \emph{Assume \D{1}, with $c_1\ge c^*$.} By \eqref{e:r_pm}, recall that $r_-(c^*)<r_+(c^*)$ and also that $r_+(c)$ is increasing for $c\ge c^*$. Then, according to \eqref{e:slope1}, we get
    \begin{equation*}
    \lim_{\xi \to \varpi^-}\left(\varphi_1^{\, \prime}(\xi)-\varphi_2^{\, \prime}(\xi)\right) = \left\{ \begin{array}{rl} \frac{r_-(c^*)-r_+(c_2)}{\dot D(0)}<0, & \text{if } c_1=c^*,
    \\[2mm]
    \frac{r_+(c_1)-r_+(c_2)}{\dot D(0)}<0, & \text{if } c_1>c^*.\end{array}\right.
    \end{equation*}

\item \emph{Assume \D{2}, with $c_1\ge c^*$.} The estimate \eqref{e:cs} implies that  $c_1\ge h(0)$; hence, from \eqref{e:slope2} we have
    \begin{equation*}
    \lim_{\xi \to \varpi^-} \left(\varphi_1^{\, \prime}(\xi)-\varphi_2^{\, \prime}(\xi)\right) = \left\{
    \begin{array}{rl} -\infty, & \text{if } c_1=c^*,
    \\
    \frac{g(0)}{\left(c_1-h(0)\right)\left(c_2-h(0)\right)}\,(c_1-c_2)<0, & \text{if } c_1>c^*.\end{array}\right.
    \end{equation*}
\end{enumerate}
This completes the proof of \eqref{e:ClaimLimit} and then of Theorem \ref{t:semi}.
\end{proofof}


\begin{proofof}{Theorem \ref{t:strictly}} \ We prove the result only in the case of semi-wavefronts from $\overline \rho$; the same conclusions can be easily drawn for semi-wavefronts to $\overline \rho$ with the change of variables exploited in the proof of Theorem \ref{t:semi}. Moreover, we assume without any loss of generality that $\rho_1=0$ both in \eqref{e:pend g} and \eqref{e:g sub}: if $\rho_1>0$, it is sufficient either to increase $L$ in \eqref{e:pend g} or decrease it in \eqref{e:g sub} to a new constant $\overline{L}$ such that both \eqref{e:pend g} and \eqref{e:g sub} hold in $[0,\overline{\rho}]$ with $L$ replaced by $\overline{L}$.


Let $\varphi$ be a semi-wavefront in $(-\infty, \varpi)$ with speed $c$. Denote by $z(\varphi)$, $\varphi \in [0, \overline \rho]$, the solution of \eqref{e:fo} with the same wave speed $c$ provided by Theorem \ref{t:fosemi}. Let $\xi(\phi)$ be the inverse function of $\phi$, see Remark \ref{rem:inverse}, and denote
\begin{equation}\label{e:overlinexi}
\overline{\xi} := \lim_{\phi\to\orho\,^-}\xi(\phi).
\end{equation}

\smallskip

{\em Case (i).}\ For $n \in \N$ and $a>0$, we denote $\eta_n(\varphi):=a(\varphi-\overline \rho)-\frac 1n, \, \varphi\in [0, \overline \rho]$. First, we show that it is possible to find $a$, independently from $n$, such that $\eta_n$ is a strict upper-solution of \eqref{e:zeq} on $[0, \overline \rho)$ for all $n$. Indeed, by \eqref{e:pend g} (with $\rho_1=0$), we have that
\begin{equation}\label{e:hcD}
h(\varphi)-c-\frac{D(\varphi)g(\varphi)}{\eta_n(\varphi)}\le H + K\frac{L(\overline \rho -\varphi)}{a(\overline \rho - \varphi)+\frac 1n},
\end{equation}
where $H$ was defined in \eqref{e:HM} and $K:=\max_{\varphi \in [0, \overline \rho]}D(\varphi)$.
The function
\begin{equation*}
\varphi \longmapsto \frac{L(\overline \rho -\varphi)}{a(\overline \rho - \varphi)+\frac 1n}, \qquad \varphi\in [0, \overline \rho],
\end{equation*}
is strictly decreasing and then, by \eqref{e:hcD},
\begin{equation*}
h(\varphi)-c-\frac{D(\varphi)g(\varphi)}{\eta_n(\varphi)}\le H + \frac{K L\orho}{a\overline{\rho}+\frac 1n}<H+\frac{K L}{a}.
\end{equation*}
We have that $H+\frac{KL}{a}<a$ if we choose
\begin{equation}\label{e:a}
a>\frac{H+\sqrt{H^2+4KL}}{2}.
\end{equation}
With this choice, the function $\eta_n$ is a strict upper-solution of \eqref{e:zeq} in $[0, \overline \rho)$ for all $n$. This proves our claim.

Moreover, since  $z(\overline \rho^{\, -}) = 0 > -\frac 1n = \eta_n(\overline \rho)$, we can find $\hat \varphi_n \in (0, \overline \rho)$ satisfying $z(\varphi)>\eta_n(\varphi)$ for $\varphi\in [\hat \varphi_n, \overline \rho]$.
If we apply Lemma \ref{l:keyif}\emph{(2.ii)} in the remaining interval $[0, \hat \varphi_n)$ we conclude that
\begin{equation}\label{e:eta n}
z(\varphi)>\eta_n(\varphi), \qquad \text{for all }\varphi\in (0, \overline \rho] \text{ and } n\in\N.
\end{equation}
Since $D(\orho)>0$ by (D), then $\delta:= \min_{\phi\in[\orho/2,\orho]}D(\phi)>0$; as a consequence, by \eqref{e:overlinexi} we have that
\begin{align*}
\overline{\xi} - \xi\left(\frac{\overline \rho}{2}\right) &= \int_{\frac{\overline \rho}{2}}^{\overline \rho}\xi^{\, \prime}(\varphi)\, d\varphi
 = \int_{\frac{\overline \rho}{2}}^{\overline \rho}\frac{1}{\varphi^{\, \prime}(\xi(\varphi))}\, d\varphi=\int_{\frac{\overline \rho}{2}}^{\overline \rho}\frac{D(\varphi)}{z(\varphi)}\, d\varphi
\\
& < \int_{\frac{\overline \rho}{2}}^{\overline \rho}\frac{D(\varphi)}{\eta_n(\varphi)}\, d\varphi< \delta\int_{\frac{\overline \rho}{2}}^{\overline \rho}\frac{1}{a(\varphi-\overline \rho)-\frac 1n}\, d\varphi=\frac \delta a \ln \frac{2}{na\overline \rho  +2}.
\end{align*}
If we pass to the limit for $n \to \infty$ in the above lines, we see that the right-hand side tends to $-\infty$; hence, $\overline{\xi}=-\infty$.

\medskip

{\em Case (ii).}\ The proof is similar to that of Case {\em (i)} but the choice of a lower-solution (instead of an upper-solution) is more tricky. More precisely, we define $\overline h:=\min_{\varphi \in [\orho/2, \overline \rho]}\left(h(\varphi)-c\right)$, fix a value $\beta \in (\frac{\alpha+1}{2}, 1)$  and take $k>0$ satisfying
\begin{equation}\label{e:Deltak}
\frac{\delta L}{k}-k\beta\left( \frac{\overline\rho}{2}\right)^{2\beta -(\alpha+1)}>0,
\end{equation}
where $\delta$ is defined as in case {\em (i)}. For every $n \in \mathbb{N}$ with $\frac{\overline\rho}{2}<\overline \rho -\frac 1n$, we introduce the function $\omega_n(\varphi) \colon [\frac{\overline\rho}{2}, \overline \rho] \to \mathbb{R}$ defined by
  \begin{equation*}
  \omega_n(\varphi) =\left\{\begin{array}{ll}-k(\overline \rho -\frac 1n -\varphi)^{\beta} & \quad  \varphi\in  [\frac{\overline\rho}{2},
  \overline \rho -\frac 1n],\\[1mm]
  0 & \quad \varphi\in  (\overline \rho -\frac 1n,\orho].
  \end{array}
  \right.
  \end{equation*}
We claim that
\begin{equation}\label{e:claim}
\omega_n(\varphi)\ge z(\varphi), \qquad \varphi\in  [\overline\rho/2,
  \overline \rho].
\end{equation}
Indeed, since $z(\varphi)<0$ in the interval $(0, \overline \rho)$, by a continuity argument we can find $\psi_n \in (\frac{\overline \rho}{2}, \overline \rho -\frac 1n )$ such that $\omega_n(\varphi)\ge z(\varphi)$ on $[\psi_n, \overline \rho]$. If we show that $\omega_n$ is a strict lower-solution of \eqref{e:zeq} on $[\frac{\overline \rho}{2}, \psi_n]$, then we can apply Lemma \ref{l:keyif}\emph{(2.i)} in the interval $(\frac{\overline \rho}{2}, \psi_n]$ and prove \eqref{e:claim}.

According to \eqref{e:g sub} (with $\rho_1=0$), we obtain, for $\varphi \in [\frac{\overline \rho}{2}, \psi_n]$,
\begin{align}
h(\varphi)-c-\frac{D(\varphi)g(\varphi)}{\omega_n(\varphi)}
&=
h(\varphi)-c+\frac{D(\varphi)g(\varphi)}{k(\overline \rho -\frac 1n -\varphi)^{\beta}}\ge \overline h +\frac{\delta L(\overline \rho -\varphi)^{\alpha}}{k(\overline \rho -\frac 1n -\varphi)^{\beta}}\nonumber
\\
& =
\overline h +\frac{\delta L(\overline \rho -\varphi)^{\alpha}}{k(\overline \rho -\frac 1n -\varphi)^{\alpha}}\cdot \frac{1}{(\overline \rho -\frac 1n -\varphi)^{\beta-\alpha}}\nonumber
\\
& \ge
\overline h + \frac{\delta L}{k}\frac{1}{(\overline \rho -\frac 1n -\varphi)^{\beta-\alpha}}.\label{e:second}
\end{align}
%
Now, we introduce the function $\eta_n \colon [\frac{\overline\rho}{2}, \psi_n ] \to \mathbb{R}$ defined by
\begin{equation*}
 \eta_n(\varphi)=\overline h + \frac{\delta L}{k}\frac{1}{(\overline \rho -\frac 1n -\varphi)^{\beta-\alpha}}-\dot \omega_n(\varphi)
\end{equation*}
and notice that $\orho-\frac1n -\phi<\frac{\orho}{2}$ for $\varphi \in [\frac{\overline \rho}{2}, \orho-\frac1n]$; we deduce, for $\varphi \in [\frac{\overline \rho}{2}, \psi_n]$,
\begin{align}
 \frac{\delta L}{k}- k\beta\left(\overline \rho -\frac 1n -\varphi\right)^{2\beta-(1+\alpha)}
 & > \frac{\delta L}{k}-k\beta\left( \frac{\overline \rho}{2} \right)^{2\beta-(1+\alpha)},
\label{e:third}
\\
 \frac{1}{(\overline \rho -\frac 1n -\varphi)^{\beta-\alpha}}
 &> \frac{1}{\left(\frac{\overline \rho}{2} \right)^{\beta-\alpha}}.
\label{e:fourth}
\end{align}
By means of the definition of $\omega_n$ and \eqref{e:third}, \eqref{e:fourth}, \eqref{e:Deltak}, we have that, for $\varphi \in [\frac{\overline \rho}{2}, \psi_n]$,
\begin{align*}
 \eta_n(\varphi) & = \overline h + \frac{\delta L}{k}\frac{1}{(\overline \rho -\frac 1n -\varphi)^{\beta-\alpha}}- \frac{k\beta}{(\overline \rho -\frac 1n -\varphi)^{1-\beta}}
 \\
 & = \overline h + \frac{1}{(\overline \rho -\frac 1n -\varphi)^{\beta-\alpha}}\left[ \frac{\delta L}{k}- k\beta\left(\overline \rho -\frac 1n -\varphi\right)^{2\beta-(1+\alpha)}\right]\\
 & > \overline h+\frac{1}{\left(\frac{\overline \rho}{2} \right)^{\beta-\alpha}}\left[\frac{\delta L}{k}-k\beta\left(\frac{\orho}{2}\right)^{2\beta-(1+\alpha)} \right]>0,
\end{align*}
if $k$ is sufficiently small. Hence,
 \begin{equation*}
\overline h + \frac{\delta L}{k}\frac{1}{(\overline \rho -\frac 1n -\varphi)^{\beta-\alpha}}>\dot \omega_n(\varphi), \quad \phi\in\left[\frac{\overline\rho}{2}, \psi_n\right].
 \end{equation*}
Then, by \eqref{e:second}, $\omega_n$ is a strict lower-solution of \eqref{e:zeq} on $(\frac{\overline\rho}{2}, \psi_n]$ and \eqref{e:claim} is proved.

The sequence $\{\omega_n\}_n$ is  monotone and
\begin{equation*}
\lim_{n \to \infty}\omega_n(\varphi)=-k(\overline \rho-\varphi)^{\beta}:=\omega(\varphi), \quad \varphi
\in [\overline\rho/2,
  \overline \rho].
\end{equation*}
By \eqref{e:claim} we have $\omega(\varphi)\ge z(\varphi)$ for $\varphi \in [\frac{\overline\rho}{2}, \overline \rho ]$ and, as in Case \emph{(i)}, by \eqref{e:overlinexi} we get
\begin{align*}
\overline \xi  - \xi(\frac{\overline \rho}{2}) &= \int_{\frac{\overline \rho}{2}}^{\overline \rho}\xi^{\, \prime}(\varphi)\, d\varphi
 = \int_{\frac{\overline \rho}{2}}^{\overline \rho}\frac{1}{\varphi^{\, \prime}(\xi(\varphi))}\, d\varphi=\int_{\frac{\overline \rho}{2}}^{\overline \rho}\frac{D(\varphi)}{z(\varphi)}\, d\varphi
\\
& \ge \int_{\frac{\overline \rho}{2}}^{\overline \rho}\frac{D(\varphi)}{\omega(\varphi)}\, d\varphi\ge  -\frac{K}{k}\int_{\frac{\overline \rho}{2}}^{\overline \rho}\frac{1}{(\overline \rho-\varphi)^{\beta}}\, d\varphi=-\frac{K}{k(1-\beta)}\left(\frac{\overline \rho}{2} \right)^{1-\beta},
\end{align*}
where $K$ was defined below \eqref{e:hcD}. Therefore $\overline \xi \in \mathbb{R}$ and so $\varphi(\xi)\equiv \overline \rho$ for $\xi \le \overline \xi$.
\end{proofof}

\section{On the existence of global traveling-wave solutions}\label{sec:pasting}
\setcounter{equation}{0}

As we noticed in Section \ref{s:example}, the existence of a semi-wavefront solution is a notable theoretical result but global solutions can be more interesting in some applications. The existence of semi-wavefront profiles for any $c\in\R$ is a motivation to the following construction. An analogous procedure is well known and fully characterized for some dispersive equations \cite{Lenells_CH, Lenells_Classification, Lenells_R}. 

We fix a wave speed $c$ and $\varpi\in\R$. Theorem \ref{t:semi}, together with a shift argument, provides us of a semi-wavefront solution $\rho_1$ from $\orho$ with wave profile $\phi_1$ and a semi-wavefront solution $\rho_2$ to $\orho$ with wave profile $\phi_2$, both of them with the same speed $c$ and satisfying $\phi_1(\varpi)=\phi_2(\varpi)=0$. Such wave profiles are unique by the same theorem. We define, see Figure \ref{f:pasting},
\begin{equation}\label{e:phi}
\phi(\xi)=\left\{
\begin{array}{rl}
\phi_1(\xi) & \hbox{ if } \xi\le \varpi,\\
\phi_2(\xi) & \hbox{ if } \xi> \varpi.
\end{array}
\right.
\end{equation}


\begin{figure}[htbp]
\begin{picture}(100,100)(80,-10)
\setlength{\unitlength}{1pt}
\put(150,0){
\put(40,0){\vector(1,0){200}}
\put(40,60){\line(1,0){200}}
\put(240,8){\makebox(0,0){$\xi$}}
\put(130,-10){\vector(0,1){100}}
\put(137,87){\makebox(0,0){$\phi$}}
\put(137,67){\makebox(0,0){$\overline{\rho}$}}

\put(0,0){\thicklines{\qbezier(40,58)(120,58)(160,0)}}
\put(60,45){\makebox(0,0)[b]{$\phi_1$}}

\put(0,0){\thicklines{\qbezier(160,0)(200,56)(240,58)}}
\put(220,43){\makebox(0,0)[b]{$\phi_2$}}

\put(162,30){\makebox(0,0)[b]{$\phi(\xi)$}}

\put(160,-2){\makebox(0,0)[t]{$\varpi$}}

}

\end{picture}
\caption{\label{f:pasting}{Pasting two wave profiles $\phi_1$ and $\phi_2$ to get a global profile $\phi$.}}
\end{figure}
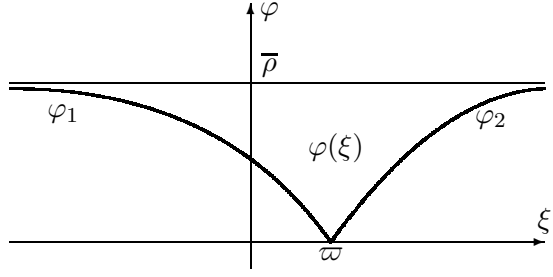

Clearly $\phi$ is a classical solution for $\xi\ne\varpi$; however, because of the discontinuity of $\phi'$ at $\varpi$, notice that the pasting \eqref{e:phi} has possibly a meaning only if the pasting occurs at the point $(\varpi,0)$  in the $(\xi,\phi)$-plane and under \D{1} or \D2. Indeed, either in the case \D{0} or in the case of a pasting at a point $(\xi_0,\rho_0)$, with $\xi_0\in\R$ and $\rho_0\in(0,\orho]$, the term $D(\phi)\phi'$ produces a Dirac mass at $\xi_0$ because $D(\rho_0)>0$; this does not make $\phi$ a (weak) solution to \eqref{e:tws}.

We denote by $c_1^*$ the threshold introduced in Theorem \ref{t:fosemi} for profiles from $\orho$. By the proof of Theorem \ref{t:semi}, we deduce that $\phi_2(\xi) = \tilde\phi_1(-\xi)$, where $\tilde\phi_1$ is the profile from $\orho$ corresponding to speed $-c$ and flux $-f$. If we denote by $c_2^*$ the threshold analogous to $c_1^*$ but for profiles {\em to} $\orho$, then $c_2^*$ satisfies \eqref{e:cs} with $-c$ and $-h$ replacing $c$ and $h$, respectively.

\begin{proposition}
Assume either \D{1} or \D{2} and let $\phi$ be as in \eqref{e:phi}. Then $\phi$ is a solution to \eqref{e:tws} if and only if $c\in[c_1^*, -c_2^*]$. 
\end{proposition}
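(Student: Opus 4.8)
The plan is to reduce the requirement that $\phi$ solve \eqref{e:tws} weakly (in the sense of Definition \ref{d:tws}) to a single matching condition on the diffusive flux at the junction point $\varpi$. Since $\phi_1$ and $\phi_2$ are classical solutions of \eqref{e:tws} on $(-\infty,\varpi)$ and on $(\varpi,+\infty)$ respectively, while $\phi$ is continuous with $\phi(\varpi)=0$, I would insert $\phi$ into \eqref{e:def-tw} with an arbitrary $\psi\in C_0^\infty(\R)$, split the integral at $\varpi$, and integrate by parts on each half-line. On each side the interior terms cancel exactly because $\phi_i$ satisfies \eqref{e:tws}, and only the boundary contribution
\[
\bigl(F(\varpi^-)-F(\varpi^+)\bigr)\psi(\varpi), \qquad F(\xi):=D\bigl(\phi(\xi)\bigr)\phi'(\xi)-f\bigl(\phi(\xi)\bigr)+c\phi(\xi),
\]
survives. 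Hence $\phi$ is a weak solution if and only if $F(\varpi^-)=F(\varpi^+)$. Because $\phi(\varpi)=0$ and $f(0)=0$, the algebraic part $-f(\phi)+c\phi$ is continuous and vanishes at $\varpi$, so the condition reduces to continuity of the diffusive flux $D(\phi)\phi'$ across $\varpi$; moreover, the one-sided limits of $D(\phi)\phi'$ are finite by Lemma \ref{l:ab}, which also guarantees that $\phi$ meets the regularity demanded in Definition \ref{d:tws}.

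Next I would evaluate the two one-sided limits. Writing $z_1$ for the solution of \eqref{e:fo} attached to the profile $\phi_1$ from $\orho$, Lemma \ref{l:ab}{\em (ii)} gives
\[
\lim_{\xi\to\varpi^-}D\bigl(\phi_1(\xi)\bigr)\phi_1'(\xi)=z_1(0^+)=:\ell_1\le 0.
\]
For the right limit I would exploit the reflection $\phi_2(\xi)=\tilde\phi_1(-\xi)$ recorded above, where $\tilde\phi_1$ is the profile from $\orho$ for the flux $-f$ and speed $-c$, with associated first-order solution $\tilde z_1$. Differentiating yields $D(\phi_2(\xi))\phi_2'(\xi)=-D(\tilde\phi_1(-\xi))\tilde\phi_1'(-\xi)$, so letting $\xi\to\varpi^+$ and applying Lemma \ref{l:ab}{\em (ii)} to $\tilde\phi_1$,
\[
\lim_{\xi\to\varpi^+}D\bigl(\phi_2(\xi)\bigr)\phi_2'(\xi)=-\tilde z_1(0^+)=:-\tilde\ell_1\ge 0.
\]
The decisive structural point is the opposite signs: the left limit is non-positive (decreasing profile) while the right limit is non-negative (increasing profile).

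Finally, the flux-matching identity $\ell_1=-\tilde\ell_1$ has a non-positive left-hand side and a non-negative right-hand side, hence holds if and only if $\ell_1=0$ and $\tilde\ell_1=0$ simultaneously. By Theorem \ref{t:fosemi}, $z_1(0^+)=0$ is equivalent to $c\ge c_1^*$; applying the same theorem to the auxiliary problem (flux $-f$, speed $-c$, threshold $c_2^*$ characterized through \eqref{e:cs} with $-h$ in place of $h$), the equality $\tilde z_1(0^+)=0$ is equivalent to $-c\ge c_2^*$, that is $c\le -c_2^*$. Intersecting the two conditions gives exactly $c\in[c_1^*,-c_2^*]$, as claimed. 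I expect the main obstacle to be bookkeeping rather than analysis: one must carry the reflection through carefully so that the right-hand limit comes out non-negative and so that the auxiliary threshold is correctly identified as $c_2^*$ (equivalently, the degeneracy of the profile {\em to} $\orho$ occurs precisely for $c\le -c_2^*$). The genuine analytic input—finiteness and sign of the one-sided flux limits—is already provided by Lemma \ref{l:ab} together with the sign normalization built into \eqref{e:fo}.
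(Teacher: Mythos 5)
Your proposal is correct and takes essentially the same route as the paper's proof: both insert $\phi$ into the weak formulation \eqref{e:def-tw}, split the integral at $\varpi$, integrate by parts on each half-line to reduce everything to the one-sided limits of the flux $D(\phi)\phi'$ (identified with $z_1(0^+)$ on the left and, via the reflection $\phi_2(\xi)=\tilde\phi_1(-\xi)$, with $-\tilde z_1(0^+)\ge 0$ on the right), and then apply the dichotomy of Theorem \ref{t:fosemi} to conclude that both limits vanish exactly when $c\ge c_1^*$ and $-c\ge c_2^*$. Your packaging of the junction condition as flux continuity $F(\varpi^-)=F(\varpi^+)$, with the opposite signs forcing both one-sided limits to be zero, is the same sign argument the paper uses when it notes that each half-line contribution is nonpositive, so their sum vanishes if and only if each does.
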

\begin{proof}
In order to prove that $\phi$ is a weak solution to \eqref{e:tws} we must verify Definition \ref{d:tws} when $I=\R$; indeed, we only have to focus on a neighborhood of the pasting point $\varpi$. Therefore, let $\psi\in C_0^\infty(\R)$ with $\psi(\varpi)\ne0$; without loss of generality we can assume that $\psi(\varpi)=1$. We split the integral
\begin{equation}\label{e:integral}
\int_\R \left\{\left(D\left(\phi(\xi)\right)\phi'(\xi) - f\left(\phi(\xi)\right) + c\phi(\xi) \right)\psi'(\xi) - g\left(\phi(\xi)\right)\psi(\xi)\right\}\,d\xi
\end{equation}
into two parts, integrating separately in $(-\infty, \varpi)$ and in $(\varpi,\infty)$. 

A simple integration by parts shows that
\begin{align*}
\int_{-\infty}^{\varpi} \left\{\left(D(\phi)\phi' - f(\phi) + c\phi \right)\psi' - g(\phi)\psi\right\}\,d\xi & = \lim_{\xi\to \varpi^-}D\left(\phi_1(\xi)\right)\phi_1^{\prime}(\xi)
\\
& = \lim_{\phi\to 0^-}z_1(\phi).
\end{align*}
By Theorem \ref{t:semiequiv} we conclude
\[
\int_{-\infty}^{\varpi}\left\{\left(D\left(\phi\right)\phi' - f\left(\phi\right) + c\phi \right)\psi' - g\left(\phi\right)\psi\right\}\,d\xi =
\left\{
\begin{array}{cl}
0 & \text{ if }c\ge c_1^*,
\\
z_1(0)<0 & \text{ if }c< c_1^*.
\end{array}
\right.
\]
Now, we consider the integration in $(\varpi,\infty)$. We argue as above but also recall the proof of 
Theorem \ref{t:semi}, see what we pointed out just above the statement of this proposition. We deduce
\[
\int_{\varpi}^{\infty}\left\{\left(D\left(\phi\right)\phi' - f\left(\phi\right) + c\phi \right)\psi' - g\left(\phi\right)\psi\right\}\,d\xi =
\left\{
\begin{array}{cl}
0 & \text{ if }-c\ge c_2^*,
\\
-z_2(0)<0 & \text{ if }-c< c_2^*.
\end{array}
\right.
\]
Therefore, the integral \eqref{e:integral} vanishes if and only if $c_1^*\le c\le -c_2^*$.
\end{proof}

From the proof of the above proposition and \eqref{e:slope1}, \eqref{e:slope2}, we deduce that, in case \D{1}, the condition $c\in[c_1^*, -c_2^*]$ is equivalent to require that both $\phi_1'(\varpi^-)$ and $\phi_2'(\varpi^+)$ are real numbers.

The thresholds $c_1^*$ and $c_2^*$ have not an explicit expression but are estimated in \eqref{e:cs}. In order that there exists $c$ in the range $[c_1^*,-c_2^*]$ we need that $c_1^* + c_2^*\le0$. 
However, by \eqref{e:cs} we see that
\[
c_1^*+c_2^*\ge 4\sqrt{\dot D(0)g(0)}\ge0,
\]
which shows that the {\em reverse} inequality holds. This leaves open only the eventuality
\begin{equation}\label{e:D2c1c2}
\text{condition \D{2} holds and $c_1^*=-c_2^*$.}
\end{equation}
In this case we are led to the unique choice $c=c_1^*=-c_2^*$. As we noted above, we cannot establish whether the case $c_1^*=-c_2^*$ can occur. Apart from this (possible) case, the construction in \eqref{e:phi} {\em never} leads to a solution of \eqref{d:tws}. In other words and apart from case \eqref{e:D2c1c2}: for any fixed $c$, if a semi-wavefront profile has finite slope when it reaches zero, then the other one has infinite slope. This is equivalent to say that $D(\phi)\phi'$ is discontinuous at $\varpi$ and then its derivative produces a Dirac mass at that point.

A comparison with the special dispersive equations considered in \cite{Lenells_Classification} is interesting. With reference to the Camassa-Holmes equation, the third-order equation for the profile is reduced to a second-order equation, which is somewhat analogous to \eqref{e:tws} with $D(\phi) = 2(\phi-c)$. For the corresponding profile, it is possible to prove that $2(\phi-c)\phi'\in W_{\rm loc}^{1,1}(\R)$, i.e. $2(\phi-c)\phi'$ is absolutely continuous \cite[Lemma 5]{Lenells_CH}; this makes possible the pasting.

\section{Diffusion with infinite slope at $0$}\label{sec:slopeinfty}
\setcounter{equation}{0}

In this last section we only require $D\in C[0, \orho]\cap C^1(0, \orho)$ and assume ({\^D}). This means that we allow $D$ to have infinite slope at $0$; the differentiability of $D$ at $\orho$ plays no role in the discussion below. Most of the previous results still hold under ({\^D}): indeed, the comparison-type techniques in Section \ref{s:comp} only depend on the continuity of $D$ and this is also the case for Lemma \ref{l:ab}, while  Proposition \ref{p:monot} simply involves the values of $\dot D$ in the open interval $ (0, \orho)$. As a consequence, we only need to focus on problem  \eqref{e:fo} and the equivalence discussed in Theorem \ref{t:semiequiv}.

\begin{proofof}{Theorem \ref{t:Dinf}} Fix $c\in \mathbb{R}$. The proof depends on the properties of $D$.

\smallskip
\noindent I. \emph{Assume condition {\rm ({\^D}${}_0$)}}. \, In this case it is possible to find real values $a_1, a_2$ and strictly positive numbers $b_1, b_2$ in such a way that, if we denote $D_i(\phi)=:a_i\phi+b_i$ for $i=1,2$, then $D_1(\phi)< D(\phi)<D_2(\phi)$ for $\phi\in [0, \orho]$. Problem \eqref{e:fo}, when replacing $D$ with $D_1$ and $D_2$, is uniquely solvable by Theorem \ref{t:fosemi}, because condition \D{0} holds for both $D_1$ and $D_2$. Let $z_1$ and $z_2$ be these solutions, respectively; see Figure \ref{f:manyzeta2}{\em (a)}. In particular, Theorem \ref{t:fosemi} implies $z_1(0)<0$. Notice that $z_1$ is a strict lower-solution and $z_2$ is a strict upper-solution of \eqref{e:zeq} on $[0, \orho)$; we claim that
\begin{equation}\label{e:cD12}
z_1(\phi) > z_2(\phi), \qquad \phi \in [0, \orho).
\end{equation}
Indeed, since $D_1<D_2$ in $[0, \orho]$ we deduce that $z_1$ is a strict lower-solution of
\begin{equation}\label{e:D2}
\dot z(\phi)=h(\phi)-c-\frac{D_2(\phi)g(\phi)}{z(\phi)}, \quad \phi \in [0, \orho).
\end{equation}
Let $\gamma(\phi)$ be the solution of \eqref{e:D2} satisfying $\gamma(0)=z_1(0)$ and assume that $\gamma$ is defined in $[0, \beta)$, with $\beta\le \orho$. By Lemma \ref{l:keyif}\emph{(1.i)} we have $\gamma(\phi)>z_1(\phi)$ for $\phi\in (0, \beta)$. Notice that
\begin{equation*}
\dot \gamma(\phi)-\dot z_1(\phi)=g(\phi)\left[ \frac{D_2(\phi)}{-\gamma(\phi)}-\frac{D_1(\phi)}{-z_1(\phi)}\right]>0, \quad \phi \in [0, \beta),
\end{equation*}
which makes impossible the case $\beta=\orho$. Hence $\beta<\orho$ and this implies $z_2(0)<z_1(0)$. Moreover, if there exists $\phi_0 \in (0, \orho)$ such that $z_1(\phi_0)=z_2(\phi_0)$, we deduce as above $z_2(\hat \phi)=0$ for some $\hat \phi <\orho$, i.e., a contradiction. Claim \eqref{e:cD12} is then proved. 

Since $z_1(\orho)=0$, we can find an increasing sequence $\{\psi_n\}\subset(0,\orho)$, which converges to $\orho$ and such that $\{z_1(\psi_n)\}$ is also increasing. Denote with $\zeta_n$ the solution of final-value problem
\begin{equation*}
\left\{
\begin{array}{l}
\dot z (\varphi)= h(\varphi)-c-\frac{D(\varphi)g(\varphi)}{z(\varphi)},\ \phi<\psi_n,\\
z(\psi_n)=z_1(\psi_n).
\end{array}
\right.
\end{equation*}
By means of Lemma \ref{l:iandfvp}{\sl (2)}, the solution $\zeta_n$ is unique and it is defined on $(0, \psi_n]$. Furthermore, the sequence $\left\{\zeta_n(\phi)\right\}_n$ is increasing for all $\phi$ and, by Lemma \ref{l:keyif}\emph{(2)}, it satisfies $z_2(\phi)< \zeta_n(\phi)<z_1(\phi)$ for $\phi \in (0, \psi_n)$. Since $\zeta_n$  is bounded away from $0$, we can extend it to $0$ by continuity. We define
\begin{equation*}
z(\phi)=\lim_{n \to \infty}\zeta_n(\phi), \qquad \phi \in [0, \orho).
\end{equation*}
As in the proof of Theorem \ref{t:fosemi}\emph{(b)}, we can prove that $z(\phi)$ is the required solution of problem \eqref{e:fo} with  $z(0)\le z_1(0)<0$.

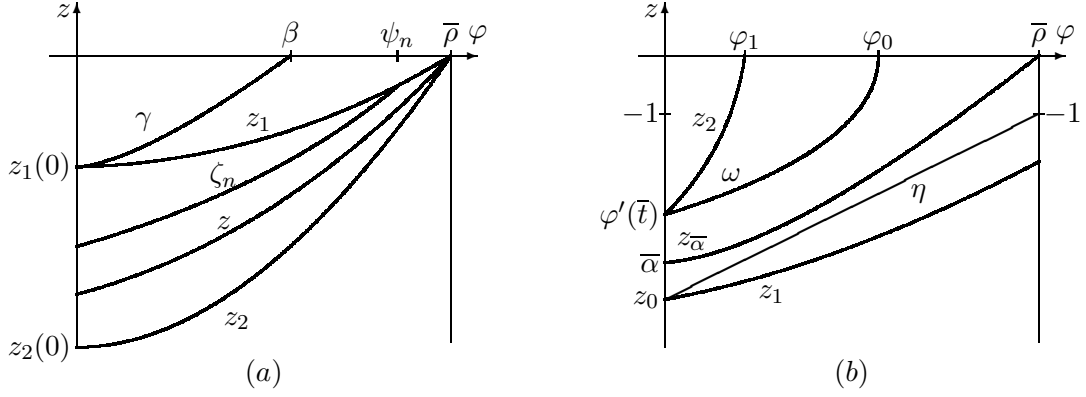
\begin{figure}[htbp]
\begin{picture}(100,140)(80,-60)
\setlength{\unitlength}{1pt}

\put(160,0){
\put(-40,60){
\put(0,0){\vector(1,0){160}}
\put(160,8){\makebox(0,0){$\varphi$}}
\put(10,-110){\vector(0,1){130}}
\put(5,17){\makebox(0,0){$z$}}
\put(150,2){\line(0,-1){110}}
\put(150,8){\makebox(0,0){$\overline{\rho}$}}
\put(90,3){\makebox(0,0)[b]{$\beta$}}
\put(90,-2){\line(0,1){4}}
\put(130,3){\makebox(0,0)[b]{$\psi_n$}}
\put(130,-2){\line(0,1){4}}
\put(8,-42){\makebox(0,0)[r]{$z_1(0)$}}
\put(8,-110){\makebox(0,0)[r]{$z_2(0)$}}
\put(0,0){\thicklines{\qbezier(10,-42)(35,-40)(90,0)}} 
\put(35,-25){\makebox(0,0){$\gamma$}}
\put(0,0){\thicklines{\qbezier(10,-42)(85,-40)(150,0)}} 
\put(78,-25){\makebox(0,0){$z_1$}}

\put(0,0){\thicklines{\qbezier(10,-72)(85,-50)(130,-11.5)}} 
\put(65,-45){\makebox(0,0){$\zeta_n$}}

\put(0,0){\thicklines{\qbezier(10,-90)(85,-70)(150,0)}} 
\put(65,-63){\makebox(0,0){$z$}}

\put(0,0){\thicklines{\qbezier(10,-110)(75,-110)(150,0)}} 
\put(70,-100){\makebox(0,0){$z_2$}}

\put(80,-120){\makebox(0,0){$(a)$}}
}

\put(180,60){
\put(0,0){\vector(1,0){160}}
\put(160,8){\makebox(0,0){$\varphi$}}
\put(10,-110){\vector(0,1){130}}
\put(5,17){\makebox(0,0){$z$}}
\put(150,2){\line(0,-1){110}}
\put(150,8){\makebox(0,0){$\overline{\rho}$}}
\put(90,9){\makebox(0,0)[t]{$\phi_0$}}
\put(90,-2){\line(0,1){4}}
\put(40,9){\makebox(0,0)[t]{$\phi_1$}}
\put(40,-2){\line(0,1){4}}
%
%
\put(8,-60){\makebox(0,0)[r]{$\phi'(\overline{t})$}}
\put(8,-78){\makebox(0,0)[r]{$\overline{\alpha}$}}
\put(8,-92){\makebox(0,0)[r]{$z_0$}}
%
%
\put(8,-22){\makebox(0,0)[r]{$-1$}}
\put(8,-22){\line(1,0){4}}
\put(152,-22){\makebox(0,0)[l]{$-1$}}
\put(148,-22){\line(1,0){4}}
%
\put(150,-22){\thicklines{\line(-2,-1){140}}}
\put(105,-52){\makebox(0,0){$\eta$}}

\put(0,0){\thicklines{\qbezier(10,-60)(35,-35)(40,0)}} 
\put(25,-25){\makebox(0,0){$z_2$}}

\put(0,0){\thicklines{\qbezier(10,-60)(90,-35)(90,0)}} 
\put(35,-45){\makebox(0,0){$\omega$}}

\put(0,0){\thicklines{\qbezier(10,-78)(55,-75)(150,0)}} 
\put(20,-70){\makebox(0,0){$z_{\overline{\alpha}}$}}

\put(0,0){\thicklines{\qbezier(10,-92)(75,-80)(150,-40)}} 
\put(50,-90){\makebox(0,0){$z_1$}}
\put(80,-120){\makebox(0,0){$(b)$}}
}
}
\end{picture}
\caption{\label{f:manyzeta2}{{\em (a)}: Case ({\^D}${}_0$). The lower-solution $z_1$, the upper-solution $z_2$ and the solution $z$. {\em (b):} Case ({\^D}${}_1$). The solutions $z_{\overline{\alpha}}$, $z_1$, $z_2$, the upper-solution $\eta$ and the lower-solution $\omega$; here, $z_0<-1$ satisfies \eqref{e:z0}.}}
\end{figure}

\medskip

\noindent II. \emph{Assume condition {\rm ({\^D}${}_1$)}}. \, The  proof splits into three parts.

\smallskip

\noindent{\emph{(a) Existence of a lower-solution.} }  We show that there exist $\phi_0\in (0, \orho)$ and a strict lower-solution $\omega \colon [0, \phi_0] \to \mathbb{R}$ for \eqref{e:zeq}, such that $\omega(\phi)<0$ for $\phi\in [0, \phi_0)$ and $\omega(\phi_0)=0$. This means that 
\begin{equation}\label{e:Dinftylower}
\dot \omega<h(\phi)-c-\frac{D(\phi)g(\phi)}{\omega(\phi)}, \quad \phi \in (0, \phi_0).
\end{equation}
Let $0<M<N$ be two constants such that $h(\phi)-c>-M$ for $\phi \in [0, \orho]$. By (g) and ({\^D}${}_1$) there is $\varepsilon >0$ such that $D(\phi)g(\phi)>\frac{N^2\phi}{4}$ for $\phi\in (0, \varepsilon]$.
Therefore we shall prove \eqref{e:Dinftylower} if we find $\phi_0$ and $\omega$ solving 
\begin{equation}\label{e:Dinftylowerl}
\dot \omega=-M-\frac{N^2\phi}{4\omega(\phi)}, \quad \phi \in (0, \phi_0).
\end{equation}
It is not easy to solve directly this equation; so, we exploit the second-order equation which corresponds to it, in the same way that \eqref{e:tws} corresponds to \eqref{e:zeq}. 

Consider the equation $u^{\prime \prime}+Mu^{\prime}+\frac{N^2}{4}u=0$
and the solution
\begin{equation*}
\phi(t)=\eps\mbox{e}^{-\frac{Mt}{2}}\left( \cos(\alpha t) +\frac{M}{2\alpha}\sin (\alpha t)\right), \qquad \alpha=\frac{\sqrt{N^2-M^2}}{2}.
\end{equation*}
We denote $\overline t=\frac{1}{\alpha}\left[ \mbox{arctg}(-\frac{2\alpha}{M})+\pi\right]$ and notice that
\begin{equation*}
\phi^{\prime}(t)=-\eps\mbox{e}^{-\frac{Mt}{2}}\left(\frac{M^2}{4\alpha}+\alpha \right)\sin (\alpha t), \quad t\in \mathbb{R}.
\end{equation*} 
We have that $\phi(0)=\eps$ and $\phi(\overline t)=0$, $\phi(t)$ is positive and decreasing in $ [0,\overline t)$, $\phi'(0)=0$. Hence, the function $\phi$ is invertible and we denote by $t=t(\phi)$, $\phi\in[0, \eps]$, its inverse function. If we define $\omega(\phi):=\phi^{\prime}\left(t(\phi)\right)$ for $\phi\in[0, \eps]$ and  $\phi_0:=\eps$, see Figure \ref{f:manyzeta2}{\em (b)}, then it is not difficult to show that $\omega (\phi)$ is a solution of \eqref{e:Dinftylowerl}. Our claim is proved.

\medskip

\noindent{\emph{(b) Solution of problem \eqref{e:fo}.} } Consider the linear function $\eta(\phi)$ defined in \eqref{e:etaline} with  $z_0 <\phi^{\prime}(\overline t)$, see Figure \ref{f:manyzeta2}. We showed in the proof of Theorem \ref{t:fosemi}, part \emph{(a)}, that $\eta(\phi)$ is a strict upper-solution of \eqref{e:zeq} in $[0, \orho]$; the proof does not depend on $\dot D(0)$. By Lemma \ref{l:keyif}\emph{(1.ii)}, the solution $z_1$ of the initial-value problem
\begin{equation*}
\left\{
\begin{array}{ll}
\dot z=h(\phi)-c-\frac{D(\phi)g(\phi)}{z(\phi)},\\
z(0)=\eta(0),
\end{array}
\right.
\end{equation*}
satisfies $z_1(\phi)<\eta(\phi)$ for $\phi \in (0, \orho)$. In particular $z_1(\orho) \le -1$. Similarly, by Lemma \ref{l:keyif}\emph{(1.i)}, the solution $z_2$
of the initial-value problem
\begin{equation*}
\left\{
\begin{array}{ll}
\dot z=h(\phi)-c-\frac{D(\phi)g(\phi)}{z(\phi)},\\
z(0)=\omega(0),
\end{array}
\right.
\end{equation*}
with $\omega(\phi)$ defined in step \emph{(a)}, satisfies $z_2(\phi)>\omega(\phi)$ for $\phi\in(0,\phi_1)$, where $[0,\phi_1)$ is the maximal-existence interval of $z_2$. This implies that $[0, \phi_1)\subset[0, c_1)$.  

Now, consider the family $z_{\alpha}$ of solutions of \eqref{e:zeq} with $z_{\alpha}(0)=\alpha$, for $\alpha \in [z_1(0), z_2(0)]$, and apply a shooting argument. It is not difficult to find $\overline \alpha \in \left(z_1(0), z_2(0)\right)$ such that the corresponding function $z_{\overline \alpha}$ is a solution of problem \eqref{e:fo}, hence with  $z_{\overline \alpha}(0)<z_2(0)<0$.

\medskip

\noindent{\emph{(c) Uniqueness.} } The reasoning in the proof of Theorem \ref{t:fosemi}\emph{(c)} applies also here.

\medskip

\noindent III. The equivalence between semi-wavefront solutions $\phi$ and solutions $z$ of \eqref{e:fo}  can be proved as in Theorem \ref{t:semiequiv}. In particular, with reference to that proof, in case {\rm ({\^D}${}_0$)} inequality \eqref{e:z(0)D0} still holds, while in case {\rm ({\^D}${}_1$)} we are in case {\em(a)} because $z(0)<0$ for every $c$. This proves \eqref{e:lastissimo}.
\end{proofof}

We also point out that the negative results of Section \ref{sec:pasting}, concerning the impossibility of pasting semi-wavefronts, still hold under ({\^D}${}$).
\section*{Acknowledgment}
We thank M.D. Rosini for drawing our attention on equation \eqref{e:E} as a model for crowd dynamics in \cite{BTTV}. The first author also thanks A. Bressan and A. Novikov for useful discussions on this paper.
The authors are members of the {\em Gruppo Nazionale per l'Analisi Matematica, la Probabilità e le loro Applicazioni (GNAMPA)} of the {\em Istituto Nazionale di Alta Matematica (INdAM)}. The first author was supported by the project {\em Balance Laws in the Modeling of Physical, Biological and Industrial Processes} of GNAMPA.


\end{document}